\newtheorem{Theorem}{Theorem}[section]
\newtheorem{Lemma}[Theorem]{Lemma}
\newtheorem{Corollary}[Theorem]{Corollary}
\theoremstyle{remark}
\newtheorem{Remark}[Theorem]{Remark}
\numberwithin{equation}{section}
\def\bar{\overline}
\def\sub{\subseteq}
\def\iso{\cong}
\def\into{\hookrightarrow}
\def\onto{\twoheadrightarrow}
\def\isoto{\overset{\sim}{\longrightarrow}}
\def\epsilon{\varepsilon}
\def\ad{\operatorname{ad}}
\def\Ad{\operatorname{Ad}}
\def\cdet{\operatorname{cdet}}
\def\col{\operatorname{col}}
\def\diag{\operatorname{diag}}
\def\gr{\operatorname{gr}}
\def\HC{{\operatorname{HC}}}
\def\Lie{\operatorname{Lie}}
\def\pr{\operatorname{pr}}
\def\row{\operatorname{row}}
\def\sspan{\operatorname{span}}
\def\sgn{\operatorname{sgn}}
\def\Tab{\operatorname{Tab}}
\def\tw{{\operatorname{tw}}}
\def\cns{\c_n(\sigma)}
\def\cnls{\c_{n,l}(\sigma)}
\def\Yns{Y_n(\sigma)}
\def\Ynps{Y_n^{[p]}(\sigma)}
\def\Ynls{Y_{n,l}(\sigma)}
\def\Ynlps{Y_{n,l}^{[p]}(\sigma)}
\def\C{{\mathbb C}}
\def\F{{\mathbb F}}
\def\K{{\mathbb K}}
\def\Z{{\mathbb Z}}
\def\GL{\mathrm{GL}}
\def\kk{{\mathbbm k}}
\def\ba{\text{\boldmath$a$}}
\def\bd{\text{\boldmath$d$}}
\def\bm{\text{\boldmath$m$}}
\def\bp{\text{\boldmath$p$}}
\def\bq{\text{\boldmath$q$}}
\def\bs{\text{\boldmath$s$}}
\def\bt{\text{\boldmath$t$}}
\def\bu{\text{\boldmath$u$}}
\def\bv{\text{\boldmath$v$}}
\def\bw{\text{\boldmath$w$}}
\def\bD{\text{\boldmath$D$}}
\def\bI{\text{\boldmath$I$}}
\def\bJ{\text{\boldmath$J$}}
\def\bi{\text{\boldmath$i$}}
\def\bj{\text{\boldmath$j$}}
\def\b{\mathfrak b}
\def\c{\mathfrak c}
\def\g{{\mathfrak g}}
\def\gl{\mathfrak{gl}}
\def\h{\mathfrak h}
\def\i{\mathfrak i}
\def\j{\mathfrak j}
\def\k{\mathfrak k}
\def\m{\mathfrak m}
\def\p{\mathfrak p}
\def\t{\mathfrak t}
\def\v{\mathfrak{v}}
\def\Sf{\mathfrak{S}}
\def\cF{\mathcal{F}}
\def\cM{\mathcal{M}}
\def\tc{\tilde{c}}
\def\te{\tilde{e}}
\def\tB{\widetilde{B}}
\def\dB{\dot{B}}
\def\dC{\dot{C}}
\def\dD{\dot{D}}
\def\dE{\dot{E}}
\def\dF{\dot{F}}
\def\dT{\dot{T}}
\def\dZ{\dot{Z}}
\newcommand{\arxiv}[1]{{\tt arXiv:#1}}
\title{\boldmath Restricted shifted Yangians and restricted finite $W$-algebras}
\author{Simon M.~Goodwin and Lewis Topley}
\address{School of Mathematics,
University of Birmingham,
Birmingham, B15 2TT,
UK}
\email{s.m.goodwin@bham.ac.uk}
\address{School of Mathematics, Statistics and Actuarial Science, University of Kent,
Canterbury, Kent CT2 7FS, UK}
\email{L.Topley@kent.ac.uk}
\thanks{2010 {\em Mathematics Subject Classification}: 17B10, 17B37.}
\begin{document}

\begin{abstract}
We study the truncated shifted Yangian $\Ynls$ over an algebraically
closed field $\kk$ of characteristic $p >0$, which is known to be
isomorphic to the finite $W$-algebra $U(\g,e)$
associated to a corresponding nilpotent element $e \in \g = \gl_N(\kk)$.
We obtain an explicit description of the centre of $\Ynls$,
showing that it is generated by its Harish-Chandra centre and its $p$-centre.
We define $Y_{n,l}^{[p]}(\sigma)$ to be the quotient
of $\Ynls$ by the ideal generated by the kernel of trivial character of its $p$-centre.
Our main theorem states that $Y_{n,l}^{[p]}(\sigma)$ is isomorphic to the restricted finite $W$-algebra $U^{[p]}(\g,e)$.
As a consequence we obtain an explicit presentation of this restricted $W$-algebra.
\end{abstract}

\maketitle

\section{Introduction}

Let $G$ be a reductive
algebraic group over an algebraically closed field $\kk$ of characteristic $p>0$, with Lie algebra $\g = \Lie G$.
The centre of $U(\g)$
admits a large {\em $p$-centre} $Z_p(\g)$ which is $G$-equivariantly isomorphic to the
coordinate ring of (the Frobenius twist of) $\g^*$.
For $\chi \in \g^*$
the reduced enveloping algebra $U_\chi(\g)$, is defined to be the
quotient of $U(\g)$ by the ideal generated by the maximal ideal of $Z_p(\g)$ corresponding to $\chi$.
The most important aspects of the representation theory of $\g$ are understood by studying $U_\chi(\g)$-modules,
and the early work of Kac--Weisfeiler, in \cite{KW}, shows that it suffices to consider the case $\chi$ nilpotent,
meaning $\chi$ identifies with a nilpotent element $e \in \g$ under some choice of $G$-equivariant isomorphism
$\g \cong \g^*$ (we assume the standard hypotheses).  We refer to \cite{JaLA} for a survey of this theory up to 2004,
and also to \cite{BM} for major developments based on deep connections with the geometry of Springer fibres.
In \cite{PrST} Premet made a significant breakthrough: he showed that any such
$U_\chi(\g)$ is Morita equivalent to a certain algebra $U^{[p]}(\g,e)$, now known as the
{\em restricted finite $W$-algebra}.

In this paper, we consider the case $G = \GL_N(\kk)$, so that $\g = \gl_N(\kk)$.
Our main theorem provides an explicit
presentation for the restricted finite $W$-algebra $U^{[p]}(\g,e)$.  This is achieved by exhibiting an
isomorphism with a restricted version of a truncated shifted Yangian, as stated in Theorem~\ref{T:restricted} below.

Before we proceed, we recall some relevant history. In \cite[Section~4]{PrST} Premet constructed
finite $W$-algebras over fields of characteristic zero, and since then these algebras have found many deep
applications to classical problems surrounding the
representations of complex semisimple Lie algebras; see \cite{PrMR} and \cite{Lo} for surveys
on this theory.

In \cite{BKshift}, Brundan--Kleshchev made a breakthrough by
providing a presentation of the complex finite $W$-algebra for the case $\g = \gl_N(\C)$ by
defining an explicit isomorphism with a certain quotient of a shifted Yangian.
This allowed them to make an extensive study of the representation theory of these finite $W$-algebras in
\cite{BKrep}.

Building on Premet's seminal work using the method of modular reduction of finite $W$-algebras, first
considered in \cite{PrPI} and exploited further in \cite{PrCQ},
the authors developed a direct approach to theory of finite $W$-algebras $U(\g,e)$ over $\kk$ in \cite{GTmod}.
Very briefly, for a choice of nilpotent $e\in \g$ corresponding to $\chi \in \g^*$,
the algebra $U(\g,e)$ is a filtered deformation of a good transverse slice
$\chi + \check\v$
to the
coadjoint orbit $G \cdot \chi$.
Further, $U(\g,e)$ admits a natural $p$-centre $Z_p(\g,e)$ isomorphic to
the coordinate algebra of (the Frobenius twist of) $\chi + \check\v$.
Then the restricted $W$-algebra $U^{[p]}(\g,e)$ is the quotient of $U(\g,e)$ by the ideal generated
by the ideal of $Z_p(\g,e)$
corresponding to $\chi$.

In joint work with Brundan \cite{BT} the second author developed the theory of shifted Yangians
$\Yns$ over $\kk$. One of the key features which differs from characteristic zero is the existence
of a large central subalgebra $Z_p(\Yns)$, called the {\em $p$-centre}, which is
constructed using some very natural power series formulas.

In subsequent work \cite{GTmin}, the authors showed that Brundan--Kleshchev's isomorphism
descends to positive characteristic. To explain this, we require a little notation, and from now
on we take $\g = \gl_N(\kk)$.  To each nilpotent element $e \in \g$ with Jordan type $\bp = (p_1 \le \cdots \le p_n)$,
we may associate a choice of shift matrix $\sigma = (s_{i,j})_{1\leq i,j\leq n}$, and thus a shifted Yangian
$\Yns$, which is a subalgebra of the Yangian $Y_n$.  The beautiful formulas introduced in \cite{BKshift} lead to a surjective
algebra homomorphism $\tilde\phi : \Yns \to U(\g,e)$.  Unsurprisingly the kernel of $\tilde \phi$ has the
same description as in characteristic zero, and so there is a natural isomorphism
\begin{equation*}
\phi : \Ynls \isoto U(\g,e),
\end{equation*}
where $\Ynls$ is the truncated shifted Yangian of level $l$, first
defined over the complex numbers in \cite[Section~6]{BKshift}.

Making use of the explicit presentation of $U(\g,e)$ obtained through the isomorphism $\phi$, it was proved in
\cite{GTmin} that every $U_\chi(\g)$-module of minimal dimension is parabolically induced.
This result is a modular analogue of M{\oe}glin's famous theorem on completely prime primitive ideals, see \cite{Moe},
and some of our methods adapt those in the proof given by Brundan in \cite{BrM}.

In this paper we define the $p$-centre $Z_p(\Ynls)$ of $\Ynls$ to be the image of $Z_p(\Yns)$ under the natural map $\Yns \onto \Ynls$,
and this leads to a {\em restricted truncated shifted Yangian}
$\Ynlps$ by taking the quotient of $\Ynls$ by the ideal generated by the natural generators of $Z_p(\Ynls)$.

We emphasise here that the origin of $Z_p(\Yns)$ is totally distinct from the construction of $Z_p(\g,e)$.
Nevertheless, our main theorem states that the isomorphism $\phi$ factors through the restricted
quotients.

\begin{Theorem} \label{T:restricted}
The isomorphism $\phi : \Ynls \isoto U(\g,e)$ factors to an isomorphism
$$
\phi^{[p]} : \Ynlps \isoto U^{[p]}(\g,e).
$$
\end{Theorem}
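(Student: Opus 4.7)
The plan is to show that $\phi$ sends the defining ideal of $\Ynlps$ onto the defining ideal of $U^{[p]}(\g,e)$. Since $\phi$ is already an isomorphism, this reduces to proving two things: that $\phi$ restricts to an isomorphism $Z_p(\Ynls) \isoto Z_p(\g,e)$, and that under this identification the trivial character on the source matches the character $\chi$ on the target.

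First, I would invoke the structural description of $Z(\Ynls)$ announced in the abstract (that it is generated by the Harish-Chandra centre together with the $p$-centre). Transporting across $\phi$ gives a parallel description of $Z(U(\g,e))$, into which the intrinsically-defined subalgebra $Z_p(\g,e)$ coming from \cite{GTmod} fits naturally. To compare $\phi(Z_p(\Ynls))$ with $Z_p(\g,e)$, I would pass to the Kazhdan-filtered picture: $\phi$ is filtered and induces an isomorphism $\gr \Ynls \isoto \gr U(\g,e) \iso \kk[\chi + \check\v]$. The canonical generators of $Z_p(\Yns)$ constructed in \cite{BT} have leading terms equal to $p$-th powers of PBW generators of $\gr \Yns$; quotienting to $\Ynls$, the same holds in $\gr \Ynls$. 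On the other side, $Z_p(\g,e)$ is the coordinate ring of the Frobenius twist of $\chi+\check\v$ and its augmentation ideal has associated graded generated by $p$-th powers of coordinate functions. Thus a direct leading-term comparison, using the explicit formulas for $\tilde\phi$, shows that $\phi$ maps each $p$-central generator on the Yangian side into $Z_p(\g,e)$ modulo lower-order terms. A standard Nakayama-style induction along the Kazhdan filtration then upgrades this to the honest inclusion $\phi(Z_p(\Ynls)) \sub Z_p(\g,e)$, and equality follows from a rank count (both sides have the same Krull dimension, equal to $\dim(\chi+\check\v)$).

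Finally, I would match the characters. The kernel of the trivial character on $Z_p(\Ynls)$ is generated by the images of the canonical $p$-central generators themselves (these have zero constant term by construction), and the corresponding generators of $Z_p(\g,e)$ described in \cite{GTmod} are precisely those vanishing at the point $\chi \in \chi + \check\v$. Since these elements correspond to one another under $\phi$ by Step~2, the two characters agree, and so $\phi$ descends to the claimed isomorphism $\phi^{[p]}$. The main obstacle is the leading-term computation in the second paragraph: the two $p$-centres arise from genuinely different constructions — power-series formulas inside $\Yns$ on one hand, and coordinates on the Frobenius twist of a transverse slice on the other — so the bulk of the work lies in tracking the leading terms of the canonical $p$-central generators through the explicit Brundan--Kleshchev formulas defining $\tilde\phi$, and verifying compatibility with the PBW structure on $U(\g,e)$.
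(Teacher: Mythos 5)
Your strategy rests on first proving that $\phi$ restricts to an isomorphism $Z_p(\Ynls) \isoto Z_p(\g,e)$, and this is precisely where the proof breaks down. The paper explicitly flags this as an open problem: the authors remark at the end of the introduction that their proof does \emph{not} establish $\phi(Z_p(\Ynls)) = Z_p(\g,e)$, and that whether these two $p$-centres line up under $\phi$ is not known. The specific gap in your second paragraph is the step ``a standard Nakayama-style induction along the Kazhdan filtration then upgrades this to the honest inclusion.'' Equality of associated graded subalgebras is strictly weaker than equality of the subalgebras themselves. It is true (and proved in the paper, via Theorem~\ref{T:centreY}(c) and Lemma~\ref{L:ZpUgegens}) that $\gr Z_p(\Ynls)$ and $\gr Z_p(\g,e)$ both identify with $Z_p(\g^e)$ under the filtered isomorphism $\gr\phi$. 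But the downward induction you describe does not close: if $\phi(z)$ has the same leading term as some $z' \in Z_p(\g,e)$, the difference $\phi(z) - z'$ has lower filtration degree, yet it is neither in $\phi(Z_p(\Ynls))$ nor in $Z_p(\g,e)$ a priori — you cannot continue the descent. Two filtered subalgebras can share an associated graded while being genuinely distinct (e.g.\ $\kk[x^2]$ versus $\kk[x^2+x]$ inside $\kk[x]$). The ``rank count'' at the end of your second paragraph has the same problem: equality of Krull dimension does not force equality of subalgebras.

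What the paper does instead is prove a deliberately weaker containment that is still enough. It shows (Lemma~\ref{L:EsandFs}) that the canonical generators of $Z_p(\Ynls)_+$ are sent by $\phi$ into the ideal $Z_p(\g,e)_+ Z(\g,e)$, not into $Z_p(\g,e)$ itself. For the $E$- and $F$-type generators this follows from the $T_n$-weight structure: they carry nonzero weight, and weight considerations force them into the augmentation ideal. For the $B$-type generators the argument is more delicate: one compares the action of $\hat B_i^{(rp)}$ and of $\xi_\p(\gr' D_i^{(r)})$ on \emph{all} highest weight vectors (Lemmas~\ref{L:Birpaction} and \ref{L:DonUh}), shows they agree, and then uses Corollary~\ref{C:ZU0inj} (injectivity of a Harish-Chandra-type restriction on the zero-weight part of the centre) to deduce the difference lies in $Z_p(\g,e)_+ Z(\g,e)$. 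This containment suffices to conclude $\phi(\Ynls Z_p(\Ynls)_+) \subseteq U(\g,e) Z_p(\g,e)_+$, giving a surjection $\phi^{[p]}$, which is then an isomorphism by a dimension count of the restricted quotients (both have dimension $p^{\dim\g^e}$). So the highest weight theory developed in Section~\ref{S:hwY} is not an optional ingredient but the essential replacement for the filtration argument that fails.
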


Since $\Ynlps$ is defined by generators and relations, the above theorem provides an explicit presentation for $U^{[p]}(\g,e)$.

The main ingredients of the proof are a detailed study of the centres of $\Ynls$ and $U(\g,e)$ together
with an analysis of highest weight modules for both algebras.  We emphasise that Theorems \ref{T:centreY}
and \ref{T:centreW} are significant results in their own right, describing the structures of the centres of $\Ynls$ and $U(\g,e)$
explicitly.  Furthermore, we expect the development of highest weight modules in Section~\ref{S:hwY} will
play an important role in future work.

Below we give an outline of the paper, in which
we point out the most important steps.

In Section \ref{S:prelims}, we recall some relevant preliminaries, and introduce the combinatorial
notation that we require.
There are new results in \S\ref{ss:Znilpcent}, where we consider the centre $Z(\g^e)$ of the universal enveloping
algebra of the centralizer of $e$.  In particular, we use \cite{BB} to give precise formulas for the generators of $Z(\g^e)$,
sharpening the main results of \cite{To}.  Also in \S\ref{ss:truncations}, we observe that $\g^e$ is isomorphic
to a truncated shifted current Lie algebra, which is useful later in this paper.

In Section \ref{S:YandW},
we recall the structural features of the shifted Yangian $\Yns$ and the finite $W$-algebra $U(\g,e)$,
drawing on \cite{BT}, \cite{GTmod} and \cite{GTmin}.
The key tools introduced here are the various filtrations on these algebras, and a precise description of their associated graded algebras.
We also recall the definition of the map $\tilde \phi$ lying at the core of our main theorem.
In \S\ref{ss:truncate} we introduce the truncation $\Ynls$ at level $l$, and use the shifted current algebra to simplify the
proof of the PBW theorem for $\Ynls$, see Theorem~\ref{T:isoandPBW}.
The main benefit of this slight simplification is that we may then apply the same argument to the integral forms of the Yangian and truncated shifted Yangian $Y_n^\Z(\sigma)$ and $Y_{n,l}^\Z(\sigma)$.
These integral forms, introduced in \S\ref{ss:Zforms}, are useful tools in some of our later proofs as they allow us to reduce modulo $p$ certain formulas from the characteristic zero case, see Corollary~\ref{C:redmodp}.
We expect these forms to find some independent interest, beyond the purposes of the present article.

Section~\ref{S:cent} is devoted to describing the centres of $\Ynls$ and $U(\g,e)$.
Our results are perfect analogues of Veldkamp's classical description of the centre $Z(\g)$
of $U(\g)$; see for example \cite[Theorem~3.5]{BG} and the references there.
We give definitions of the Harish-Chandra centres of $\Ynls$
and $U(\g,e)$; these are denoted by $Z_\HC(\Ynls)$ and $Z_\HC(\g,e)$,
and they are defined so that they ``lift'' the centre in characteristic zero.
The $p$-centres $Z_p(\Ynls)$ and $Z_p(U(\g,e))$ of $\Ynls$ and $U(\g,e)$ are also introduced here.
In Theorem~\ref{T:centreY} we give a detailed description of the centre of $\Ynls$, in particular
showing that is generated by
$Z_p(\Ynls)$ and $Z_\HC(\Ynls)$.
The next significant result is Theorem~\ref{T:centreW} in which we deduce
an analogous result for the centre $Z(\g,e)$ of $U(\g,e)$.
We mention that in recent work, Shu--Zeng have stated a more general result about the centre of modular finite $W$-algebras associated to arbitrary connected reductive groups, under
certain hypotheses, see \cite[Theorem 1]{SZ}. The more detailed
description we give here
is a necessary step in the proof of our main theorem,
and will play an role in future work.
A precise description of a set of generators for
$Z_p(\g,e)$ is given in \S\ref{ss:centreW}, and this is important in the sequel.
We also draw attention to Corollary~\ref{C:HClineup} which shows that $\phi$ preserves the
Harish-Chandra centres.
In \S\ref{ss:restrictedshifted} and \S\ref{ss:restrictedW} we discuss the
restricted quotients $\Ynlps$ and $U^{[p]}(\g,e)$ and their PBW bases.

In Section~\ref{S:hwY} we develop some highest weight theory for $\Ynls$ and
study the action of $U(\g,e)$ on highest weight modules through the Miura map.
One of the key ingredients of this theory is the use of a certain torus acting by automorphisms on both algebras, which is explained in detail in \S\ref{ss:torusactions}.
The key results after that are Lemmas~\ref{L:Birpaction} and \ref{L:DonUh}(c) which describe how the generators
of the $p$-centres $Z_p(\Ynls)$ and $Z_p(\g,e)$ act on highest weight modules.
Other important results for us are Corollaries~\ref{C:ZY0inj} and \ref{C:ZU0inj},
which concern analogues of Harish-Chandra homomorphisms for $\Ynls$ and $U(\g,e)$.

Finally, in Section~\ref{S:mainproof}, we combine our results to observe that the natural generators of $\phi(Z_p(\Ynls))$ act on highest weight vectors in precisely the same manner as the generators for $Z_p(\g,e)$. Using the Harish-Chandra restriction homomorphisms in $U(\g,e)$ we deduce that the ideal of $\Ynls$ generated by the kernel of the trivial character of $Z_p(\Ynls)$ is mapped to 
the ideal of $U(\g,e)$ generated by the kernel of the trivial character
of $Z_p(\g,e)$, and the main theorem follows quickly.

We remark that our proof does not show that $\phi : Z_p(\Ynls) \to Z_p(\g,e)$, and so it remains an interesting open problem to decide if these centres really do line up.

\subsection*{Acknowledgments}
The first author is supported by EPSRC grant EP/R018952/1, and the second author is
supported by EPSRC grant EP/N034449/1. We thank R.~Tange
for providing the idea for part of the proof of Corollary \ref{C:ZY0inj}.  We are also
grateful to J.~Brundan for helpful comments.

\section{Preliminaries and recollection} \label{S:prelims}
Throughout this paper, let $p \in \Z_{\ge 1}$ be a prime number, let $\F_p$ be the field of $p$ elements
and let $\kk$ be an algebraically closed field of characteristic $p$.

\subsection{A useful identity}
We require a standard identity in the polynomial ring $\kk[t]$ for the proof of Lemma~\ref{L:Birpaction},
and we recall it here.
Each $x \in \F_p$ satisfies $x^p - x = 0$, so for an indeterminate $t$, we
deduce that
\begin{equation} \label{e:Fppoly}
\prod_{j=0}^{p-1} (t-j) = t^p - t
\end{equation}
in $\F_p[t]$.  More generally, for any $a \in \kk$, we have the following equality in in $\kk[t]$
\begin{equation} \label{e:Fppolya}
\prod_{j=0}^{p-1} (t-a-j) = (t-a)^p - t-a = t^p - t  - (a^p - a).
\end{equation}

Observe that for $1 \le r \le p$ the coefficient of $t^{p-r}$ in the left hand side of \eqref{e:Fppoly} is $(-1)^{r} e_r(0,1,\dots,p-1)$, where $e_r(t_1,\dots,t_p)$ denotes the $r$th elementary symmetric polynomial in indeterminates $t_1,\dots,t_p$.
It follows that $e_r(0,1,\dots,p-1) = 0$ in $\F_p$ for $r = 1,\dots,p-2$; this gives a short alternative proof of \cite[Lemma 2.7]{BT}.

\subsection{Some standard results on algebras and modules}
We require a few elementary results from commutative and non-commutative algebra, which we state and prove for the reader's convenience.
The first lemma is well-known.
Let $A$ be a commutative $\kk$-algebra and $B, C \sub A$ subalgebras.
If $A$ is generated by $B \cup C$ then it follows that
there is a surjective homomorphism
$\phi : B \otimes_{B \cap C} C \onto A$.

\begin{Lemma}
\label{L:tensorprodpresentation}
Suppose that there exist elements $c_1,\dots,c_m \in C$ such that:
\begin{enumerate}
\item[(a)] the $B$-module generated by $c_1,\dots,c_m$ is free on $c_1,\dots,c_m$; and
\item[(b)] $C$ is generated by $c_1,\dots,c_m$ as a $B \cap C$-module.
\end{enumerate}
Then $\phi : B \otimes_{B \cap C} C \isoto A$ is an isomorphism.
\end{Lemma}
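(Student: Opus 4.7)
\medskip

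The plan is to verify surjectivity (which is automatic) and then establish injectivity by reducing every element of the tensor product to a normal form and applying the freeness hypothesis (a).

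First I would note that surjectivity of $\phi$ is built into the hypothesis: since $A$ is generated as a $\kk$-algebra by $B \cup C$, every element of $A$ is a $\kk$-linear combination of products $bc$ with $b \in B$ and $c \in C$, which clearly lies in the image of $\phi$.

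Next I would argue that condition (b) allows every element of $B \otimes_{B \cap C} C$ to be written in the normal form
\[
\sum_{i=1}^{m} b_i \otimes c_i \qquad (b_i \in B).
\]
Indeed, an arbitrary element has the shape $\sum_j b_j \otimes c'_j$ with $b_j \in B$ and $c'_j \in C$. By (b), we can write $c'_j = \sum_i d_{ij} c_i$ with $d_{ij} \in B \cap C$, and then use the defining relation $b_j \otimes d_{ij} c_i = b_j d_{ij} \otimes c_i$ of the tensor product over $B \cap C$ to pull each $d_{ij}$ across. Collecting terms, the element takes the claimed form with $b_i = \sum_j b_j d_{ij} \in B$.

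Finally, to establish injectivity, suppose $\phi\bigl(\sum_{i=1}^m b_i \otimes c_i\bigr) = 0$, which means $\sum_{i=1}^m b_i c_i = 0$ in $A$. By hypothesis (a) the elements $c_1,\dots,c_m$ generate a free $B$-module, so each $b_i = 0$, hence $\sum_i b_i \otimes c_i = 0$ in $B \otimes_{B \cap C} C$.

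I do not expect any serious obstacle here: the only subtle point is the careful use of the tensor relation over $B \cap C$ in the reduction to normal form, and this is exactly where the hypothesis that the $d_{ij}$ lie in $B \cap C$ (rather than merely in $C$) is essential.
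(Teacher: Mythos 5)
Your proof is correct and follows essentially the same route as the paper's: surjectivity is noted to be automatic, hypothesis (b) is used to reduce every tensor to the normal form $\sum_{i=1}^m b_i \otimes c_i$ with $b_i \in B$, and hypothesis (a) then forces each $b_i = 0$ when the image vanishes. The only difference is that you spell out the elementary tensor-relation calculation that the paper leaves implicit.
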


\begin{proof}
We just have to prove that $\phi$ is injective, so we let $y \in \ker \phi$.
It follows from (b) that
$y = \sum_{i=1}^m b_i \otimes c_i$, for some $b_i \in B$.  Then we have
$0 =\phi(y) = \sum_{i=1}^m b_ic_i$,
and this implies $b_i = 0$ for all $i = 1,\dots,m$ by (a), so that $y = 0$.
\end{proof}

The next result concerns free modules for a commutative $\kk$-algebra $A$.
It is well-known that
a surjective endomorphism of a finitely generated $A$-module is an isomorphism; this
can be proved using Nakayama's lemma, see for example \cite[Theorem 2.4]{Ma}.

\begin{Lemma} \label{L:freebasis}
Let $M$ be a free $A$-module of rank $n$ and let $m_1,\dots,m_n \in M$.
Suppose that $M$ is generated by $m_1,\dots,m_n$ as an $A$-module.  Then
$M$ is free on $m_1,\dots,m_n$.
\end{Lemma}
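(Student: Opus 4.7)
The plan is to reduce the statement to the fact, recalled just before the lemma, that any surjective endomorphism of a finitely generated module over a commutative ring is an isomorphism.

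First, I would fix an isomorphism $M \iso A^n$ (using that $M$ is free of rank $n$) and define the $A$-module homomorphism
\[
\phi : A^n \to M, \qquad (a_1,\dots,a_n) \mapsto \sum_{i=1}^n a_i m_i.
\]
By hypothesis (b) of the setup (i.e.\ that $m_1,\dots,m_n$ generate $M$), the map $\phi$ is surjective. Composing $\phi$ with the chosen isomorphism $M \iso A^n$ then gives a surjective $A$-module endomorphism of the finitely generated $A$-module $A^n$.

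The cited consequence of Nakayama's lemma immediately implies this endomorphism is an isomorphism, so $\phi$ itself is an isomorphism. This exactly says that $m_1,\dots,m_n$ form a free basis of $M$.

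There is no real obstacle here; the only thing to be careful about is that the result about surjective endomorphisms being injective genuinely requires the module to be finitely generated over a \emph{commutative} ring, and both of these are in place (the rank $n$ is finite and $A$ is a commutative $\kk$-algebra). Everything else is a direct application.
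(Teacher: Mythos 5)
Your argument is correct and is essentially the same as the paper's: both define a map sending a fixed free basis to $m_1,\dots,m_n$ (yours via $A^n$, the paper's directly as an endomorphism $\theta:M\to M$), observe it is surjective, and invoke the Nakayama consequence that a surjective endomorphism of a finitely generated module over a commutative ring is an isomorphism. The presentation differs only cosmetically.
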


\begin{proof}
Let $x_1,\dots,x_n \in M$ be free generators of $M$ as an $A$-module.  Consider the endomorphism $\theta : M \to M$
defined by $\theta(x_i) = m_i$.  Since $M$ is generated by $m_1,\dots,m_n$, we have that $\theta$ is surjective, and
thus an isomorphism.  Hence, $M$ is free on $m_1,\dots,m_n$.
\end{proof}

The final result in this subsection
is required several times in the sequel, and included for convenience of reference.
Let $A$ be a non-negatively filtered (not necessarily commutative) $\kk$-algebra with filtered pieces $\cF_i A$
for $i \in \Z_{\ge 0}$.  Also let $M$ be a non-negatively filtered $A$-module with filtered pieces $\cF_i M$ for $i \in \Z_{\ge 0}$.
We write $\gr A$ for the associated graded algebra of $A$ and $\gr M$ for the associated graded module of $M$.
If $m \in \cF_i M$ then the notation $\gr_i m := m + \cF_{i-1} M\in \gr M$ will be used throughout the paper.
The following lemma can be proved with a standard filtration argument.

\begin{Lemma}
\label{L:filtfree}
Suppose that $\gr M$ is free as a graded $\gr A$-module with homogeneous basis $\{\gr_{d_i} m_i \mid i \in I\}$, where $I$ is some index
set, $d_i \in \Z_{\ge 0}$ and $m_i \in \cF_{d_i}M$.  Then $M$ is a free $A$-module with basis $\{m_i \mid i \in I\}$.
\end{Lemma}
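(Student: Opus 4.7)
The plan is the standard two-step filtration argument: first show that the $m_i$ span $M$ over $A$, then show that they are $A$-linearly independent. Throughout I will use that the filtration on $M$ is non-negative, so an induction downwards on the filtration degree must terminate.

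For spanning, I would pick $m \in M$ and let $k \in \Z_{\ge 0}$ be minimal with $m \in \cF_k M$; then induct on $k$. The element $\gr_k m \in \gr M$ is a finite $\gr A$-linear combination of basis elements $\gr_{d_i} m_i$, and since $\gr A$ is graded we may take each contributing term to be homogeneous of total degree $k$. Lifting each homogeneous coefficient of degree $k - d_i$ in $\gr A$ to an element of $\cF_{k - d_i}A$ produces $a_i \in \cF_{k-d_i} A$ with $m - \sum_i a_i m_i \in \cF_{k-1} M$. The inductive hypothesis (with the base case $k = 0$ handled by the $d_i = 0$ basis elements) then expresses this difference as an $A$-linear combination of the $m_i$, and we are done.

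For linear independence, suppose $\sum_{i \in J} a_i m_i = 0$ for a finite subset $J \sub I$ and nonzero $a_i \in A$. Let $e_i \in \Z_{\ge 0}$ be the filtration degree of $a_i$, so that $a_i m_i \in \cF_{e_i + d_i} M$, and set $k := \max_{i \in J}(e_i + d_i)$. Let $J' \sub J$ be the set of indices achieving this maximum. Passing to the associated graded, we obtain the identity
\[
\sum_{i \in J'} \gr_{e_i} a_i \cdot \gr_{d_i} m_i = 0
\]
in the degree $k$ component of $\gr M$. By freeness of $\gr M$ on the basis $\{\gr_{d_i} m_i\}$, each $\gr_{e_i} a_i = 0$, contradicting $e_i$ being the filtration degree of $a_i$.

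The only mild subtlety is making sure the spanning induction really terminates, which is where the hypothesis that the filtration is non-negative gets used; otherwise, the argument is entirely routine.
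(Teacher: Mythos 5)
Your proof is correct and is precisely the standard filtration argument that the paper alludes to; the paper states this lemma without a written proof, remarking only that it ``can be proved with a standard filtration argument.'' Both steps---spanning via downward induction on filtration degree, terminating because $\cF_{-1}M = 0$, and linear independence via passing to the top nonzero filtered degree of a hypothetical relation and invoking freeness of $\gr M$---are carried out correctly.
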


\subsection{Algebraic groups and restricted Lie algebras} \label{ss:alggps}
We introduce some standard notation for algebraic groups and their Lie algebras,
which is used in the sequel.  Let $H$ be a linear algebraic group over $\kk$, and let $\h = \Lie H$ be the
Lie algebra of $H$.  We write $U(\h)$ for the universal enveloping algebra of $\h$, and
$Z(\h)$ for the centre of $U(\h)$.  We denote the $i$th filtered piece of $U(\h)$ in the standard PBW filtration
by $F_i U(\h)$.  The associated graded algebra $\gr U(\h)$ is identified with $S(\h)$, the symmetric algebra of $\h$.

The adjoint action of $H$ on $\h$ extends
to an action on $U(\h)$.
Also
$S(\h)$ has adjoint actions of $H$ and $\h$.  We use the
standard notation $(h,u) \mapsto \Ad(h)u$ and $(x,u) \mapsto
\ad(x)u$ for these actions, where $h \in H$, $x \in \h$, and $u \in U(\h)$ or $u \in S(\h)$.
For a closed subgroup $K$
of $H$ and $K$-stable subspace $A$ of $U(\h)$ or of $S(\h)$, we write $A^K$
for the invariants of $K$ in $A$ and $A^\k$ for the invariants of $\k$ in $A$.
Given $x \in \h$, we write $\h^x$ for the centralizer
of $x$ in $\h$,
and we write $H^x$ for the centralizer of $x$ in $H$.

We have that $\h$ is a restricted Lie algebra
and we write $x \mapsto x^{[p]}$ for the $p$-power map.  The $p$-centre of
$U(\h)$ is the subalgebra $Z_p(\h)$ of $Z(\h)$ generated by $\{x^p - x^{[p]} \mid x \in \h\}$.
There is a $H$-equivariant isomorphism $\xi = \xi_\h : S(\h)^{(1)} \to Z_p(\h)$,
determined by $\xi_\h(x) = x^p-x^{[p]}$ for $x \in \h$;
here $S(\h)^{(1)}$ denotes the Frobenius twist of $S(\h)$.

\subsection{Combinatorial notation} \label{ss:comb}

We require various pieces of combinatorial notation, which we set out below.

By a {\em composition} we simply mean a sequence $\bq = (q_1,q_2,\dots)$,
where $q_i \in \Z_{\ge 0}$ and only finitely many are nonzero.  When $\bq$
is a composition, $l \in \Z_{\ge 0}$ and $q_i = 0$ for all $i > l$, we
write $\bq = (q_1,\dots,q_l)$.  Given a composition $\bq$, we
define $|\bq| = \sum_{i \ge 1} q_i$ and say that $\bq$ is a composition
of $|\bq|$.  Also we define $\ell(\bq) = |\{i \in \Z_{\ge 1} \mid q_i > 0\}|$.
In this paper a composition $\bp$ is called a {\em partition}
if $0 < p_i \le p_{i+1}$ for all $1 \le i < \ell(\bp)$.
Given two compositions $\bm$ and $\bp$, we say that $\bm$ is a {\em subcomposition}
of $\bp$ if $m_i \le p_i$ for all $i \in \Z_{\ge 1}$, and in this case we write $\bm \sub \bp$.

Let $n \in \Z_{\ge 0}$.  By a {\em shift matrix} of size $n$ we mean
a $n \times n$ matrix $\sigma = (s_{i,j})$ with entries in $\Z_{\ge 0}$
such that $s_{i,j} = s_{i,k} + s_{k,j}$ whenever $i \le k \le j$, or $i \ge k \ge j$.
We note that this implies that $s_{i,i} = 0$ for all $i$, and that $\sigma$ is completely
determined by the entries $s_{i,i+1}$ and $s_{i+1,i}$ for $i=1,\dots,n-1$.

Let $N \in \Z_{\ge 0}$ and let $\bq = (q_1,\dots,q_l)$ be a composition of $N$ such that
for some $j$ we have $0< q_1 \le \dots \le q_j \ge \dots \ge q_l > 0$, and let $n := q_j = \max_i q_i$.
We define the pyramid $\pi = \pi(\bq)$
to be the diagram made up of $N$ boxes stacked in columns of heights $q_1,\dots,q_l$.
We let $\bp = \bp(\bq)$ be the partition of $N$ giving the row lengths of $\pi$ from top to bottom;
note that the number $p_n = l$ will often be referred to as {\em the level}.
The boxes in $\pi$ are labelled with $1,\dots,N$ along rows from left to right and from top to bottom.
The columns of $\pi$ are labelled $1,2,\dots,l$ from left to right and the rows are
labelled $1,2,\dots,n$ from top to bottom.
The box in $\pi$ containing $i$ is referred to as the $i$th box, and we write
$\row(i)$ and $\col(i)$ for the row and column of the $i$th box respectively.
We define the shift matrix $\sigma = \sigma(\bq)$ from $\pi$ by setting $s_{j,i}$ to be
the left indentation of the $i$th row of $\pi$ relative to the $j$th row, and $s_{i,j}$
to be the right indentation of the $i$th row of $\pi$ relative to the $j$th row, for $1 \le i \le j \le n$.

As an example we consider $\bq = (1,3,3,2,1)$.  The pyramid is
\begin{equation*}
\pi = \begin{array}{c}
\begin{picture}(60,36)
\put(0,0){\line(1,0){60}}
\put(0,12){\line(1,0){60}} \put(12,24){\line(1,0){36}}
\put(12,36){\line(1,0){24}}
\put(0,0){\line(0,1){12}} \put(12,0){\line(0,1){36}}
\put(24,0){\line(0,1){36}} \put(36,0){\line(0,1){36}}
\put(48,0){\line(0,1){24}}
\put(60,0){\line(0,1){12}}
\put(15,26.5){\hbox{1}}
\put(27,26.5){\hbox{2}}
\put(15,14.5){\hbox{3}}
\put(27,14.5){\hbox{4}}
\put(39,14.5){\hbox{5}}
\put(3,2.5){\hbox{6}}
\put(15,2.5){\hbox{7}}
\put(27,2.5){\hbox{8}}
\put(39,2.5){\hbox{9}}
\put(47.5,2.5){\hbox{10}}
\end{picture}
\end{array}.
\end{equation*}
Then we obtain the partition $\bp = (2,3,5)$, and the shift matrix
\begin{equation*}
\label{e:shiftexmp}
\sigma = \left(\begin{array}{ccc} 0 & 1 & 2  \\ 0 & 0 & 1 \\
1 & 1 & 0 \end{array}\right).
\end{equation*}

Evidently the data encoded in the composition $\bq$ is equivalent to the data given by the pyramid $\pi$.
We have explained how to construct a shift matrix and a level $(\sigma,l)$ from a pyramid.  To complete
the picture we observe that we can build the pyramid $\pi$ from knowledge of $(\sigma,l)$, by starting with
a bottom row of length $l$, and indenting the higher rows according to $\sigma$.
The partition $\bp$ can be explicitly recovered from $(\sigma, l)$ by the rule
\begin{equation}
\label{e:sigmalgivesp}
p_i := l - s_{i,n} - s_{n,i}
\end{equation}
Therefore, the combinatorial data $\bq$, $\pi$ and $(\sigma, l)$ are all equivalent.

Let $\pi = \pi(\bq)$ be a pyramid.
A {\em $\pi$-tableau} is a diagram obtained by filling the boxes of $\pi$ with elements of $\kk$.
The set of all tableaux of shape $\pi$ is denoted $\Tab_\kk(\pi)$.
For $A \in \Tab_\kk(\pi)$, we write $a_i$ for the entry in the $i$th box of $A$; alternatively
we sometimes write $a_{i,1},\dots,a_{i,p_i}$ for
the entries in the $i$th row of $A$ from left to right.
Two $\pi$-tableaux are called {\em row-equivalent} if one can be obtained from the other
by permuting the entries in the rows.

\subsection{Nilpotent elements in $\gl_N(\kk)$ and their centralizers}
\label{ss:nilpcent}

Let $\pi$ be a pyramid with partition $\bp = (p_1,\dots,p_n)$,
such that $|\bp| = N$. Let $G = \GL_N(\kk)$,
so $\g = \gl_N(\kk)$, which is a restricted Lie algebra with $p$-power map
given by the $p$th matrix power.
We write $\{e_{i,j} \mid 1 \le i,j \le N\}$ for the standard basis of $\g$ consisting
of matrix units.

The pyramid $\pi$ is used to determine the nilpotent element
\begin{equation} \label{e:enilp}
e := \sum_{\substack{\row(i) = \row(j) \\ \col(i) = \col(j)-1}} e_{i,j} \in \g,
\end{equation}
which has Jordan type $\bp$.
Note that $e$ depends only on $\bp$ and not the choice of pyramid $\pi$.

The centraliser $\g^e$ of $e$ in $\g$ has a basis
\begin{equation}
\label{e:centraliserbasis}
\{c_{i,j}^{(r)} \mid 1\le i,j \le n, \ s_{i,j} \le r < s_{i,j} + p_{\min(i,j)}\}
\end{equation}
where
\begin{equation*}
c_{i,j}^{(r)} :=
\sum_{\substack{1\le h,k,\le N\\ \row(h) = i, \row(k) = j \\ \col(k) - \col(h) = r}} e_{h,k} \in \gl_N.
\end{equation*}
This is stated for example in \cite[Lemma~2.1]{GTmin}, although we warn the reader that the notation
used here and there differs by a shift by one in the superscripts.
In \cite[Lemma~2.1]{GTmin} it is also stated that the Lie brackets are given by
\begin{equation}
\label{e:centraliserrelations}
[c_{i,j}^{(r)}, c_{k, l}^{(s)}] = \delta_{j,k} c_{i,l}^{(r+s)} - \delta_{i,l} c_{k,j}^{(r+s)}.
\end{equation}
It is straightforward to see that the $p$-power map on $\g^e$ is given by
\begin{equation}
\label{e:centraliserpstructure}
(c_{i,j}^{(r)})^{[p]} = \delta_{i,j} c_{i,j}^{(rp)}.
\end{equation}
To make sense of these formulas we adopt the convention, here and throughout, that $c_{i,j}^{(r)} = 0$ when $r \ge s_{i,j} + p_{\min(i,j)}$.

We note here that the labelling of the basis of $\g^e$ given in \eqref{e:centraliserbasis}
does depend on the choice of pyramid $\pi$.  However, the elements in the basis only depends
on the partition $\bp$, and relabelling between different choices of pyramids just involves
shifting the superscripts.

\subsection{The centre of the enveloping algebra of the centralizer} \label{ss:Znilpcent}
We now go on to describe the centre $Z(\g^e)$ of $U(\g^e)$. Such a description was first obtained by the second author in \cite{To}, however we will need
a much more precise formulation of this result which is compatible
with the theory of Yangians.
As such we draw heavily on the description of $Z(\g^e)$ given by Brown--Brundan \cite[Main theorem]{BB} in characteristic zero.
In {\em loc.\ cit.}, the statement is given for the case that the pyramid $\pi$ is left justified, which is equivalent to
the condition $s_{i,j} = 0$ when $i > j$ on the shift matrix $\sigma$, i.e.\ that $\sigma$ is upper triangular. From there it is easy to deduce a description of $Z(\g^e)$ in terms of the basis of $\g^e$ corresponding to any pyramid, as this involves is a trivial change of notation.
For this reason {\em we assume that $\pi$ is left justified up to and including Lemma \ref{L:Ucentralisercentre}},
so that our notation is aligned with that of \cite{BB}.

We begin by stating some formulas for elements of
$U(\g^e)$ which appeared in \cite[(1.3)]{BB} over $\C$.
Define the elements
\begin{equation*}
\tc_{i,j}^{(r)} := c_{i,j}^{(r)} - \delta_{r,0} \delta_{i,j} (i-1) p_i \in U(\g^e)
\end{equation*}
over the indexing set $\{(i,j,r) \mid
1 \le i,j \le n,\, 0 \le r < p_{\min(i,j)}\}$.
Then define the sequence
\begin{equation}
\label{e:invariantdegree}
(d_1,\dots.,d_N) = (\underbrace{1,\dots.,1}_{p_n \text{ times}},
\underbrace{2,\dots,2}_{p_{n-1} \text{ times}},\dots,\underbrace{n,\dots,n}_{p_1 \text{ times}}).
\end{equation}
This sequence is known to give the total degrees of a set of homogeneous generators of $S(\g^e)^{G^e}$, as is explained in \cite[Section 3]{To}.

For a subcomposition $\bm$ of $\bp$ such that $\ell(\bm) = d_{|\bm|}$, with nonzero entries
$m_{i_1},\dots,m_{i_d}$, where $d = d_{|\bm|}$  we define the $\bm$-column determinant of $(\tc_{i,j}^{(r)})$ to be
\begin{equation} \label{e:cdetcij}
\cdet_\bm(\tilde c_{i,j}^{(r)}) = \sum_{w\in \Sf_d} \sgn(w) \tc_{i_{w1}, i_1}^{(m_{i_1}-1)} \cdots \tc_{i_{wd}, i_d}^{(m_{i_d}-1)},
\end{equation}
where $\Sf_d$ denotes the symmetric group of degree $d$.
It is shown in \cite[Lemma 3.8]{BB} that all $\tc_{i_{wj}, i_j}^{(m_{i_j}-1)}$ involved in the above definition
of $\cdet_\bm(\tilde c_{i,j}^{(r)})$ are defined, i.e.\ that
$s_{i_{wj},i_j} = p_{i_j} - p_{\min(i_{wj},i_j)} < m_{i_j} \le p_{i_j} = p_{i_wj} + s_{i_{wj}, i_j}$.

Finally for $s = 1,\dots,N$ we define
\begin{equation}
\label{e:definezr}
z_s := \sum_{\substack{\bm \sub \bp \\ |\bm| = s,\, \ell(\bm) = d_s}} \cdet_\bm(\tc_{i,j}^{(r)}).
\end{equation}

We move on to state Lemma \ref{L:Ucentralisercentre}, which can essentially be
deduced from \cite[Theorem~3]{To}.
As our statement is slightly different and more explicit, we include an outline of the proof.

\begin{Lemma}
\label{L:Ucentralisercentre}
$ $
\begin{enumerate}
\item[(a)] The elements $z_1,\dots,z_N$ are algebraically independent generators of
$U(\g^e)^{G^e}$;
\item[(b)] $Z(\g^e)$ is a free $Z_p(\g^e)$-module of rank $p^N$ with basis
$\{z_1^{k_1} \cdots z_N^{k_N} \mid 0 \le k_i < p\}$,
and $Z_{HC}(\g^e)$ is a free $Z_p(\g^e)^{G^e}$-module with the same basis.
\item[(c)] The multiplication map $Z_p(\g^e) \otimes_{Z_p(\g^e)^{G^e}} U(\g^e)^{G^e} \to Z(\g^e)$
is an isomorphism.
\end{enumerate}
\end{Lemma}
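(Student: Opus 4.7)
The plan is to combine the abstract description of $Z(\g^e)$ due to \cite[Theorem~3]{To} with explicit symbol computations in the PBW filtration, matching the formulas of \cite{BB} adapted to characteristic~$p$. Recall $\gr U(\g^e) = S(\g^e)$, and that by \cite[Section~3]{To} the invariant ring $S(\g^e)^{G^e}$ is polynomial on homogeneous generators of degrees $d_1,\dots,d_N$ listed in \eqref{e:invariantdegree}.

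For (a), I would identify the leading term $\gr z_s \in S^{d_s}(\g^e)^{G^e}$ with a nonzero scalar multiple of the $s$th standard generator of $S(\g^e)^{G^e}$. The constant $\delta_{r,0}\delta_{i,j}(i-1)p_i$ in the definition of $\tc_{i,j}^{(r)}$ has PBW-filtration degree zero, so the leading symbol of $\cdet_\bm(\tc_{i,j}^{(r)})$ is the honest commutative determinant $\det_\bm(c_{i,j}^{(r)}) \in S(\g^e)$, matching the classical description in \cite{BB}. It follows that $\gr z_1,\dots,\gr z_N$ are algebraically independent generators of $S(\g^e)^{G^e}$, and a standard filtered lifting argument upgrades this to (a).

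To establish the freeness underlying (b), write $f_i := \gr z_i$ and apply the $G^e$-equivariant isomorphism $\xi_{\g^e} : S(\g^e)^{(1)} \isoto Z_p(\g^e)$ to produce elements $\zeta_i := \xi_{\g^e}(f_i) \in Z_p(\g^e)^{G^e}$ of PBW-filtration degree $pd_i$ with leading symbol $f_i^p$. Restricting $\xi_{\g^e}$ to $G^e$-invariants yields an isomorphism $Z_p(\g^e)^{G^e} \cong (S(\g^e)^{G^e})^{(1)}$, identifying $Z_p(\g^e)^{G^e}$ with the polynomial algebra on the $\zeta_i$. Since the quotient $\kk[f_1,\dots,f_N]/(f_1^p,\dots,f_N^p)$ has basis $\{f_1^{k_1}\cdots f_N^{k_N} \mid 0 \le k_i < p\}$, Lemma~\ref{L:filtfree} applied to the induced filtration on $U(\g^e)^{G^e} = \kk[z_1,\dots,z_N]$ shows that this algebra is a free $Z_p(\g^e)^{G^e}$-module on the corresponding monomials $\{z_1^{k_1}\cdots z_N^{k_N} \mid 0 \le k_i < p\}$; this is precisely the Harish-Chandra half of (b).

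Finally, \cite[Theorem~3]{To} tells us that $Z(\g^e)$ is generated as an algebra by $Z_p(\g^e) \cup U(\g^e)^{G^e}$, and $Z_p(\g^e) \cap U(\g^e)^{G^e} = Z_p(\g^e)^{G^e}$ tautologically. I invoke Lemma~\ref{L:tensorprodpresentation} with $B = Z_p(\g^e)$, $C = U(\g^e)^{G^e}$, and the $c_i$ equal to the $p^N$ monomials of the previous paragraph; its hypothesis~(b) is immediate from that paragraph, while hypothesis~(a) can be checked by passing to the associated graded and using the freeness of $S(\g^e)$ over its Frobenius subring. This yields the isomorphism in (c), and base-changing the $Z_p(\g^e)^{G^e}$-basis to $Z_p(\g^e)$ completes (b). The main obstacle I foresee is step (a): pinning down the top symbol $\gr z_s$ as the correct standard generator of $S(\g^e)^{G^e}$ requires a careful translation of the column-determinant conventions of \cite{BB} from $\C$ to characteristic~$p$, and a verification that the sum in \eqref{e:definezr} survives any accidental cancellation among leading terms.
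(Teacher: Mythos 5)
The overall roadmap here is close to the paper's: identify the leading symbols of the $z_s$ with known generators of $S(\g^e)^{G^e}$, establish the Harish-Chandra half of (b) via $\xi_{\g^e}$ and a filtered freeness argument, and then use Lemma~\ref{L:tensorprodpresentation} to glue $Z_p(\g^e)$ and $U(\g^e)^{G^e}$ together. The reordering (proving (c) before the $Z_p$-half of (b), then base-changing) is harmless. However, there are two concrete gaps.

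First, in your step (a) you never actually establish that $z_s \in U(\g^e)^{G^e}$. "Identifying the leading term with the correct generator of $S(\g^e)^{G^e}$" tells you what $\gr z_s$ is, but it does not make $z_s$ itself $G^e$-invariant; the PBW filtration is $G^e$-stable so $\gr U(\g^e)^{G^e} \subseteq S(\g^e)^{G^e}$, but the reverse inclusion (i.e., that every homogeneous invariant of $S(\g^e)$ lifts to $U(\g^e)^{G^e}$) is precisely the thing that is not automatic in characteristic $p$, where Kostant's complete-reducibility argument is unavailable. The paper resolves this by a reduction modulo $p$ from the Brown--Brundan description over $\C$, following the argument of \cite[Corollary~1]{To}. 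Some such input is needed before the filtered lifting argument can even get started.

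Second, your verification of hypothesis~(a) of Lemma~\ref{L:tensorprodpresentation} — that the $Z_p(\g^e)$-module generated by the monomials $z^k$ is free on them — leans on "passing to the associated graded and using the freeness of $S(\g^e)$ over its Frobenius subring." The latter freeness is true but too weak: it gives a basis of $S(\g^e)$ over $S(\g^e)^p$ in terms of $p$-restricted monomials in the linear coordinates, and says nothing about whether the $p$-restricted monomials in the chosen invariants $f_1,\dots,f_N$ are $S(\g^e)^p$-linearly independent. (Algebraic independence of $f_1,\dots,f_N$ over $\kk$ is not enough: an algebraically independent element can easily already lie in $S(\g^e)^p$.) What you actually need is the structural statement that $S(\g^e)^{\g^e}$ is free of rank $p^N$ over $S(\g^e)^p$ with the $f^k$ as a generating (hence, by Lemma~\ref{L:freebasis}, free) set — this is exactly \cite[Theorem~9]{To}, which the paper cites at this point. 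A $p$-basis argument in the style of Corollary~\ref{C:ZY0inj} would also do, but either way it is a genuine input that your sketch currently glosses over.

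Both gaps are fillable, and once they are, your argument coincides in substance with the paper's; but as written the proof does not yet establish $z_s$'s invariance nor the independence of the $z^k$ over $Z_p(\g^e)$.
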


\begin{proof}
We begin the proof by briefly considering the situation when $\kk$ has characteristic 0.
In this case,  using the fact that $G^e$ is connected, we have that $Z(\g^e) = U(\g^e)^{G^e}$.
Since $\pi$ is assumed to be left justified, the statement (a) in characteristic 0 is precisely \cite[Main Theorem]{BB}.
Now a reduction modulo $p$ argument, identical to that given in the proof of \cite[Corollary~1]{To},
can be used to deduce that $z_s \in U(\g^e)^{G^e}$
for $\kk$ of characteristic $p$.

By the definition given in  \eqref{e:definezr} we have that $z_s \in F_{d_s} U(\g^e)$ in the PBW filtration, for all $s$, and
$$
\gr_{d_s} z_s = \sum_{\substack{\bm \sub \bp \\ |\bm| = s,\, \ell(\bm) = d_s}} \cdet_\bm(c_{i,j}^{(r)}) \in S(\g^e).
$$
In \cite[Theorem~9]{To} it was demonstrated that $\{\gr_{d_s} z_s \mid s=1,\dots,N\}$ are algebraically independent generators of $S(\g^e)^{G^e}$.
We should warn the reader that the notation in {\it loc. cit.\ }was different: the partition $\bp$ was denoted $\lambda$,
the element $c_{i,j}^{(r)}$ was denoted $\xi_j^{i, r + p_j - p_i}$, and the notation $x_s$ was used to
denote the element determined by the formula for $\gr_{d_s} z_s$ above.
Now standard filtration arguments show that $z_1,\dots,z_N$ are algebraically independent,
and generate $U(\g^e)^{G^e}$.  This completes the proof of (a).

Taking associated graded algebras we have $\gr Z_p(\g^e) = S(\g^e)^p$
and $\gr Z(\g^e) \sub S(\g^e)^{\g^e}$, however this inclusion is actually an equality thanks to the proof of \cite[Theorem~3]{To}.  It follows from \cite[Theorem~9]{To}
that $\{ (\gr_{d_1} z_1)^{k_1} \cdots (\gr_{d_N} z_N)^{k_N} \mid 0 \le k_i < p\}$  generates $S(\g^e)^{\g^e}$ as a $S(\g^e)^p$-module.
and also that $S(\g^e)^{\g^e}$ is free of rank $p^N$ over $S(\g^e)^p$.
Therefore, $\{ (\gr_{d_1} z_1)^{k_1} \cdots (\gr_{d_N} z_N)^{k_N} \mid 0 \le k_i < p\}$
is in fact a basis of $S(\g^e)^{\g^e}$ over $S(\g^e)^p$ by Lemma~\ref{L:freebasis}.
Now we can use that $(\gr_{d_1} z_1)^{k_1} \cdots (\gr_{d_N} z_N)^{k_N} = \gr_{k_1 d_1+\dots+k_N d_N} (z_1^{k_1} \cdots z_N^{k_N})$
for any choice of $k_1,\dots,k_N$ and apply Lemma~\ref{L:filtfree} to obtain the first assertion in (b).

Next we observe that $U(\g^e)^{G^e} \cap Z_p(\g^e) = Z_p(\g^e)^{G^e}$, and that
$\gr Z_p(\g^e)^{G^e} = (S(\g^e)^{G^e})^p$.  It is clear that $S(\g^e)^{G^e}$ is free as an $(S(\g^e)^{G^e})^p$-module
with basis $\{ \gr z_1^{k_1} \cdots \gr z_N^{k_N} \mid 0 \le k_i < p\}$.  Therefore, using Lemma~\ref{L:filtfree}, we deduce that
$U(\g)^{G^e}$ is free as a $Z_p(\g^e)^{G^e}$-module with basis $\{ z_1^{k_1} \cdots z_N^{k_N} \mid 0 \le k_i < p\}$ giving
the second assertion in (b).  Now
we can apply Lemma~\ref{L:tensorprodpresentation} to obtain (c).
\end{proof}

We consider the special case where $e=0$, i.e.\ when $\bp = (1,\dots,1)$.
Here we can be
more explicit about the generators of $U(\g)^G$ as we explain below, where we observe that these generators
arise from the Capelli identity.  These generators of $U(\g)^G$ are well-known in characteristic zero,
see for example \cite[\S3.8]{BKrep},
and we expect it is also known in positive characteristic,
so we just give a short justification for convenience.

Recall that the {\em column determinant} $\cdet(A)$
of a square $N \times N$-matrix $A = (a_{i,j})_{1\le i,j\le N}$ with coefficients in
an associative algebra is defined by
\begin{equation} \label{e:cdetdefn}
\cdet(A) = \sum_{w \in \Sf_N} \sgn(w) a_{w(1), 1}\cdots a_{w(N),N}.
\end{equation}
Let $u$ be a formal variable and consider the determinant
\begin{align}
\label{e:ZcentralUg}
Z^*(u) & = u^N + \sum_{r=1}^N Z^{(r)} u^{N-r} \nonumber \\
&:= \cdet
\left( \begin{array}{cccc} e_{1,1} + u &  e_{1,2}  & \cdots & e_{1,N}\\
e_{2, 1} & e_{2, N-1} + u - 1 & \cdots & e_{2, N} \\
\vdots & \vdots & \ddots & \vdots\\
e_{N, 1} & e_{N, 2} & \cdots  & e_{N, N} + u - N + 1 \\
\end{array} \right),
\end{align}
where the entries of the matrix are considered as elements of $U(\g)[u]$.

For $\bm \sub \bp$,
we let $z^0_\bm$ be the matrix formed
by the rows and columns indexed by the set of $i$ such that $m_i = 1$ of the matrix appearing in \eqref{e:ZcentralUg},
after replacing a diagonal entry $e_{i,i}+u-i+1$  by $e_{i,i}-i+1$.
Using the formula for calculating the column determinant in \eqref{e:cdetdefn}
we can get the decomposition
$$
Z^*(u) = \sum_{\bm \sub \bp} u^{N-|\bm|} \cdet z^0_\bm.
$$
Now we  observe that $\cdet z^0_\bm$ is equal to $\cdet_\bm(\tilde c_{i,j}^{(r)})$
as defined in \eqref{e:cdetcij}, noting that in the present case
where $\bp = (1,\dots,1)$, we have $\tilde c_{i,j}^{(0)} = e_{i,j} - \delta_{i,j} (i-1)$.

Putting this all together, we can deduce that $z_r$ as defined in \eqref{e:definezr} is equal to $Z^{(r)}$ as defined
in \eqref{e:ZcentralUg}.  Consequently, the statements in Lemma~\ref{L:Ucentralisercentre} hold for $U(\g)$
with $Z^{(r)}$ in place of $z_r$.

We return to the case of general $e$, and we record an important technical lemma characterising the $p$-centre of $U(\g^e)$,
which is crucial to our later arguments.  Before this is stated in Lemma~\ref{L:gepcent}, we need to
give some more notation.  We let $\bar \pi$ be the pyramid obtained from $\pi$ by adding
an extra row on the bottom with $p_n$ boxes.  Then let $\bar \g = \gl_{N+p_{n}}(\kk)$, and let $\bar e \in \bar \g$
be the nilpotent element corresponding to $\bar \pi$.  The centralizer $\bar \g^{\bar e}$ has basis given by
$\{c_{i,j}^{(r)} \mid 1\le i,j \le n+1, \ s_{i,j} \le r < s_{i,j} + p_{\min(i,j)}\}$, where we extend the notation
used in \eqref{e:centraliserbasis}, setting $p_{n+1} := p_n$.  Inspecting \eqref{e:centraliserrelations} and \eqref{e:centraliserpstructure} we see that $\g^e$
identifies naturally with a restricted subalgebra of $\bar \g^{\bar e}$.

\begin{Lemma} \label{L:gepcent}
$Z_p(\g^e) = U(\g^e) \cap Z(U(\bar \g^{\bar e}))$.
\end{Lemma}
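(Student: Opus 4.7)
The plan has two main ingredients: a PBW filtration reduction to a statement about symmetric algebras, followed by an explicit derivation argument exploiting the extra bottom row of $\bar\pi$.

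The inclusion $Z_p(\g^e) \sub U(\g^e) \cap Z(U(\bar\g^{\bar e}))$ is immediate from the observation made just before the lemma: since $\g^e$ is a restricted Lie subalgebra of $\bar\g^{\bar e}$, for any $x \in \g^e$ the element $x^p - x^{[p]}$ lies in $U(\g^e)$ and in $Z_p(\bar\g^{\bar e}) \sub Z(U(\bar\g^{\bar e}))$.

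For the reverse inclusion, I would set $A := U(\g^e) \cap Z(U(\bar\g^{\bar e}))$, filtered by intersection with the PBW filtration on $U(\g^e)$. The embedding $U(\g^e) \into U(\bar\g^{\bar e})$ is filtered and induces $S(\g^e) \into S(\bar\g^{\bar e})$ on associated graded pieces; applying Lemma~\ref{L:Ucentralisercentre} to $\bar e$ (and extracting $\gr Z(U(\bar\g^{\bar e})) = S(\bar\g^{\bar e})^{\bar\g^{\bar e}}$ from its proof, via \cite[Theorem~3]{To}) yields
\begin{equation*}
\gr A \;\sub\; S(\g^e) \cap S(\bar\g^{\bar e})^{\bar\g^{\bar e}}.
\end{equation*}
Because $\gr Z_p(\g^e) = S(\g^e)^p$, and inclusions between filtered subspaces lift from associated graded by the usual argument of Lemma~\ref{L:filtfree}, everything then reduces to the symmetric-algebra identity
\begin{equation*}
S(\g^e) \cap S(\bar\g^{\bar e})^{\bar\g^{\bar e}} \;=\; S(\g^e)^p.
\end{equation*}

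To establish this, let $f$ lie in the intersection and, for each $j \in \{1,\dots,n\}$, apply the derivation of $S(\bar\g^{\bar e})$ induced by $x_j := c_{n+1,j}^{(s_{n+1,j})}$. Using \eqref{e:centraliserrelations}, and the fact that $n+1 \notin \{1,\dots,n\}$, the action on the polynomial generators of $S(\g^e)$ is
\begin{equation*}
x_j \cdot c_{i,k}^{(s)} \;=\; \delta_{i,j}\, c_{n+1,k}^{(s_{n+1,j}+s)},
\end{equation*}
so the hypothesis $x_j \cdot f = 0$ becomes
\begin{equation*}
\sum_{(k,s)} \frac{\partial f}{\partial c_{j,k}^{(s)}} \, c_{n+1,k}^{(s_{n+1,j}+s)} \;=\; 0
\end{equation*}
in $S(\bar\g^{\bar e})$, summed over valid labels $(k,s)$. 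The key combinatorial check, using the shift-matrix identities $s_{a,c} = s_{a,b}+s_{b,c}$ for monotone triples and their consequence $s_{j,k}+s_{k,j} = p_{\max(j,k)} - p_{\min(j,k)}$ (derived from \eqref{e:sigmalgivesp}), is that each shifted index $s_{n+1,j}+s$ lies in the valid range $[s_{n+1,k},\, s_{n+1,k}+p_k)$, so each $c_{n+1,k}^{(s_{n+1,j}+s)}$ is a genuine, and pairwise distinct, basis element of $\bar\g^{\bar e}$. Equating coefficients in $S(\bar\g^{\bar e})$ therefore forces $\partial f/\partial c_{j,k}^{(s)} = 0$ for every valid $(k,s)$. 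Letting $j$ range over $\{1,\dots,n\}$ kills every partial derivative of $f$ with respect to every polynomial generator of $S(\g^e)$, which in characteristic $p$ forces $f \in S(\g^e)^p$, as required.

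The main obstacle is precisely this combinatorial step: confirming that $x_j$ really sends each basis element of the $j$-th row of $\g^e$ to a distinct, genuinely nonzero basis vector of $\bar\g^{\bar e}$. Everything else amounts to standard PBW filtration bookkeeping together with the Leibniz rule applied to a carefully chosen family of derivations.
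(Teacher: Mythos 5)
Your proof is correct and follows essentially the same strategy as the paper's: reduce by the PBW filtration to the symmetric-algebra statement $S(\g^e) \cap S(\bar\g^{\bar e})^{\bar\g^{\bar e}} = S(\g^e)^p$, and then detect non-$p$th-powers by hitting them with $\ad\bigl(c_{n+1,j}^{(s_{n+1,j})}\bigr)$, using the extra bottom row of $\bar\pi$ to produce genuinely new basis vectors. Your phrasing of the key step is cleaner than the paper's: you package it as the Leibniz rule plus the standard characteristic-$p$ fact that a polynomial all of whose first-order partials vanish is a $p$th power, whereas the paper works directly with the monomial expansion of $y$ over $S(\g^e)^p$ and isolates a surviving term. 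One small point: the reduction ``inclusions lift from associated graded'' is not really an instance of Lemma~\ref{L:filtfree} (which concerns bases of free modules); what you actually need is the elementary minimal-degree argument that the paper spells out (subtracting $\xi_{\g^e}(y)$ when $\gr_d z = y^p$), which you should cite or state rather than attributing to that lemma.
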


\begin{proof}
Clearly we have $Z_p(\g^e) \sub U(\g^e) \cap Z(U(\bar \g^{\bar e}))$. Suppose
that this inclusion is strict and let $z \in U(\g^e) \cap Z(U(\bar \g^{\bar e})) \setminus
Z_p(\g^e)$ such that $z \in F_d U(\g^e)$ with $d$ as small as possible.  If
$\gr_d z = y^p \in S(\g^e)^p$ where $y \in S(\g^e)$, then $z - \xi_{\g^e}(y) \in U(\g^e) \cap Z(U(\bar \g^{\bar e})) \setminus
Z_p(\g^e)$ and $z - \xi_{\g^e}(y) \in F_{d-1} U(\g^e)$.
Thus we have that $\gr_d z \in S(\g^e) \cap S(\bar \g^{\bar e})^{\bar \g^{\bar e}} \setminus
S(\g^e)^p \ne \varnothing$.

Let $y \in S(\g^e) \cap S(\bar \g^{\bar e})^{\bar \g^e} \setminus
S(\g^e)^p$.
Since $S(\g^e)$ is a free $S(\g^e)^p$-module we may define $I = \{(i,j,r) \mid 1 \le i,j \le n,\ s_{i,j} \le r < s_{i,j} + p_{\min(i,j)}\}$, and write
$y = \sum_{m} f_m \prod_{(i,j,r) \in I} (c_{i,j}^{(r)})^{m(i,j,r)},$
for certain elements $f_m \in S(\g^e)^p$, where the sum is taken over all maps $m : I \to \{0,\dots,p-1\}$. Since $y \notin S(\g^e)^p$ there exists an $m_0 : I \to \{0,\dots,p-1\}$ and a tuple $(i_0,j_0,r_0) \in I$ such that $f_{m_0} \neq 0$ and $m_0(i_0,j_0,r_0) \neq 0$. Using \eqref{e:centraliserrelations} we can write
\begin{multline*}
\ad\left(c_{n+1,i_0}^{(s_{n+1, i_0})}\right) y = f_{m_0} m_0(i_0, j_0, r_0) c_{n+1, j_0}^{(s_{n+1, i_0} + r_0)} (c_{i_0, j_0}^{(r_0)})^{m_0(i_0,j_0,r_0)-1} \prod_{(i_0, j_0, r_0) \neq (i,j,r)} (c_{i,j}^{(r)})^{m_0(i,j,r)}\\
  + f_{m_0} (c_{i_0, j_0}^{(r_0)})^{m_0(i_0,j_0,r_0)} \ad\left(c_{n+1,i_0}^{(s_{n+1, i_0})}\right) \prod_{(i_0, j_0, r_0) \neq (i,j,r)} (c_{i,j}^{(r)})^{m_0(i,j,r)}\\
  + \sum_{m\neq m_0} f_m \ad\left(c_{n+1,i_0}^{(s_{n+1, i_0})}\right) \prod_{(i,j,r) \in I} (c_{i,j}^{(r)})^{m(i,j,r)}.
\end{multline*}
Since $r_0 \leq s_{i_0, j_0} + p_{\min(i_0, j_0)}$ and $s_{n+1, i_0} + s_{i_0, j_0} \leq s_{n+1, j_0}$ it follows that $s_{n+1, i_0} + r_0 \leq s_{n+1,j_0} + p_{\min(n+1, j_0)}$.
In particular, $c_{n+1, j_0}^{(s_{n+1, i_0} + r_0)} \neq 0$ and so the summand occurring in the first line of the above expression for $\ad(c_{n+1,i_0}^{(s_{n+1, i_0})}) y$ is non-zero. Now it remains to observe that
 the non-zero monomial summands occurring in the expressions $$\{\ad\left(c_{n+1,i_0}^{(s_{n+1, i_0})}\right)\prod_{(i,j,r) \in I} (c_{i,j}^{(r)})^{m(i,j,r)} \mid m : I \to \{0,1,\dots,p-1\}\}$$ are all distinct;
this follows readily from \eqref{e:centraliserrelations}.
We conclude that $\ad(c_{n+1,i_0}^{(s_{n+1, i_0})}) y \neq 0$ which contradicts the assumption $y \in S(\bar \g^{\bar e})^{\bar \g^{\bar e}}$.

This contradiction confirms that the inclusion $Z_p(\g^e) \sub U(\g^e) \cap Z(U(\bar \g^{\bar e}))$ is actually an equality.
\end{proof}

\subsection{The truncated shifted current Lie algebra}
\label{ss:truncations}
Let $n \in \Z_{\ge 0}$.
The {\em current Lie algebra of $\gl_n(\kk)$}
is the Lie algebra $\c_n := \gl_n(\kk) \otimes \kk[t]$. For $x \in \gl_n(\kk)$ and $f \in \kk[t]$
we abbreviate our notation by writing $xf$ for $x \otimes f \in \c_n$, and observe that $\c_n$ has a basis
\begin{equation*}
\{e_{i,j} t^r \mid 1 \le i,j \le n, \, \, \, r \ge 0\},
\end{equation*}
where we write $\{e_{i,j} \mid 1 \le i,j \le n\}$ for the
standard basis of matrix units in $\gl_n(\kk)$.
The commutator between elements in this basis is given by
\begin{equation} \label{e:currentcommutator}
[e_{i,j} t^r,e_{k,l} t^s] = (\delta_{j,k}e_{i,l}-\delta_{i,l}e_{k,j})t^{r+s}.
\end{equation}
We have that $\c_n$ is a restricted Lie algebra with the $p$-power map defined
by $(xf)^{[p]}:= x^{[p]}f^p$ for $x \in \gl_n(\kk)$ and $f \in \kk[t]$, where
$x^{[p]}$ denotes the $p$th matrix power of $x$, see for example \cite[Lemma~3.3]{BT}.
So in particular the $p$-power map is given on the basis of $\c_n$ by
\begin{equation} \label{e:currentppower}
(e_{i,j}t^r)^{[p]} = \delta_{i,j} e_{i,j} t^{pr}.
\end{equation}

Now let $\sigma = (s_{i,j})$ be any shift matrix of size $n$.
The {\em shifted current Lie algebra} is defined to be the
subspace $\c_n(\sigma)$ of $\c_n$ spanned by
\begin{equation} \label{e:csbasis}
\{e_{i,j} t^r \mid 1 \le i,j \le n , \, \, \, r \ge s_{i,j}\}.
\end{equation}
It is observed in \cite[Lemma~3.3]{BT} that $\c_n(\sigma)$ is a restricted Lie subalgebra of $\c_n$.

We fix an integer $l > s_{1,n} + s_{n,1}$
which we call {\em the level}, following the terminology of \S\ref{ss:comb}.
Then using \eqref{e:sigmalgivesp} we define the partition
$\bp = (p_1,\dots,p_n)$ from the data $(\sigma, l)$, and we let $N = \sum_{i=1}^n p_i$.
We define the {\em truncated shifted current Lie algebra}
$\cnls$ to be the quotient of $\cns$
by the
ideal $\i_{n,l}$ generated by $\{e_{1,1}t^r \mid r \ge p_1\}$.

We recall from \S\ref{ss:comb} that $(\sigma,l)$ determines a pyramid $\pi$, which we can
use to define $e \in \g = \gl_N(\kk)$ as in \eqref{e:enilp}. The next lemma shows that
the truncated current Lie algebra is isomorphic to the centralizer $\g^e$.  For the
statement we recall that a basis is given in \eqref{e:centraliserbasis}.

\begin{Lemma}
\label{L:truncatedshiftedbasis}
$ $
\begin{enumerate}
\item[(a)] A basis of $\i_{n,l}$ is given by $\{e_{i,j}t^r \mid 1 \le i,j \le n, \ r \ge s_{i,j} + p_{\min(i,j)}\}$.
\item[(b)] The linear map $\tilde \theta : \cns \to \g^e$ defined by
\begin{equation*}
\tilde \theta(e_{i,j}t^r) = \left\{\begin{array}{cc} 	c_{i,j}^{(r)} & s_{i,j} \le r < s_{i,j} + p_{\min(i,j)} \\
														0 & \text{ otherwise.} \end{array} \right.
\end{equation*}
is a surjective homomorphism of restricted Lie algebras with $\ker \tilde \theta = \i_{n,l}$.
In particular, $\tilde\theta$ induces an isomorphism $\theta : \cnls \isoto \g^e$ of restricted Lie algebras, and a basis of $\cnls$
is given by
\begin{equation}
\label{e:cslbasis}
\{e_{i,j}t^r + \i_{n,l} \mid 1 \le i,j \le n, \, \, \, s_{i,j} \le r < s_{i,j} + p_{\min(i,j)}\}.
\end{equation}
\end{enumerate}
\end{Lemma}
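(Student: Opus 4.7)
The plan is to prove (a) and (b) together by analyzing the map $\tilde\theta$ directly. First I would check that $\tilde\theta$ is a morphism of restricted Lie algebras: this is a case-by-case comparison of \eqref{e:currentcommutator} and \eqref{e:currentppower} against \eqref{e:centraliserrelations} and \eqref{e:centraliserpstructure}, where the only subtlety is to verify that, under the convention $c_{i,j}^{(r)} = 0$ for $r \ge s_{i,j}+p_{\min(i,j)}$, the two sides vanish simultaneously in every case. Surjectivity is then immediate from \eqref{e:centraliserbasis}, and by construction $\ker\tilde\theta$ is spanned by those basis elements $e_{i,j}t^r$ of $\cns$ with $r \ge s_{i,j}+p_{\min(i,j)}$, since the remaining basis elements are sent bijectively onto the basis \eqref{e:centraliserbasis} of $\g^e$.

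The main task is then to show $\ker\tilde\theta = \i_{n,l}$. The inclusion $\i_{n,l} \subseteq \ker\tilde\theta$ is automatic, since the generators $e_{1,1}t^r$ ($r\ge p_1$) of $\i_{n,l}$ lie in $\ker\tilde\theta$ and the latter is an ideal. For the reverse inclusion, the key ingredient is the combinatorial identity
\begin{equation*}
p_1 + s_{i,1} + s_{1,j} = s_{i,j} + p_{\min(i,j)},
\end{equation*}
valid for all $1\le i,j\le n$, which one derives by feeding the row-length formula $p_i = s_{i,1}+p_1+s_{1,i}$ (a direct consequence of \eqref{e:sigmalgivesp} and the shift-matrix axioms) into one of the relations $s_{1,j} = s_{1,i}+s_{i,j}$ (when $i\le j$) or $s_{i,1} = s_{i,j}+s_{j,1}$ (when $j\le i$). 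Granted this identity, for $i\ne j$ the iterated brackets $[e_{i,1}t^{s_{i,1}},e_{1,1}t^{r'}] = e_{i,1}t^{s_{i,1}+r'}$ followed by $[\,\cdot\,,e_{1,j}t^u] = e_{i,j}t^{s_{i,1}+r'+u}$ produce members of $\i_{n,l}$ whose exponents $s_{i,1}+r'+u$, as $r'\ge p_1$ and $u\ge s_{1,j}$ vary, range over precisely the required $r\ge s_{i,j}+p_{\min(i,j)}$. The diagonal case $i = j\ne 1$ is handled by $[e_{i,1}t^s, e_{1,i}t^u] = e_{i,i}t^{s+u} - e_{1,1}t^{s+u}$, where the correction term is itself a generator of $\i_{n,l}$ since $s+u \ge p_i \ge p_1$; the case $i = j = 1$ is immediate.

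Once $\ker\tilde\theta = \i_{n,l}$ is established, part (a) is exactly the description of $\ker\tilde\theta$, while in (b) the induced map $\theta:\cnls\isoto\g^e$ is the claimed isomorphism of restricted Lie algebras, and the cosets of the basis elements of $\cns$ not lying in $\i_{n,l}$ provide the basis \eqref{e:cslbasis}. The main obstacle is the combinatorial identity above together with the bookkeeping required to ensure the iterated brackets really reach every prescribed exponent; once these are in place, the remaining steps are short direct computations.
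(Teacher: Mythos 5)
Your proof is correct and follows the same broad outline as the paper's (check $\tilde\theta$ is a restricted Lie morphism, identify $\ker\tilde\theta$ with the span $\j_{n,l}$ of the claimed basis, and establish $\j_{n,l} = \i_{n,l}$ by showing $\j_{n,l} \subseteq \i_{n,l}$), but the crucial inclusion $\j_{n,l} \subseteq \i_{n,l}$ is handled by a genuinely different argument. The paper first produces $e_{1,j}t^r$ and $e_{i,1}t^r$ in $\i_{n,l}$, then shows $e_{2,2}t^{p_2+r} \in \i_{n,l}$, and closes by an induction on $n$ applied to the shifted current Lie algebra spanned by $\{e_{i,j}t^r \mid 2 \le i,j \le n, r \ge s_{i,j}\}$. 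You instead avoid induction entirely: the combinatorial identity $p_1 + s_{i,1} + s_{1,j} = s_{i,j} + p_{\min(i,j)}$, together with an iterated bracket through index $1$, reaches every required basis vector in a single step, and the diagonal correction terms are absorbed because $e_{1,1}t^r$ is already a generator. This is cleaner and more explicit, at the modest cost of verifying the identity and some exponent bookkeeping by hand rather than appealing to an inductive hypothesis.

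One small slip: your iterated bracket $[e_{i,1}t^{s_{i,1}}, e_{1,1}t^{r'}]$ vanishes identically when $i = 1$ (it equals $[e_{1,1}, e_{1,1}t^{r'}] = 0$), so for the case $i = 1$, $j > 1$ you should instead use the single bracket $[e_{1,1}t^{r'}, e_{1,j}t^u] = e_{1,j}t^{r'+u}$ with $r' \ge p_1$, $u \ge s_{1,j}$, which yields exactly the exponents $\ge p_1 + s_{1,j} = s_{1,j} + p_{\min(1,j)}$ that you need. With that adjustment the argument is complete.
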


\begin{proof}
Let $\j_{n,l}$ denote the subspace of $\c_n(\sigma)$
with basis $\{e_{i,j}t^r \mid 1 \le i,j \le n, r \ge s_{i,j} + p_{\min(i,j)}\}$.
A straightforward calculation with the
commutator relations in \eqref{e:currentcommutator} shows that $\j_{n,l}$ is in fact an ideal of $\cns$,
and thus we have $\i_{n,l} \sub \j_{n,l}$.

Since $e_{1,1} t^{p_1} \in \i_{n,l}$ and $e_{1,j}t^{s_{1,j}+r} \in \cns$ for $r \ge 0$ we have
$e_{1,j} t^{p_1+s_{1,j}+r} = [e_{1,1}t^{p_1}, e_{1,j} t^{s_{1,j} + r}]$. Similarly
$e_{i,1}t^{p_1+s_{i,1}+r} \in \i_{n,l}$ for $r \ge 0$.
Next we observe $[e_{1,2} t^{p_1+s_{1,2}},e_{2,1} t^{s_{2,1}+r}] = (e_{1,1} - e_{2,2})t^{p_2+r}$ for
$r \ge 0$, where we use that $p_2 = p_1 + s_{1,2} + s_{2,1}$.  Since $p_1 \le p_2$ we have
$e_{1,1}t^{p_2+r} \in \i_{n,l}$, so we can deduce that $e_{2,2}t^{p_2+r} \in \i_{n,l}$ for $r \ge 0$.

By considering the shifted current
Lie algebra spanned by $\{e_{i,j} t^r \mid 2\le i,j \le n,\ r \ge s_{i,j} \}$, and applying
an inductive argument, we obtain that $\j_{n,l} \sub \i_{n,l}$. Hence, $\i_{n,l} = \j_{n,l}$ which proves (a).

The fact that the linear map $\tilde \theta$ in (b) is a homomorphism of restricted Lie algebras
may be seen by comparing the Lie bracket and $p$-power map for
$\g^e$ given in  \eqref{e:centraliserrelations} and \eqref{e:centraliserpstructure}
with those for $\cns$ given in \eqref{e:currentcommutator} and \eqref{e:currentppower}.

It is evident that $e_{1,1} t^r$ lies in the kernel of  $\tilde \theta$ for $r \ge p_1$, so
we have that $\i_{n,l}$ is contained in $\ker \tilde \theta$.   By (a)
we see that \eqref{e:cslbasis}
gives a spanning set
of $\cnls$.  Moreover, by \eqref{e:centraliserbasis} the elements given in \eqref{e:cslbasis} are sent to a basis $\g^e$ by the induced map $\theta : \cnls \to \g^e$.
From this it follows that $\theta$ is an isomorphism, and that \eqref{e:cslbasis} is a basis of $\cnls$.
\end{proof}

\begin{Remark} \label{R:cZform}
For later use we observe that the shifted current algebra and its truncation
can be defined over the integers. We write $\cns_\Z$ for the
free $\Z$-submodule of $\gl_n(\Z) \otimes_\Z \Z[t]$ spanned by the elements
\eqref{e:csbasis}, equipped with its Lie ring structure. We define $\cnls_\Z$
to be the quotient of $\cns_\Z$ by the ideal generated by $\{e_{1,1}t^r \mid r \ge p_1\}$.
Then we observe that the proof of Lemma~\ref{L:truncatedshiftedbasis} can be
applied verbatim to show that $\cnls_\Z$ is a free $\Z$-module spanned by the elements
\eqref{e:cslbasis}.
\end{Remark}

\section{Shifted Yangians and $W$-algebras}
\label{S:YandW}

In this section we fix $n \in \Z_{\ge 1}$, a shift matrix $\sigma = (s_{i,j})$ of size $n$ and
an integer $l > s_{1,n} + s_{n,1}$, which, as usual, we call the level.
We define the pyramid $\pi$ from $(\sigma, l)$ as explained
in \S\ref{ss:comb}.  The partition
$\bp = (p_1,\dots,p_n)$ is defined by \eqref{e:sigmalgivesp}, and we let $N = \sum_{i=1}^n p_i$.
We define $e \in \g = \gl_N(\kk)$ as in \eqref{e:enilp}, and let $G = \GL_N(\kk)$.

\subsection{Shifted Yangians} \label{ss:Yangian}

The {\em shifted Yangian} (over $\kk$) is
the $\kk$-algebra $\Yns$ with generators
\begin{equation} \label{e:Ygens}
\begin{array}{c}
\{D_i^{(r)} \mid 1\le i \le n, \ r>0\} \cup \{E_i^{(r)} \mid 1\le i < n,\ r> s_{i,i+1} \} \\ \cup \, \{F_i^{(r)} \mid 1\le i < n, \ r> s_{i+1,i}\}
\end{array}
\end{equation}
and relations given in \cite[Theorem~4.15]{BT}.  The definition of the shifted Yangian
was first given in \cite{BKshift} over a field of characteristic zero and then
considered in positive characteristic in \cite{BT}.

In order to state the PBW theorem we define the PBW generators
of $Y_n(\sigma)$ as follows. For $i =1,\dots,n-1$ we set
\begin{align*}
E_{i,i+1}^{(r)} &:= E_{i}^{(r)}, \\
F_{i,i+1}^{(r)} &:= F_i^{(r)}
\end{align*}
and define inductively
\begin{align} \label{e:Eij}
\begin{array}{rl}
E_{i,j}^{(r)} := [E_{i, j-1}^{(r - s_{j-1,j})}, E_{j-1}^{(s_{j-1, j} + 1)}] & \text{ for } 1 \le i < j \le n \text{ and } r > s_{i,j}, \\
F_{i,j}^{(r)} := [F_{j-1}^{(s_{j,j-1} + 1)}, F_{i,j-1}^{(r-s_{j, j-1})}] & \text{ for } 1 \le i < j \le n \text{ and } r > s_{j,i}
\end{array}
\end{align}

The {\em loop filtration on $\Yns$} is defined by
placing the elements $E_{i,j}^{(r+1)}, D_i^{(r+1)}, F_{i,j}^{(r+1)}$ in filtered degree $r$ for all $r \ge 0$.
We write
$\cF_r \Yns$ for the filtered piece of degree $r$, so that
$\Yns = \bigcup_{r \ge 0} \cF_r \Yns$, and we write $\gr \Yns$ for the associated
graded algebra.
Then \cite[Lemma~4.13]{BT} says that there is an isomorphism
\begin{equation}
\label{e:isogrTSY}
\tilde \psi : U(\cns) \isoto \gr \Yns
\end{equation}
defined by
\begin{align*}
e_{i,i} t^r &\longmapsto \gr_{r} D_i^{(r+1)}\\
e_{i,j} t^r &\longmapsto \gr_{r} E_{i,j}^{(r+1)}\\
e_{j,i} t^r &\longmapsto \gr_{r} F_{i,j}^{(r+1)}
\end{align*}
for $i < j$.
It follows immediately that the monomials in the elements
\begin{equation} \label{e:pbwgens}
\begin{array}{c}
\{D_i^{(r)} \mid 1 \le i \le n,\ r > 0 \} \cup \{E_{i,j}^{(r)} \mid 1 \le i < j \le n,\ r > s_{i,j} \} \\
\cup  \{F_{i,j}^{(r)} \mid 1 \le i < j \le n,\ r > s_{j,i}\}
\end{array}
\end{equation}
taken in any fixed order give a basis of $Y_n(\sigma)$, and this
gives the PBW theorem for $\Yns$.

\subsection{Finite $W$-algebras}
\label{ss:finiteW}
We move on to introduce of the $W$-algebra $U(\g,e)$,
and begin with the definition stated in \cite{GTmod}.  This is the positive characteristic
analogue of the definition first given by Premet in \cite[Section~4]{PrST}.

Consider the cocharacter $\mu : \kk^\times \to G$ defined by
$\mu(t) = \diag(t^{\col(1)},\dots,t^{\col(n)})$; here we use the
notation $\diag(t_1,\dots,t_N)$ to mean the diagonal $N \times N$ matrix
with $i$th diagonal entry equal to $t_i$.
Using $\mu$ we define the $\Z$-grading
\begin{equation}
\label{e:goodgrading}
\g = \bigoplus_{r \in \Z} \g(r) \quad \text{where} \quad \g(r) := \{x \in \g \mid \mu(t) x = t^r x \text{ for all } t\in \kk^\times\} .
\end{equation}
Since the adjoint action of $\mu(t)$ on a matrix unit is
given by $\mu(t) \cdot e_{i,j} = t^{\col(j) - \col(i)} e_{i,j}$,
we have $\g(r) = \sspan\{e_{i,j} \mid \col(j) - \col(i) = r\}$.

We define the subalgebras
\begin{equation}
\label{e:phandm}
\p :=  \bigoplus_{r \ge 0} \g(r), \quad \h = \g(0), \quad \text{and} \quad
\m := \bigoplus_{r < 0} \g(r)
\end{equation}
of $\g$.
Then $\p$ is a parabolic subalgebra of $\g$ with Levi factor $\h$ and $\m$ is the nilradical of the opposite parabolic to $\p$.
Let $M$ be the closed subgroup of $G$ generated by the root subgroups $u_{i,j}(\kk)$
with $\col(j) < \col(i)$, where $u_{i,j} : \kk \to G$ is defined by $u_{i,j}(t) = 1 + te_{i,j}$.
Then we have $\m = \Lie M$.

We define $\chi \in \g^*$ to be the element dual to $e$ via the trace form on $\g$.
Since $e \in \g(1)$, we have that $\chi$ vanishes on $\g(r)$ for $r \ne -1$.
Therefore, $\chi$ restricts to a character of $\m$.
We define $\m_\chi := \{ x - \chi(x) \mid x\in \m\} \sub U(\g)$, which is a
Lie subalgebra of $U(\g)$.  By the PBW theorem there is a direct sum decomposition
\begin{equation*}
U(\g) = U(\g) \m_\chi \oplus U(\p)
\end{equation*}
and thus a projection
\begin{equation} \label{e:pr}
\pr : U(\g) \to U(\p)
\end{equation}
onto the second factor.
The {\em twisted adjoint action}
of $M$ on $U(\p)$  is defined by
\begin{equation}
\label{e:twistM}
\tw(g) \cdot u := \pr (g \cdot u),
\end{equation}
for $g \in M$ and $u \in U(\p)$; a twisted adjoint action on $S(\p)$ can be defined analogously.
Then the $W$-algebra associated to $e$ is defined to be the invariant subalgebra
\begin{equation*}
U(\g,e) := U(\p)^{\tw(M)} = \{ u \in U(\p) \mid \tw(g) \cdot u = u \text{ for all } g \in M\}.
\end{equation*}

We move on to recall a set of generators for $U(\g,e)$.
These are elements
\begin{equation} \label{e:Wgens}
\begin{array}{c}
\{D_i^{(r)} \mid 1\le i \le n,\ r>0\} \, \cup \, \{E_i^{(r)} \mid 1 \le i < n,\ r> s_{i,i+1} \} \\  \cup  \, \{F_i^{(r)} \mid 1\le i < n,\ r> s_{i+1,i}\}
\end{array}
\end{equation}
in $U(\p)$ defined using the remarkable formulas,
given in \cite[Section 9]{BKshift}; see also \cite[Section 4]{GTmin}.
As is shown in the \cite[Theorem~4.3]{GTmin} the elements in \eqref{e:Wgens} are twisted
$M$-invariants, thus elements of $U(\g,e)$, and moreover they generate $U(\g,e)$.
We note there is an abuse of notation as these generators of $U(\g,e)$ have the same names as the generators
for $\Yns$ given in \eqref{e:Ygens}; this overloading of notation will be justified in the next subsection.

Below we state the formula for $D_i^{(r)}$ in \eqref{e:Dir}, and require some notation for this.
Let $\t$ be the Lie algebra of $T$, and write $\{\epsilon_1,\dots,\epsilon_N\}$
for the standard basis of $\t^*$.
We define the weight $\eta \in \t^*$ by
\begin{equation} \label{e:eta}
\eta := \sum_{i=1}^N (n - q_{\col(i)} - \dots - \cdots - q_l) \epsilon_i,
\end{equation}
where we recall that $q_i$ is the height of the $i$th column in the pyramid $\pi$,
and we note that $\eta$ extends to a character of $\p$.
For $e_{i,j} \in \p$ define
\begin{equation*}
\te_{i,j} :=
e_{i,j} + \eta(e_{i,j}).
\end{equation*}
Then by definition
\begin{equation} \label{e:Dir}
D_i^{(r)} := \sum_{s=1}^r (-1)^{r-s} \sum_{\substack{i_1,\dots,i_s \\ j_1,\dots,j_s}} (-1)^{|\{t=1,\dots,s-1 \mid \row(j_t) \le i-1\}|}
\te_{i_1, j_1} \cdots \te_{i_s, j_s} \in U(\p)
\end{equation}
where the sum is taken over all $1 \le i_1,\dots,i_s,j_1,\dots,j_s \le N$ such that
\begin{itemize}
\item[(a)] $\col(j_1)-\col(i_1)+\dots+\col(j_s)-\col(i_s) + s = r$;
\item[(b)] $\col(i_t) \le \col(j_t)$ for each $t = 1,\dots,s$;
\item[(c)] if $\row(j_t) \ge i$, then $\col(j_t) < \col(i_{t+1})$ for each $t = 1,\dots,s-1$;
\item[(d)] if $\row(j_t) < i$ then $\col(j_t) \ge \col(i_{t+1})$ for each $t = 1,\dots,s-1$;
\item[(e)] $\row(i_1) = i$, $\row(j_s) = i$;
\item[(f)] $\row(j_t) = \row(i_{t+1})$ for each $t = 1,\dots,s-1$.
\end{itemize}
The expressions for the elements $E_i^{(r)} \in U(\p)$ and $F_i^{(r)} \in U(\p)$ are given
by similar formulas; see \cite[Section 9]{BKshift} or \cite[Section 4]{GTmin}.  Then we can define $E_{i,j}^{(r)} \in U(\p)$ and
$F_{i,j}^{(r)} \in U(\p)$ using \eqref{e:Eij}.  As a consequence of the PBW theorem for $U(\g,e)$,
the monomials in
\begin{align}
\label{e:finiteWpbw}
\begin{array}{c}
\{D_i^{(r)} \mid 1 \le i \le n,\ 1 \le r \le p_i\} \, \cup \, \{E_{i,j}^{(r)} \mid
1 \le i< j \le n,\ s_{i,j} \le r \le p_i+ s_{i,j}\}  \\ \cup \,
\{F_{i,j}^{(r)} \mid
1 \le i< j \le n,\ s_{j,i} \le r \le p_i+ s_{j,i}\}
\end{array}
\end{align}
taken in any fixed order form a basis of $U(\g,e)$, see \cite[Lemma~4.2]{GTmin}.

There are two filtrations of the $W$-algebra that we recall here.  First we consider the {\em loop filtration},
which is defined by taking the grading of $U(\p)$ given by the action of the cocharacter
$\mu$, and then the induced filtration $\bigcup_{r=0}^\infty \cF_r U(\g,e)$ of $U(\g,e)$.  We write
$\gr U(\g,e) \sub U(\p)$ for the associated graded algebra.

As a consequence of \cite[Lemma~4.2]{GTmin} we have $D_i^{(r+1)}, E_{i,j}^{(r+1)}, F_{i,j}^{(r+1)}
\in \cF_r U(\g,e)$ and
\begin{align} \label{e:Wgensloop}
\gr_r D_i^{(r+1)} &= (-1)^{r} (c_{i,i}^{(r)} + \eta(c_{i,i}^{(r)})), \\
\gr_r E_{i,j}^{(r+1)} &= (-1)^{r} c_{i,j}^{(r)}, \\
\gr_r F_{i,j}^{(r+1)} &= (-1)^{r} c_{j,i}^{(r)}.
\end{align}
It follows that the shift automorphism on $S_{-\eta} : U(\p) \to U(\p)$ defined by $x \to x - \eta(x)$
for $x \in \p$ restricts to an isomorphism
\begin{equation} \label{e:S-eta}
S_{-\eta}: \gr U(\g,e) \isoto U(\g^e).
\end{equation}

Next we consider the {\em Kazhdan filtration} of $U(\g,e)$.  This is first
defined on $U(\g)$ by placing $x \in \g(r)$ in Kazhdan degree
$r+1$, and as explained in \cite[Section 7]{GTmod},
the associated graded algebra can be identified with $S(\p)$.
We write $\bigcup_{r=0}^\infty \cF'_r U(\g,e)$ of $U(\g,e)$ for the
induced filtration on $U(\g,e)$, and $\gr' U(\g,e)$ for the associated graded
algebra.  Using \cite[Lemma~7.1]{GTmin} we identify $\gr' U(\g,e) = S(\p)^{\tw M}$,
where the twisted adjoint action of $M$ on $S(\p)$ is defined in analogy
with \eqref{e:twistM}.  Further \cite[Lemma~7.1]{GTmod} along with \cite[Lemma~4.2]{GTmin} imply
that the PBW generators $D_i^{(r)}$, $E_{i,j}^{(r)}$ and
$F_{i,j}^{(r)}$ given in \eqref{e:finiteWpbw}
lie in Kazhdan degree $r$ and that $\gr'_r D_i^{(r)}$, $\gr'_r E_{i,j}^{(r)}$ and
$\gr'_r F_{i,j}^{(r)}$ are algebraically independent
generators of $\gr' U(\g,e)$.

\subsection{The truncated shifted Yangian}
\label{ss:truncate}
Our next step is to recall an algebra isomorphism
$\phi$ from a truncation of
the shifted Yangian to the finite $W$-algebra.  This is done in Theorem~\ref{T:isoandPBW}, which also includes the PBW theorem for the truncation.
Although this was proved in \cite[Theorem~4.3]{GTmin}, drawing heavily on the results of \cite{BKshift}, we repeat a few of the details
here to demonstrate that the proof
can be simplified slightly by using the shifted current algebra.

The algebra homomorphism
\begin{equation}
\label{e:phi}
\tilde \phi : \Yns \to U(\g,e)
\end{equation}
is defined by sending the generators $E_i^{(r)}, D_i^{(r)}, F_i^{(r)}$
of $\Yns$ to the generators of $U(\g,e)$ with the same names.
Then we have that $\tilde \phi$ is surjective.  The fact that
this is a homomorphism justifies the abuse of notation in naming the generators of $\Yns$ and $U(\g,e)$.

The {\em truncated shifted Yangian} $\Ynls$
is defined to be the quotient of $\Yns$ by the ideal $I_{n,l}$ generated by the elements $D_1^{(r)}$ with $r > p_1$. It follows directly from formula \cite[(4.2)]{GTmin} that the element $D_1^{(r)}$ of $U(\g,e)$ is equal to zero for $r > p_1$, and so $\tilde \phi$ factors through the quotient
to give a surjection
\begin{equation} \label{e:tildephi}
\phi : \Ynls \to U(\g,e).
\end{equation}
For each element $E_{i,j}^{(r)}, D_i^{(r)}, F_{i,j}^{(r)} \in \Yns$ we write
$\dE_{i,j}^{(r)}, \dD_i^{(r)}, \dF_{i,j}^{(r)}$ for its image in $\Ynls$.

The loop filtration on $\Yns$
descends to a loop filtration on $\Ynls$.  We denote the filtered pieces
by $\cF_r \Ynls$ for $r \ge 0$, and write $\gr \Ynls$ for the associated graded algebra.

We are now ready to show that $\phi$ is an isomorphism and deduce the
PBW theorem for $\Ynls$.

\begin{Theorem}
\label{T:isoandPBW}
$ $
\begin{enumerate}
\item[(a)] $\phi : Y_{n,l}(\sigma) \to U(\g,e)$ is an isomorphism.
\item[(b)] The isomorphism $\tilde \psi : U(\cns) \isoto \gr \Yns$ given in \eqref{e:isogrTSY} induces an isomorphism
\begin{equation} \label{e:psi}
\psi :  U(\cnls) \isoto\gr \Ynls.
\end{equation}
Consequently, the ordered monomials in the elements
\begin{equation}
\label{e:shiftedtruncgens}
\begin{array}{c}
\{\dD_i^{(r)} \mid 1 \le i \le n, \ 0 < r \le p_i \} \cup \{\dE_{i,j}^{(r)} \mid 1 \le i  < j \le n, \ s_{i,j} < r \le s_{i,j} + p_i\} \\
\cup \, \{\dF_{i,j}^{(r)} \mid 1\le i < j \le n, \ s_{j, i} < r \le s_{j,i} + p_i\}
\end{array}
\end{equation}
taken in any fixed order form a basis of $\Ynls$.
\end{enumerate}
\end{Theorem}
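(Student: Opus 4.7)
The strategy is to pass to associated graded algebras with respect to the loop filtrations: I will establish (b) first by giving a concrete description of $\gr\Ynls$, and then deduce (a) from a standard filtration argument.

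Under $\tilde\psi: U(\cns) \isoto \gr\Yns$ from \eqref{e:isogrTSY}, the generators $D_1^{(r+1)}$ of $I_{n,l}$ (for $r \ge p_1$) have leading symbols $\gr_r D_1^{(r+1)}$ corresponding to the elements $e_{1,1}t^r$, which by the proof of Lemma~\ref{L:truncatedshiftedbasis} are precisely the generators of the Lie algebra ideal $\i_{n,l}\sub\cns$. Consequently $\tilde\psi$ sends the two-sided ideal $U(\cns)\i_{n,l}$ into $\gr I_{n,l}$, so it factors to a surjective homomorphism
\[
\psi : U(\cnls) = U(\cns)/(U(\cns)\i_{n,l}) \twoheadrightarrow \gr\Ynls.
\]

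To show $\psi$ is injective I will compose it with $\gr\phi$. Since $\phi$ is a surjective filtered homomorphism, it induces $\gr\phi : \gr\Ynls \twoheadrightarrow \gr U(\g,e)$, and composing with the shift identification $S_{-\eta}: \gr U(\g,e) \isoto U(\g^e)$ from \eqref{e:S-eta} gives a surjection $U(\cnls) \to U(\g^e)$. A direct comparison of generators using \eqref{e:Wgensloop} and the formula for $\tilde\psi$ shows this composition agrees, up to the sign automorphism $x \mapsto (-1)^{\deg(x)}x$ of $\cnls$, with the enveloping algebra homomorphism $U(\theta)$ induced by the isomorphism $\theta:\cnls \isoto \g^e$ of Lemma~\ref{L:truncatedshiftedbasis}(b). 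Hence the composition is an isomorphism, which forces both $\psi$ and $\gr\phi$ to be isomorphisms. The first claim of (b) is thereby established, and the basis \eqref{e:shiftedtruncgens} arises as a filtration-compatible lift of the PBW basis of $U(\cnls)$ coming from the Lie algebra basis \eqref{e:cslbasis}.

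For (a), since $\phi$ is a surjective filtered homomorphism whose associated graded is an isomorphism, a standard argument using the exhaustive loop filtration shows that $\phi$ itself is an isomorphism. The main obstacle circumvented here is the direct computation of relations in $\Ynls$ among the $D_1^{(r)}$ for $r > p_1$: the shifted current Lie algebra provides a clean identification $\gr\Ynls \cong U(\cnls)$ that bypasses the more intricate manipulations appearing in the earlier proof of \cite[Theorem~4.3]{GTmin}.
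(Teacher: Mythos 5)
Your proposal is correct and takes a genuinely (if subtly) different route to the same conclusion. The paper's proof also establishes the surjection $U(\cnls) \onto \gr\Ynls$ via the observation that $\i_{n,l} \sub \gr I_{n,l}$, but then argues at the level of PBW monomials: it shows these monomials span $\Ynls$ and that $\phi$ maps them to the known PBW basis of $U(\g,e)$ (from \cite[Theorem~7.2]{GTmod} and \cite[Lemma~4.2]{GTmin}), forcing them to be linearly independent, hence a basis. Since $\phi$ then maps a basis to a basis, part (a) follows. Your version replaces this basis-by-basis comparison with a map-theoretic one: you compose $\psi$ with $\gr\phi$ and the shift $S_{-\eta}$ to obtain a surjection $U(\cnls) \to U(\g^e)$, identify it (up to the sign automorphism) with $U(\theta)$ from Lemma~\ref{L:truncatedshiftedbasis}(b), and conclude that every map in the chain is an isomorphism. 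This is essentially the content of \eqref{e:yuck}, which the paper only records after Theorem~\ref{T:isoandPBW} as a clarifying remark; you promote it to the engine of the proof. Both approaches ultimately rest on the same external input --- the PBW theorem for $U(\g,e)$, which enters your argument through the identification $S_{-\eta}: \gr U(\g,e) \isoto U(\g^e)$ from \eqref{e:S-eta}, and enters the paper's argument through the explicit PBW monomial basis \eqref{e:finiteWpbw}. The trade-off is that your argument is slightly more structural and avoids explicit handling of linear independence, while the paper's is more elementary and gives the basis of $\Ynls$ more directly. One small remark: your final step for (a) uses that a surjective filtered map with bijective associated graded is an isomorphism; this is fine here because the loop filtration is exhaustive and bounded below, but it is worth stating those hypotheses explicitly.
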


\begin{proof}
Using $\tilde \psi$ from \eqref{e:isogrTSY} we identify $U(\cns)$ with $\gr \Yns$.
The associated graded ideal $\gr I_{n,l}$ contains the ideal $\i_{n,l}$ defined
in \S\ref{ss:truncations}, so there is a surjection $U(\cnls) \onto \gr \Ynls$.
It follows from Lemma~\ref{L:truncatedshiftedbasis}(a) and the PBW theorem for $U(\cnls)$
that $\gr \Ynls$ is spanned by the ordered monomials in the elements \eqref{e:shiftedtruncgens}.
Now the PBW theorem for $U(\g,e)$, given in \cite[Theorem~7.2]{GTmod}, along with \cite[Lemma~4.2]{GTmin}
imply that the images under $\phi$ of these spanning elements are linearly independent, and so they form a basis.
This proves (b).

We have seen that $\phi$ sends a basis of $\Ynls$ to a basis of $U(\g,e)$,
so that it is an isomorphism, and we get (a).
\end{proof}

It is helpful for us to give some notation for the PBW basis of $\Ynls$ given by Theorem~\ref{T:isoandPBW}(c).
We fix an order on the sets $\bJ_{F} = \{(i,j,r) \mid 1 \le i < j \le n, s_{j,i} < r \le s_{j,i}+ p_i\}$,
$\bJ_{D} = \{(i,r) \mid 1 \le i \le n, 0 < r \le p_i\}$ and $\bJ_{E} = \{(i,j,r) \mid 1 \le i < j \le n, s_{i,j} < r \le s_{i,j} + p_i\}$.
Let $\bI_F$ be the set of all tuples $\bu = (u_{i,j}^{(r)} \mid (i,j,r) \in \bJ_F)$ of non-negative integers,
$\bI_D$ be the set of all tuples $\bt = (t_i^{(r)} \mid (i,r) \in \bJ_D)$ of non-negative integers, and
$\bI_E$ be the set of all tuples $\bv = (v_{i,j}^{(r)} \mid (i,j,r) \in \bJ_E)$
of non-negative integers.
For $(\bu,\bt,\bv) \in \bI_F \times \bI_D \times \bI_E$, we define
$$
\dF^\bu \dD^\bt \dE^\bv = \prod_{(i,j,r) \in \bJ_{F}} (\dF_{i,j}^{(r)})^{u_{i,j}^{(r)}} \prod_{(i,r) \in \bJ_{D}} (\dD_i^{(r)})^{t_i^{(r)}}
\prod_{(i,j,r) \in \bJ_{E}} (\dE_{i,j}^{(r)})^{v_{i,j}^{(r)}},
$$
where the products respect the orders which we have fixed on $\bJ_{F}$, $\bJ_{D}$ and $\bJ_{E}$.
So that
\begin{equation} \label{e:YnlsPBWbasis}
\{\dF^\bu \dD^\bt \dE^\bv \mid (\bu,\bt,\bv) \in \bI_F \times \bI_D \times \bI_E\}
\end{equation}
is a basis of $\Ynls$.

We can see that the isomorphism $\phi$ in \eqref{e:phi} is filtered for the loop filtration, as
$D_i^{(r+1)}$, $E_{i,j}^{(r+1)}$ and $F_{i,j}^{(r+1)}$ have the same degree, namely $r$, when considered
as elements of $\Yns$ or as elements of $U(\g,e)$.  Thus we obtain
an isomorphism $\gr \phi : \gr \Ynls \isoto \gr U(\g,e)$.
We also have isomorphisms $\psi : U(\cnls) \isoto \gr \Ynls$ from \eqref{e:psi}
and  $U(\theta) : U(\cnls) \isoto U(\g^e)$ given by
Lemma~\ref{L:truncatedshiftedbasis}, and we have the isomorphism $S_{-\eta} : \gr U(\g,e) = U(\g^e)$.
We note however that as isomorphisms $\gr \Ynls \isoto U(\g^e)$, we have
$U(\theta) \circ \psi^{-1} \ne S_{-\eta} \circ \gr \phi$.
To explain this we note the adjoint action of $\mu(-1)$ gives an
automorphism $U(\g^e) \to U(\g^e)$, which is determined by
$c_{i,j}^{(r)} \mapsto (-1)^r c_{i,j}^{(r)}$; here
we recall that $\mu$ is the cocharacter defining the good grading on $\g$.
Then we have
\begin{equation} \label{e:yuck}
\Ad(\mu(-1)) \circ U(\theta) \circ \psi^{-1} = S_{-\eta} \circ \gr \phi.
\end{equation}
For Theorem~\ref{T:centreY}, we need to fix an isomorphism between
$\gr \Ynls \isoto U(\g^e)$.  For consistency with \cite{BB}, we use
\begin{equation}
 \label{e:grYnlsge}
S_{-\eta} \circ \gr \phi : \gr \Ynls \isoto U(\g^e)
\end{equation}
which is determined by its effect on the generators as follows
\begin{align*}
\gr_{r} \dD_i^{(r+1)} &\mapsto (-1)^r c_{i,i}^{(r)}\\
 \gr_{r} \dE_{i,j}^{(r+1)} &\mapsto (-1)^r c_{i,j}^{(r)}\\
\gr_{r} \dF_{i,j}^{(r+1)} &\mapsto (-1)^r c_{j,i}^{(r)}
\end{align*}

\subsection{The integral forms of $\Yns$ and $\Ynls$} \label{ss:Zforms}
We introduce and study the integral (truncated) shifted Yangian.
There are two natural approaches: we can consider the subring of the complex (truncated) shifted Yangian
generated by the elements listed in \eqref{e:Ygens};
or we can consider the ring determined by these generators and the relations in \cite[Theorem 4.15]{BT}
(along with the relations $D_1^{(r)} = 0$ for $r > p_1$).
Lemmas~\ref{L:integralshiftedPBW} and \ref{L:integraltruncatedPBW}
say that these two approaches lead to isomorphic rings.
As explained by Corollary~\ref{C:redmodp} this allows us to apply reduction modulo $p$ to certain formulas
in the complex truncated shifted Yangian, which will be useful later on.

Let $A$ be a commutative ring.  We define
the {\em shifted $A$-Yangian} $Y_n^A(\sigma)$ to be the $A$-algebra
with generators given in \eqref{e:Ygens} subject to the relations in \cite[Theorem~4.15]{BT}.
Here we are only concerned with the cases where $A = \Z$, $\C$ or $\kk$.  We note that
$Y_n^\kk(\sigma) = \Yns$, and that $Y_n^\C(\sigma)$ is the usual complex shifted Yangian,
as considered in \cite{BKshift} and \cite{BKrep}.
We mildly abuse notation by viewing the elements in \eqref{e:Ygens} simultaneously
as elements of $Y_n^\Z(\sigma)$,  $Y_n^\C(\sigma)$ and $\Yns$.

There is a ring homomorphism $Y_n^\Z(\sigma) \to Y_n^\C(\sigma)$
sending a generator of $Y_n^\Z(\sigma)$ to the element of $Y_n^\C(\sigma)$ with the same name.  This induces
a ring homomorphism $Y_n^\Z(\sigma) \otimes_\Z \C \to Y_n^\C(\sigma)$.  Similarly,
there is a natural map $Y_n^\Z(\sigma) \otimes_\Z \kk \to Y_n(\sigma)$.

\begin{Lemma}
\label{L:integralshiftedPBW}
$ $
\begin{enumerate}
\item[(a)] $Y_n^\Z(\sigma)$ is a free $\Z$-module with basis given by ordered monomials in the elements
given in \eqref{e:pbwgens}.
\item[(b)] The homomorphism $Y_n^\Z(\sigma) \to Y_n^\C(\sigma)$ is injective and the induced homomorphism
$Y_n^\Z(\sigma) \otimes_\Z \C \isoto Y_n^\C(\sigma)$ is an isomorphism.
\item[(c)] The homomorphism $Y_n^\Z(\sigma) \otimes_\Z \kk \isoto Y_n(\sigma)$ is an isomorphism.
\end{enumerate}
\end{Lemma}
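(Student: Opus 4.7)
The plan is to mirror the characteristic $p$ PBW argument of \cite[Lemma~4.13]{BT} working now over $\Z$, and then deduce (b) and (c) by comparing bases. The heart of the matter is showing that the ordered monomials in \eqref{e:pbwgens} form a $\Z$-basis of $Y_n^\Z(\sigma)$; the spanning is a filtration argument, and linear independence is inherited from characteristic zero.

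For spanning, I would equip $Y_n^\Z(\sigma)$ with the loop filtration, placing $D_i^{(r+1)}$, $E_i^{(r+1)}$ and $F_i^{(r+1)}$ in degree $r$. Since the relations of \cite[Theorem~4.15]{BT} have integer coefficients and are compatible with this filtration (exactly as in the $\kk$-case), this is well-defined. Mimicking the proof of \cite[Lemma~4.13]{BT} essentially verbatim yields a surjective homomorphism of graded $\Z$-algebras $\tilde\psi_\Z : U_\Z(\cns_\Z) \onto \gr Y_n^\Z(\sigma)$ sending $e_{i,i}t^r, e_{i,j}t^r, e_{j,i}t^r$ to the symbols $\gr_r D_i^{(r+1)}, \gr_r E_{i,j}^{(r+1)}, \gr_r F_{i,j}^{(r+1)}$ respectively, where $U_\Z(\cns_\Z)$ denotes the universal enveloping algebra of the Lie ring $\cns_\Z$ introduced in Remark~\ref{R:cZform}. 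Since $\cns_\Z$ is a free $\Z$-module with basis \eqref{e:csbasis}, the classical PBW theorem for enveloping algebras of Lie rings free over $\Z$ (Bourbaki, Lie I, \S2.7) gives that $U_\Z(\cns_\Z)$ is free over $\Z$ on ordered monomials in this basis. Consequently, $\gr Y_n^\Z(\sigma)$, and hence $Y_n^\Z(\sigma)$ itself, is spanned over $\Z$ by the ordered monomials in the elements \eqref{e:pbwgens}.

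For linear independence, consider the canonical ring homomorphism $Y_n^\Z(\sigma) \to Y_n^\C(\sigma)$ sending each generator to its namesake. This map carries an ordered PBW monomial to the ordered PBW monomial of the same name, and the latter collection is a $\C$-basis of $Y_n^\C(\sigma)$ by the classical PBW theorem \cite[Corollary~6.3]{BKshift}. Any $\Z$-linear dependence among PBW monomials in $Y_n^\Z(\sigma)$ would therefore descend to a $\C$-linear dependence in $Y_n^\C(\sigma)$, forcing all coefficients to vanish. This proves (a), and as a by-product shows that $Y_n^\Z(\sigma) \to Y_n^\C(\sigma)$ is injective. Part (b) then follows because the $\C$-linear extension $Y_n^\Z(\sigma) \otimes_\Z \C \to Y_n^\C(\sigma)$ sends the $\C$-basis of ordered PBW monomials supplied by (a) bijectively to the $\C$-basis of the target. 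Part (c) is identical with $\kk$ in place of $\C$: by (a) the ordered monomials form a $\kk$-basis of $Y_n^\Z(\sigma) \otimes_\Z \kk$, and by \cite[Lemma~4.13]{BT} they form a $\kk$-basis of $\Yns$, so the natural map is an isomorphism. The principal technical obstacle is verifying that all coefficients and lower-order corrections appearing in the relations of \cite[Theorem~4.15]{BT} remain in $\Z$, so that no denominators appear when carrying out the inductive definitions \eqref{e:Eij} or the filtered form of the defining relations; this is implicit in the $\kk$-level PBW argument but must be traced through carefully to confirm that $\tilde\psi_\Z$ is genuinely defined over $\Z$.
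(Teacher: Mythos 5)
Your proof is correct and follows essentially the same route as the paper's: establish spanning of $Y_n^\Z(\sigma)$ by ordered PBW monomials via the loop filtration and a surjection from $U(\cns_\Z)$ onto the associated graded ring (transplanting the argument of \cite{BT}), then obtain linear independence by pushing forward to $Y_n^\C(\sigma)$ and invoking the complex PBW theorem, and finally read off (b) and (c) by matching bases on both sides of the respective comparison maps. A few small notes: the complex PBW theorem you want for the full shifted Yangian $Y_n^\C(\sigma)$ is \cite[Theorem~2.1]{BKshift} rather than \cite[Corollary~6.3]{BKshift} (the latter is for the truncated version and is used in Lemma~\ref{L:integraltruncatedPBW}), and the $\kk$-level PBW theorem cited for (c) is \cite[Theorem~4.14]{BT}; your worry in the last sentence about denominators is not a genuine obstacle, since the defining relations in \cite[Theorem~4.15]{BT} already have integer coefficients and the elements in \eqref{e:Eij} are iterated brackets, so integrality holds by construction.
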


\begin{proof}
As introduced in Remark~\ref{R:cZform} we write the $\cns_\Z$ for
$\Z$-form of $\cns$.
The argument in the penultimate paragraph of the proof of
\cite[Theorem~4.3]{BT} can be applied verbatim to show that there is a surjection
$U(\cns_\Z) \onto Y_n^\Z(\sigma)$: to apply this argument it is
necessary to define a loop filtration on $Y_n^\Z(\sigma)$, which can be done
by placing $E_{i,j}^{(r+1)}, D_i^{(r+1)}, F_{i,j}^{(r+1)}$ in degree $r$.
We can now deduce that the PBW monomials in the elements in \eqref{e:pbwgens} form a
spanning set of $Y_n^\Z(\sigma)$ over $\Z$.
By \cite[Theorem~2.1]{BKshift} these monomials are sent to $\C$-linearly
independent elements of $Y_n^\C(\sigma)$ under the map
$Y_n^\Z(\sigma) \to Y_n^\C(\sigma)$.  Therefore, they are certainly $\Z$-linearly independent in $Y_n^\Z(\sigma)$.
This proves (a).  Also we have shown that $Y_n^\Z(\sigma) \to Y_n^\C(\sigma)$ sends a $\Z$-basis to a $\C$-basis, which implies (b).

Thanks to \cite[Theorem~4.14]{BT}, ordered monomials in the elements in \eqref{e:pbwgens} form
a $\kk$-basis of $\Yns$.  Thus the map $Y_n^\Z(\sigma) \otimes_\Z \kk \to \Yns$ sends to a $\kk$-basis of
$Y_n^\Z(\sigma) \otimes_\Z \kk$ to a $\kk$-basis of $\Yns$, and we obtain (c).
\end{proof}

We also want an analogue of Lemma~\ref{L:integralshiftedPBW}
in the context of truncated shifted Yangians.  To do this we first define the {\em truncated shifted $A$-Yangian}
$Y_{n,l}^A(\sigma)$ to be the quotient of $Y_n^A(\sigma)$ by the ideal generated by $\{D_1^{(r)} \mid r > p_1\}$.
Similarly to the non-truncated case we have maps $Y_{n,l}^\Z(\sigma) \to Y_{n,l}^\C(\sigma)$,
$Y_{n,l}^\Z(\sigma) \otimes_\Z \C \to Y_{n,l}^\C(\sigma)$ and $Y_{n,l}^\Z(\sigma) \otimes_\Z \kk \to Y_{n,l}(\sigma)$.

\begin{Lemma}  \label{L:integraltruncatedPBW}
$ $
\begin{enumerate}
\item[(a)] $Y_{n,l}^\Z(\sigma)$ is a free $\Z$-module with basis given in \eqref{e:YnlsPBWbasis}.
\item[(b)] The homomorphism $Y_{n,l}^\Z(\sigma) \to Y_{n,l}^\C(\sigma)$ is injective and the induced homomorphism
$Y_{n,l}^\Z(\sigma) \otimes_\Z \C \isoto Y_{n,l}^\C(\sigma)$ is an isomorphism.
\item[(c)] The homomorphism $Y_{n,l}^\Z(\sigma) \otimes_\Z \kk \isoto Y_{n,l}(\sigma)$ is an isomorphism.
\end{enumerate}
\end{Lemma}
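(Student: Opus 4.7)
The plan is to mirror the proof of Lemma~\ref{L:integralshiftedPBW}, but using the truncated shifted current algebra $\cnls_\Z$ from Remark~\ref{R:cZform} in place of $\cns_\Z$. The role of the complex PBW theorem will be played by the PBW theorem for $Y_{n,l}^\C(\sigma)$ from \cite[Section~6]{BKshift}, and the role of Theorem~4.14 of \cite{BT} will be played by Theorem~\ref{T:isoandPBW}(b).

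First I would equip $Y_{n,l}^\Z(\sigma)$ with the loop filtration by placing $E_{i,j}^{(r+1)}$, $D_i^{(r+1)}$, $F_{i,j}^{(r+1)}$ in degree $r$; this is well defined because the relations in \cite[Theorem~4.15]{BT} are compatible with this filtration and the additional relations $D_1^{(r)}=0$ for $r>p_1$ are homogeneous. Repeating the argument from the penultimate paragraph of the proof of \cite[Theorem~4.3]{BT} in the truncated setting produces a surjection $U(\cnls_\Z) \onto \gr Y_{n,l}^\Z(\sigma)$. Combined with the integral PBW basis for $\cnls_\Z$ recorded in Remark~\ref{R:cZform}, a standard filtration argument gives a spanning set of $Y_{n,l}^\Z(\sigma)$ over $\Z$ consisting of the ordered monomials in \eqref{e:YnlsPBWbasis}.

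To get linear independence, I would send these monomials to $Y_{n,l}^\C(\sigma)$ via the natural homomorphism and invoke the complex PBW theorem \cite[Theorem~6.2]{BKshift}, which asserts that the very same monomials form a $\C$-basis. $\Z$-linear independence in $Y_{n,l}^\Z(\sigma)$ follows, establishing (a). Since the map $Y_{n,l}^\Z(\sigma) \to Y_{n,l}^\C(\sigma)$ sends a $\Z$-basis to a $\C$-basis, both assertions of (b) are immediate. For (c), the same monomials form a $\kk$-basis of $Y_{n,l}(\sigma)$ by Theorem~\ref{T:isoandPBW}(b), so the natural homomorphism $Y_{n,l}^\Z(\sigma) \otimes_\Z \kk \to Y_{n,l}(\sigma)$ sends a $\kk$-basis to a $\kk$-basis, hence is an isomorphism.

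The only subtle point is checking that the loop-filtration argument giving the surjection $U(\cnls_\Z) \onto \gr Y_{n,l}^\Z(\sigma)$ really does survive the truncation over $\Z$; everything else is a transcription of the non-truncated case. This reduces to verifying that the image of the truncating ideal $\langle D_1^{(r)} \mid r>p_1\rangle$ in $\gr Y_n^\Z(\sigma)$ contains the ideal $\i_{n,l} \subseteq U(\cns_\Z)$ cut out in \S\ref{ss:truncations}, which follows by the same bracket computation used in the proof of Lemma~\ref{L:truncatedshiftedbasis}(a), all of whose structure constants are integral.
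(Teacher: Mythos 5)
Your proof is correct and follows essentially the same approach as the paper: it invokes Remark~\ref{R:cZform} for the integral form $\cnls_\Z$, applies the filtration argument from the start of Theorem~\ref{T:isoandPBW} to surject $U(\cnls_\Z)$ onto $\gr Y_{n,l}^\Z(\sigma)$, and then completes the argument by mirroring Lemma~\ref{L:integralshiftedPBW}, using the complex PBW theorem from \cite[Section~6]{BKshift} and Theorem~\ref{T:isoandPBW}(b). Your closing remark about why the truncating ideal behaves integrally is a reasonable check but is already implicit in the paper's appeal to the beginning of Theorem~\ref{T:isoandPBW}'s proof together with Remark~\ref{R:cZform}.
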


\begin{proof}
Recall that  $\cnls_\Z$ is defined in Remark~\ref{R:cZform}.
The argument at the start of the
proof of Theorem~\ref{T:isoandPBW} can be applied to show that
$U(\cnls_\Z)$ surjects onto $\gr Y_{n,l}^\Z(\sigma)$.

Now we can complete the proof of the
current lemma using the same steps as in the proof of Lemma \ref{L:integralshiftedPBW},
employing the PBW theorems for $Y_{n,l}^\C(\sigma)$ and $Y_{n,l}(\sigma)$,
which are given in \cite[Corollary 6.3]{BKshift} and Theorem~\ref{T:isoandPBW}.
\end{proof}

Thanks to the previous lemma
we can employ reduction modulo $p$ to deduce formulas in $Y_{n,l}(\sigma)$ from certain
types of formulas in $Y_{n,l}^\C(\sigma)$, as explained by the following corollary.

\begin{Corollary} \label{C:redmodp}
Let $h$ be a polynomial with coefficients in $\Z$ in the
non-commuting indeterminates
$\{f_i^{(r)} \mid 1 \le i < n, r > s_{i+1,i}\} \cup \{d_j^{(r)} \mid 1 \le j \le n, r >0\} \cup
\{e_i^{(r)} \mid 1 \le i < n, r > s_{i,i+1}\}$, and let $A$ be a ring.
Write $H^A$ for the element of $Y_{n,l}^A(\sigma)$ obtained by specialising $h$ via $d_i^{(r)} \mapsto \dD_i^{(r)}$, \, $e_i^{(r)} \mapsto \dE_i^{(r)}$ and $f_i^{(r)} \mapsto \dF_i^{(r)}$.
\begin{enumerate}
\item[(a)] Suppose that $H^\C = 0$.  Then $H^\kk = 0$.
\item[(b)] Suppose that $H^\C \in \cF_r Y_{n,l}^\C(\sigma)$.
Then  $H^\kk \in \cF_r Y_{n,l}(\sigma)$.
\end{enumerate}
\end{Corollary}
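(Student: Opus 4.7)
The plan is to deduce both parts from Lemma~\ref{L:integraltruncatedPBW} by working in the integral form $Y_{n,l}^\Z(\sigma)$ and using the fact that its PBW monomials remain a basis after base change to either $\C$ or $\kk$.

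First I would form the element $H^\Z \in Y_{n,l}^\Z(\sigma)$ obtained by specialising $h$ to the generators $\dD_i^{(r)}, \dE_i^{(r)}, \dF_i^{(r)}$ of $Y_{n,l}^\Z(\sigma)$. Under the canonical maps $Y_{n,l}^\Z(\sigma) \to Y_{n,l}^\C(\sigma)$ and $Y_{n,l}^\Z(\sigma) \to Y_{n,l}^\Z(\sigma) \otimes_\Z \kk \isoto Y_{n,l}(\sigma)$, $H^\Z$ clearly maps to $H^\C$ and $H^\kk$ respectively, since both maps are ring homomorphisms sending corresponding generators to corresponding generators.

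For part (a), if $H^\C = 0$ then $H^\Z$ lies in the kernel of $Y_{n,l}^\Z(\sigma) \to Y_{n,l}^\C(\sigma)$, which is zero by Lemma~\ref{L:integraltruncatedPBW}(b). Hence $H^\Z = 0$, and applying the homomorphism to $Y_{n,l}(\sigma)$ gives $H^\kk = 0$.

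For part (b), the key point is that the loop filtrations on $Y_{n,l}^\Z(\sigma)$, $Y_{n,l}^\C(\sigma)$ and $Y_{n,l}(\sigma)$ are all compatible with the PBW basis of Lemma~\ref{L:integraltruncatedPBW}(a), because each generator $\dD_i^{(r+1)}, \dE_{i,j}^{(r+1)}, \dF_{i,j}^{(r+1)}$ is placed in loop degree $r$ in each version. Consequently $\cF_r Y_{n,l}^\Z(\sigma)$ is the free $\Z$-span of those PBW monomials of total loop degree $\leq r$, and $\cF_r Y_{n,l}^\Z(\sigma) \otimes_\Z \C = \cF_r Y_{n,l}^\C(\sigma)$ and $\cF_r Y_{n,l}^\Z(\sigma) \otimes_\Z \kk = \cF_r Y_{n,l}(\sigma)$. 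Expanding $H^\Z = \sum c_\bw m_\bw$ in the integral PBW basis, the image $H^\C = \sum c_\bw m_\bw$ in $Y_{n,l}^\C(\sigma)$ lies in $\cF_r$ iff every $m_\bw$ with $c_\bw \ne 0$ has loop degree $\leq r$ (since the $m_\bw$ are a $\C$-basis adapted to the filtration). Thus $H^\Z \in \cF_r Y_{n,l}^\Z(\sigma)$, and reducing modulo $p$ gives $H^\kk \in \cF_r Y_{n,l}(\sigma)$.

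There is no real obstacle here: the whole argument is a formal consequence of the three assertions in Lemma~\ref{L:integraltruncatedPBW} together with the observation that the loop filtration admits an integral version adapted to the PBW basis. The only thing one needs to be slightly careful about is to verify that the PBW basis from Lemma~\ref{L:integraltruncatedPBW}(a) is compatible with the loop filtration, but this was already implicit in the proof of that lemma (via the surjection $U(\cnls_\Z) \onto \gr Y_{n,l}^\Z(\sigma)$).
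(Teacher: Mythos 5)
Your proposal is correct and takes essentially the same approach as the paper: part (a) follows from the injectivity of $Y_{n,l}^\Z(\sigma) \to Y_{n,l}^\C(\sigma)$ (Lemma~\ref{L:integraltruncatedPBW}(b)), and part (b) follows by expanding $H^\Z$ in the integral PBW basis and observing that the loop filtration is adapted to that basis in all three rings. You have simply spelled out more explicitly the step the paper summarises as ``the filtered degree for the loop filtration can be read off directly from these expressions.''
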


\begin{proof}
By Lemma~\ref{L:integraltruncatedPBW}(b) we can view $Y_{n,l}^\Z(\sigma) \sub Y_{n,l}^\C(\sigma)$.
Then we have that $H^\C = H^\Z \in Y_{n,l}^\Z(\sigma)$, so that
$H^\Z = 0$.  Further, under the identification $\Ynls \iso Y_{n,l}^\Z(\sigma) \otimes_\Z \kk$,
given by Lemma~\ref{L:integraltruncatedPBW}(c),
we have $H^\kk = H^\Z \otimes 1$.  Hence, $H^\kk = 0$, and this proves (a)

Using Lemma~\ref{L:integraltruncatedPBW}(a) may write $H^\Z \in Y_{n,l}^\Z(\sigma)$ as a $\Z$-linear combination
of the PBW basis given in \eqref{e:YnlsPBWbasis}
given by monomials in the elements from \eqref{e:shiftedtruncgens} in some fixed order.
This also gives the expression for $H^\C$ in terms of this PBW basis
in $Y_{n,l}^\C(\sigma)$ and for $H^\kk$ in terms of this PBW basis in $\Ynls$.
Furthermore, the filtered degree for the loop filtration can be read off directly
from these expressions, which implies (b).
\end{proof}

The observations of the previous lemma will be convenient
for us at several places later in this paper.  However we should mention that
we expect that the formulas which we verify using this approach can also be established
over $\kk$ by repeating the known methods over $\C$. Thus the reduction modulo $p$
procedure may be viewed as a convenient alternative to reciting certain
technical arguments from characteristic zero.

We end this subsection by explaining that some parts of the
the theory of $U(\g,e)$ from \S\ref{ss:finiteW} can be carried out over $\Z$.
Let $\p_\Z$ be the parabolic subalgebra $\g_\Z = \gl_n(\Z)$ such that $\p = \p_\Z \otimes_\Z \kk$.
The element $\chi \in \g^*$, can be viewed as a function from $\g_\Z \to \Z$, and then
we can define a projection
\begin{equation} \label{e:prZ}
\pr_\Z : U(\g_\Z) \to U(\p_\Z)
\end{equation}
in analogy with $\pr$
as defined in \eqref{e:pr}.

The isomorphism $\phi : \Ynls \isoto U(\g,e)$ from \eqref{e:phi} can be thought
of as an embedding $\phi : \Ynls \hookrightarrow U(\p)$.
By considering the formulas for of the twisted $M$-invariants $D_i^{(r)}, E_i^{(r)}, F_i^{(r)} \in U(\p)$ given in
\cite[Section 9]{BKshift}, see also \cite[Section 4]{GTmin},
we see that they can be viewed as an elements of $U(\p_\Z)$.  Therefore,
we can consider the ring homomorphism defined in the obvious manner
\begin{equation} \label{e:phiZ}
\phi_\Z : Y_{n,l}^\Z(\sigma) \to U(\p_\Z).
\end{equation}

We also note here that the procedure of the reduction modulo $p$ given by
Corollary~\ref{C:redmodp} has an obvious analogue with
$U(\p_\C)$, $U(\p)$ and $U(\p_\Z)$ in place of  $Y_{n,l}^\C(\sigma)$, $\Ynls$
and $Y_{n,l}^\Z(\sigma)$; these observations will be vital in the proof of
Lemma~\ref{L:HClemmaUg2}.

\subsection{The $T_{i,j}^{(r)}$ generators for $\Ynls$}
We introduce some alternative PBW generators, which will be important later.
They were described in \cite[Section~2.2]{BKrep} over $\C$.  We recap the details for the readers convenience.

Let $u$ be an indeterminate, and consider the power series ring $\Yns[[u^{-1}]]$.
We adopt the convention $D_i^{(0)} = 1$ for all $i$
and define the power series
\begin{align} \label{e:powerseries}
D_i(u), E_{i,j}(u), F_{i,j}(u) \in \Yns[[u^{-1}]]
\end{align}
by setting $D_i(u) = \sum_{r\ge 0} D_i^{(r)} u^{-r}$, $E_{i,j}(u) = \sum_{r > s_{i,j} } E_{i,j}^{(r)} u^{-r}$ and $F_{i,j}(u) = \sum_{r > s_{j,i}} F_{i,j}^{(r)} u^{-r}$. By convention we also set $E_{i,i}(u) = F_{i,i}(u) = 1$.

Next we define the following $n \times n$ matrices with coefficients in $\Yns[[u^{-1}]]$:
\begin{itemize}
\item $D(u)$ is the diagonal matrix with $D(u)_{i,i} =D_i(u)$,
\item $E(u)$ is the upper unitriangular matrix with
$E(u)_{i,j}:=E_{i,j}(u)$ for $i \le j$,
\item $F(u)$ is the lower unitriangular matrix with
$F(u)_{i,j}:=F_{j,i}(u)$ for $i \ge j$.
\end{itemize}
Now define the matrix $T(u) = F(u)D(u)E(u)$, whose $(i,j)$-entry can be written as a power series
\begin{equation} \label{e:Tij}
T_{i,j}(u) = \sum_{r \ge 0} T_{i,j}^{(r)} u^{-r} := \sum_{k=1}^{\min(i,j)} F_{k,i}(u) D_k(u) E_{k, j}(u)
\end{equation}
for some elements $T_{i,j}^{(r)} \in \Yns$. The image of $T_{i,j}^{(r)}$ in $\Ynls$ will be denoted $\dT_{i,j}^{(r)}$.

By direct calculation we easily see that $T_{i,j}^{(0)} = \delta_{i,j}$ and $T_{i,j}^{(r)} = 0$ for $0 < r \le s_{i,j}$.
Also we can see that $T_{i,j}^{(r+1)} \in \cF_r \Yns$ and then using the isomorphism $\tilde \psi$ from \eqref{e:isogrTSY}
to identify $\gr \Yns \iso U(\cns)$ we have
$$
\gr_r T_{i,j}^{(r+1)} = e_{i,j} t^r.
$$
This allows us to deduce that the $T_{i,j}^{(r)}$ give alternative
PBW generators as stated in the next lemma; the version of these results
in characteristic zero are given in \cite[Lemmas 2.1 and 3.6]{BKrep}.

\begin{Lemma}
\label{L:propertiesofTij}
$ $
\begin{enumerate}
\item[(a)] The ordered monomials in the elements
$\{T_{i,j}^{(r)} \mid 1\le i, j \le n, \ s_{i,j} < r\}$ form a basis for $\Yns$.
\item[(b)] The ordered monomials in the elements
$\{\dT_{i,j}^{(r)} \mid 1\le i, j \le n, \ s_{i,j} < r \le s_{i,j} + p_{\min(i,j)}\}$ form a basis for $\Ynls$.
\end{enumerate}
\end{Lemma}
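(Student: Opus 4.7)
The plan is to deduce both claims by lifting PBW bases through the associated graded isomorphisms already established. All the essential computation has been done just before the statement, where it is shown that $T_{i,j}^{(r+1)} \in \cF_r \Yns$ and that $\gr_r T_{i,j}^{(r+1)} = e_{i,j} t^r$ under the identification $\tilde\psi : U(\cns) \isoto \gr \Yns$ from \eqref{e:isogrTSY}. This reduces the lemma to a standard filtration argument.

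For part (a), I would first note that for any fixed ordering of the index set $\{(i,j,r) \mid 1 \le i,j \le n,\ r > s_{i,j}\}$, the ordered monomials in the elements $\{e_{i,j}t^r \mid 1 \le i,j \le n,\ r \ge s_{i,j}\}$ form a $\kk$-basis of $U(\cns)$ by the PBW theorem, since \eqref{e:csbasis} is a basis of $\cns$. Under $\tilde\psi$, the monomial $e_{i_1,j_1}t^{r_1} \cdots e_{i_k,j_k}t^{r_k}$ is homogeneous of loop degree $r_1 + \dots + r_k$ and corresponds to $\gr_{r_1+\dots+r_k}(T_{i_1,j_1}^{(r_1+1)} \cdots T_{i_k,j_k}^{(r_k+1)})$. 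Therefore the images under $\tilde\psi$ of the PBW monomials in the $e_{i,j}t^r$ give a homogeneous basis of $\gr \Yns$ that is the associated graded of the ordered monomials in the $T_{i,j}^{(r)}$. A standard filtration lifting argument (identical in content to Lemma~\ref{L:filtfree}, applied to $\Yns$ regarded as a module over the ground field) then shows that the monomials in the $T_{i,j}^{(r)}$ form a basis of $\Yns$.

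For part (b), I would run the same argument one level further. By Theorem~\ref{T:isoandPBW}(b) the map $\psi : U(\cnls) \isoto \gr \Ynls$ is an isomorphism, and it is induced from $\tilde\psi$, so $\gr_r \dT_{i,j}^{(r+1)}$ corresponds under $\psi$ to the basis element $e_{i,j}t^r + \i_{n,l}$ of $\cnls$ whenever $s_{i,j} < r+1 \le s_{i,j} + p_{\min(i,j)}$ (and vanishes outside this range, in view of Lemma~\ref{L:truncatedshiftedbasis}(a)). The basis \eqref{e:cslbasis} of $\cnls$ produces, by the PBW theorem, a homogeneous basis of $U(\cnls)$ consisting of ordered monomials in the $e_{i,j}t^r + \i_{n,l}$ with $s_{i,j} \le r < s_{i,j} + p_{\min(i,j)}$. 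Transporting through $\psi$ and lifting via the same filtration argument as in (a) yields the claimed basis of $\Ynls$.

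There is no real obstacle here; the lemma is essentially a corollary of the fact that the transition between the two systems of generators is unitriangular with respect to the loop filtration, together with the PBW theorems for $U(\cns)$ and $U(\cnls)$ already in hand. The only small bookkeeping point to verify is that the index ranges $s_{i,j} < r$ (respectively $s_{i,j} < r \le s_{i,j} + p_{\min(i,j)}$) match exactly the nonzero basis elements of $\cns$ (respectively $\cnls$), which is immediate from \eqref{e:csbasis} and Lemma~\ref{L:truncatedshiftedbasis}(b).
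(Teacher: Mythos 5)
Your proof is correct and follows exactly the approach the paper suggests: the paper states the key computation $\gr_r T_{i,j}^{(r+1)} = e_{i,j}t^r$ (and $T_{i,j}^{(r)} = 0$ for $0 < r \le s_{i,j}$) just before the lemma and then asserts, without writing out a proof, that "this allows us to deduce that the $T_{i,j}^{(r)}$ give alternative PBW generators." You have simply made explicit the filtration lifting argument that the paper leaves to the reader, transporting the PBW bases of $U(\cns)$ and $U(\cnls)$ through $\tilde\psi$ and $\psi$ and using that these enveloping algebras are domains (so top symbols of products are products of top symbols) together with the analogue of Lemma~\ref{L:filtfree}. The parenthetical remark in part (b) about $e_{i,j}t^r + \i_{n,l}$ vanishing for $r \ge s_{i,j}+p_{\min(i,j)}$ is true but not needed for the lifting argument; also note it gives $\gr_r\dT_{i,j}^{(r+1)} = 0$, which is weaker than the later Corollary~\ref{C:Tij0} and should not be confused with it.
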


The next result is obtained as application of reduction modulo $p$, using Corollary
\ref{C:redmodp}.

\begin{Corollary}
\label{C:Tij0}
$\dot T_{i,j}^{(r)} = 0$ in $\Ynls$ for $r > p_{\min(i,j)}+s_{i,j}$
\end{Corollary}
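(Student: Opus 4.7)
The plan is to apply the reduction modulo $p$ procedure of Corollary~\ref{C:redmodp}(a), deducing the statement from its analogue over $\C$.

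First I would express each $T_{i,j}^{(r)}$ as a fixed polynomial $h$ with integer coefficients in the Drinfeld-type generators $D_i^{(s)}, E_i^{(s)}, F_i^{(s)}$. By the inductive formulas \eqref{e:Eij}, each $E_{i,j}^{(s)}$ and $F_{i,j}^{(s)}$ is an iterated commutator in the $E_i^{(s')}$ and $F_i^{(s')}$, hence a $\Z$-polynomial in the generators. Extracting the coefficient of $u^{-r}$ in the defining power series identity \eqref{e:Tij} then writes $T_{i,j}^{(r)}$ as such a polynomial. In the notation of Corollary~\ref{C:redmodp} this gives $H^\C = T_{i,j}^{(r)} \in Y_{n,l}^\C(\sigma)$ and $H^\kk = \dot T_{i,j}^{(r)} \in \Ynls$.

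Next I would invoke the characteristic zero vanishing $T_{i,j}^{(r)} = 0$ in $Y_{n,l}^\C(\sigma)$ for $r > s_{i,j} + p_{\min(i,j)}$. This is part of the Brundan--Kleshchev analysis of the complex truncated shifted Yangian, established as an ingredient in the proof of their PBW theorem \cite[Corollary 6.3]{BKshift}.

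Combining these two observations gives $H^\C = 0$, whereupon Corollary~\ref{C:redmodp}(a) immediately yields $H^\kk = \dot T_{i,j}^{(r)} = 0$ in $\Ynls$, as required. The substantive content of the argument is packaged into the integral PBW theorem of \S\ref{ss:Zforms} together with the characteristic zero input; the only real obstacle is verifying that the cited characteristic zero vanishing is available in the precise form needed, but this is standard. Avoiding reduction modulo $p$ would force one to reprove that vanishing directly in positive characteristic, reproducing a substantial portion of \cite[Section 6]{BKshift}, which is precisely why the reduction modulo $p$ approach is preferred here.
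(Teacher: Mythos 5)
Your proposal is correct and follows exactly the paper's route: express $T_{i,j}^{(r)}$ as a $\Z$-polynomial in the Drinfeld generators, cite the characteristic zero vanishing in $Y_{n,l}^\C(\sigma)$, and invoke Corollary~\ref{C:redmodp}(a). The only small discrepancy is the citation — the paper attributes the characteristic zero statement directly to \cite[Theorem~3.5]{BKrep} rather than to \cite[Corollary~6.3]{BKshift} (whose proof does not explicitly isolate this vanishing), but this does not affect the substance of the argument.
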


\begin{proof}
The version of this statement in $Y_{n,l}^\C(\sigma)$ is
\cite[Theorem 3.5]{BKrep}.  Now we can apply Corollary
\ref{C:redmodp}.
\end{proof}

\section{Centres and restricted versions} \label{S:cent}

In this section we study the centres of $Y_{n,l}(\sigma)$ and $U(\g,e)$. Both algebras
admit a natural definition of a Harish-Chandra centre and a $p$-centre arising in different
ways, and we show that in either case the centre is generated by these subalgebras.
We continue to use the notation from Section~\ref{S:YandW}.

\subsection{The centre of $\Yns$} \label{ss:centreshiftedYangian}
We proceed to recall the description of the centre of $Y_n(\sigma)$ given in \cite{BT}.
The power series $D_i(u)$ are defined in \eqref{e:powerseries}.
From these we define
\begin{align*}
C(u) = \sum_{r \ge 0} C^{(r)} u^{-r} &:= D_1(u) D_2(u-1) D_3(u-2) \cdots D_n(u-n+1).
\end{align*}
By \cite[Theorem 5.11(1)]{BT}, the elements in $\{C^{(r)} \mid r > 0\}$ are algebraically independent and
lie in the centre $Z(\Yns)$ of $\Yns$.
The subalgebra they generate
is called {\em the Harish-Chandra centre of $\Yns$}, and is denoted $Z_\HC(\Yns)$.

For $i=1,\dots,n$, we define
\begin{equation}
\label{e:Bir}
B_i(u) = \sum_{r \ge 0} B_i^{(r)}  := D_i(u) D_i(u-1) D_i(u-2) \cdots D_i(u-p+1).
\end{equation}
By \cite[Theorem 5.11(2)]{BT} the elements in
\begin{equation}
\label{e:pgenerators}
\begin{array}{c}\{B_i^{(rp)} \mid i = 1,\dots,n, r > 0\} \, \cup \\
\{(E_{i,j}^{(r)})^p \mid 1\le i < j \le n, r > s_{i,j}\} \cup \{(F_{i,j}^{(r)})^p \mid 1\le i< j \le n, r >s_{j,i}\}\end{array}
\end{equation}
are algebraically independent, and lie in $Z(\Yns)$.
The subalgebra they generate is called the {\em $p$-centre} of $\Yns$ and is denoted $Z_p(\Yns)$.
We note that by \cite[Theorem 5.8]{BT}, the elements $B_i^{(s)}$ can be written in terms of $B_i^{(rp)}$ for
$0 \le r \le \frac{s}{p}$, so in particular they lie in the $p$-centre of $\Yns$.
Furthermore by \cite[Theorems~5.1, 5.4, 5.8]{BT} we have
\begin{equation}
\label{e:degreeofBEF}
B_i^{((r+1)p)},
(E_{i,j}^{(r+1)})^p, (F_{i,j}^{(r+1)})^p \in \cF_{rp} \Yns,
\end{equation}
and
under the identification of $\gr \Yns \iso U(\cns)$ given by the isomorphism $\tilde \psi$
from \eqref{e:isogrTSY} we have
\begin{align} \label{e:tops}
\gr_{rp} B_i^{((r+1)p)} &= (e_{i,i}t^{r})^p - e_{i,i} t^{pr} \in Z_p(\cns); \nonumber  \\
\gr_{rp} (E_{i,j}^{(r+1)})^p &= (e_{i,j} t^{r})^p \in Z_p(\cns); \\
\gr_{rp} (F_{i,j}^{(r+1)})^p &= (e_{j,i} t^{r})^p \in Z_p(\cns). \nonumber
\end{align}
From this it follows that $Z_p(\Yns)$ is a polynomial algebra
over the generators given in \eqref{e:pgenerators}.

Though we do not require it in this paper we remark that
\cite[Theorem 5.11]{BT} contains more information about the centre of $\Yns$.
In particular, it is generated by $Z_{HC}(\Yns)$ and $Z_p(\Yns)$.
We also mention that \cite[Corollary 5.13]{BT} states that $\Yns$ is a free module
over $Z_p(\Yns)$ with basis given by the  by the ordered monomials in
the generators in \eqref{e:pbwgens} in which no exponent is $p$ or more;
we refer to such monomials as {\em $p$-restricted monomials}.

\subsection{The centre of the truncated shifted Yangian}
\label{ss:centreTSY}

In this subsection we prove Theorem~\ref{T:centreY}, giving a precise description of the centre
of $\Ynls$. As in \cite[Lemma 3.7]{BKrep} we define the Laurent series
\begin{equation}
\label{e:defineZu}
Z(u) = \sum_{r \ge 0} Z_r u^{N-r} := u^{p_1} (u-1)^{p_2} \cdots (u-(n-1))^{p_n} C(u) \in \Yns((u^{-1})).
\end{equation}
Following the convention established in \S\ref{ss:Yangian} we use the
notation $\dB_i^{(r)}, \dC^{(r)}, \dZ_r$
to denote the images of
$B_i^{(r)}, C^{(r)}, Z_r
\in \Yns$ in the quotient $\Ynls$; similarly we use the
power series notation $\dC(u), \dZ(u)$.

\begin{Lemma}
$\dZ(u) = u^N + \sum_{r=0}^N \dZ_r u^{N-r} \in \Ynls[u]$ is a polynomial in $u$ of degree $N$.
\end{Lemma}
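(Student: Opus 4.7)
My plan is to deduce the result from its characteristic zero counterpart, via the integral form machinery of \S\ref{ss:Zforms}. The characteristic zero analogue, which asserts that $\dZ(u)$ is a polynomial in $u$ of degree $N$ over $Y_{n,l}^\C(\sigma)$, or equivalently that $\dZ_r = 0$ in $Y_{n,l}^\C(\sigma)$ for all $r > N$, is the content of \cite[Lemma 3.7]{BKrep}, and I would take this as the starting input. The paper's own introduction of $Z(u)$ already points to this reference.

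First I would observe that each coefficient $Z_r$ extracted from the defining series \eqref{e:defineZu} is a polynomial in the generators $D_i^{(s)}$ alone with integer coefficients: no $E_i^{(s)}$ or $F_i^{(s)}$ enters, because $\prod_{i=1}^n (u-(i-1))^{p_i}$ lies in $\Z[u]$ and $C(u) = \prod_{i=1}^n D_i(u-(i-1))$ is built entirely from the diagonal series $D_i(u)$ (with $D_i^{(0)} = 1$ so that the $u^{-k}$ expansion of $D_i(u-(i-1))$ has integer coefficients in the $D_i^{(s)}$). Consequently $Z_r$ can be regarded simultaneously as an element of $Y_n^\Z(\sigma)$, $Y_n^\C(\sigma)$ and $\Yns$ via the same integer-coefficient formula, and likewise for its image $\dZ_r$ in each of the corresponding truncated algebras $Y_{n,l}^\Z(\sigma)$, $Y_{n,l}^\C(\sigma)$, $\Ynls$.

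The transfer then proceeds in two formal steps. The injectivity $Y_{n,l}^\Z(\sigma) \hookrightarrow Y_{n,l}^\C(\sigma)$ from Lemma~\ref{L:integraltruncatedPBW}(b) forces $\dZ_r = 0$ in $Y_{n,l}^\Z(\sigma)$ whenever $r > N$, and the isomorphism $Y_{n,l}^\Z(\sigma) \otimes_\Z \kk \isoto \Ynls$ from Lemma~\ref{L:integraltruncatedPBW}(c) then carries this vanishing over to $\Ynls$. Equivalently, the entire transfer is a single application of Corollary~\ref{C:redmodp}(a) to the polynomial identity $\dZ_r = 0$ (which, since $Z_r$ involves only the $D_i^{(s)}$, falls within the scope of that corollary).

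The main obstacle, such as it is, is only the mild bookkeeping in the second paragraph: confirming that $Z_r$ admits a presentation involving solely the $D_i^{(s)}$ with integer coefficients. Both points are transparent from the definitions, so once \cite[Lemma 3.7]{BKrep} is in hand the argument is essentially formal. I would not attempt a direct proof inside $\Ynls$ working with the PBW basis \eqref{e:YnlsPBWbasis}, as this would reduce to reproving the characteristic zero calculation with no conceptual gain.
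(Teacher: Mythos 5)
Your proposal is correct and follows essentially the same route as the paper: the authors also view $\dZ(u)$ with coefficients in $Y_{n,l}^\C(\sigma)$, invoke \cite[Lemma 3.7]{BKrep}, and transfer to $\Ynls$ via Corollary~\ref{C:redmodp}. Your extra bookkeeping that the $Z_r$ involve only the $D_i^{(s)}$ with integer coefficients is a valid (and slightly more explicit) justification of the hypothesis needed to apply that corollary, which the paper treats more summarily as ``an integral linear combination of products of the generators.''
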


\begin{proof}
We may view $\dZ(u)$ as a Laurent series in $u^{-1}$ with coefficients in
the complex truncated shifted Yangian
$Y_{n,l}^\C(\sigma)$, which can be expressed
as an integral linear combination of products of the generators of $\Ynls$.  In this setting
\cite[Lemma 3.7]{BKrep} implies that $\dZ(u)$
is in fact a polynomial in $u$ of degree $N$.
Now viewing $\dZ(u)$ as a Laurent series in $u^{-1}$ with coefficients in $\Ynls$ and using
Corollary \ref{C:redmodp}, we deduce
that $\dZ(u)$ is a polynomial in $u$ of degree $N$.
\end{proof}

Examining the coefficient of $u^{N-r}$ in \eqref{e:defineZu} we see that for
$r > N$ the element $\dC^{(r)} \in \Ynls$ is a linear combination of $\dC^{(r-N)},\dots,\dC^{(r-1)}$. In particular, the image of $Z_{\HC}(\Yns)$ in $\Ynls$,
is generated by $\{\dC^{(r)} \mid r = 1,\dots,N\}$ or equivalently by
$\{\dZ_r \mid r = 1,\dots,N\}$.  We refer to this subalgebra of $Z(\Ynls)$ as
{\em the Harish-Chandra centre of $\Ynls$} and denote it by $Z_{\HC}(\Ynls)$.

Similarly we define {\em the $p$-centre of $\Ynls$} to be the image of the $p$-centre of $\Yns$ in $\Ynls$, and denote it by $Z_p(\Ynls)$.

We are now ready to state and prove our description of the centre of $\Ynls$.

\begin{Theorem}
\label{T:centreY}
$ $
\begin{enumerate}
\item[(a)] The elements $\dZ_1,\dots,\dZ_N$ are algebraically independent generators for $Z_\HC(\Ynls)$.
\item[(b)] The elements of
\begin{equation}
\label{e:pgenerators2}
\begin{array}{c}\{\dot B_i^{(rp)} \mid 1\le i \le n, \ 0 < r \le p_i\} \, \cup \\
\{(\dot E_{i,j}^{(r)})^p \mid 1\le i < j \le n,\ s_{i,j} < r \le s_{i,j} + p_{\min(i,j)}\} \, \cup \\
\{(\dot F_{i,j}^{(r)})^p \mid 1\le i< j \le n,\ 0 < r \le s_{j,i} + p_{\min(i,j)}\}\end{array}
\end{equation}
are algebraically independent generators of $Z_p(\Ynls)$.
\item[(c)] Via the isomorphism $\gr \Ynls \iso U(\g^e)$ given in \eqref{e:grYnlsge}, we have that
$\gr Z_p(\Ynls)$ identifies with $Z_p(\g^e) \sub U(\g^e)$.
\item[(d)] $Z(\Ynls)$ is a free module of rank $p^N$ over $Z_p \Ynls$: a basis is given by
\begin{equation}
\label{e:freebasiscentreTSY}
\{\dZ_1^{k_1} \cdots \dZ_N^{k_N} \mid 0 \le k_i < p\}.
\end{equation}
\end{enumerate}
\end{Theorem}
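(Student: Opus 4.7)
My plan is to pass to the associated graded for the loop filtration, identify $\gr\Ynls$ with $U(\g^e)$ via the isomorphism $S_{-\eta}\circ\gr\phi$ of \eqref{e:grYnlsge}, and then invoke the description of $Z(\g^e)$ from Lemma~\ref{L:Ucentralisercentre}. All four parts will follow once we match the top symbols of our candidate generators with the standard generators of $Z(\g^e)$.

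The first step is a top symbol calculation. Expanding
\[
\dZ(u) = \prod_{i=1}^n (u-(i-1))^{p_i}\,\dC(u), \qquad \dC(u) = \prod_{i=1}^n \dD_i(u-(i-1)),
\]
and reading off the coefficient of $u^{N-r}$ shows $\dZ_r \in \cF_r\Ynls$; a manipulation of column determinants, essentially that of the paragraph following \eqref{e:ZcentralUg}, shows that $\gr_r \dZ_r$ corresponds under \eqref{e:grYnlsge} to $(-1)^r z_r$, with $z_r$ as in \eqref{e:definezr}. For the elements listed in \eqref{e:pgenerators2}, equation \eqref{e:tops} directly gives their top symbols: each is, up to sign, $x^p - x^{[p]}$ for the basis element $x = c_{i,j}^{(r-1)}$ of $\g^e$ from \eqref{e:centraliserbasis}, where the index bounds in \eqref{e:pgenerators2} are tuned so that $x \ne 0$. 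Thus these top symbols are precisely the standard generators of $Z_p(\g^e)$.

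With these identifications, (a), (b) and (c) follow by routine filtered arguments. Algebraic independence of $\dZ_1,\dots,\dZ_N$ (resp.\ of the elements in \eqref{e:pgenerators2}) lifts from algebraic independence of their top symbols, guaranteed by Lemma~\ref{L:Ucentralisercentre}(a) (resp.\ by the polynomiality of $Z_p(\g^e)$ on its standard generators). Generation in (a) comes from the polynomial identity $\dZ(u)=\prod_i(u-(i-1))^{p_i}\dC(u)$ of degree $N$, which solves recursively for $\dC^{(r)}$ with $r > N$ in terms of $\dZ_1,\dots,\dZ_N$. For generation in (b), let $S$ denote the subalgebra generated by \eqref{e:pgenerators2}: then $S \sub Z_p(\Ynls)$ and $\gr S = Z_p(\g^e)$ by Step 1. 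Conversely, since $Z_p(\Yns)$ is polynomial on \eqref{e:pgenerators} whose top symbols under $\gr\Yns \iso U(\cns)$ lie in $Z_p(\cns)$ by \eqref{e:tops}, their images in $\gr\Ynls \iso U(\cnls) \iso U(\g^e)$ lie in $Z_p(\cnls) = Z_p(\g^e)$. Hence $\gr Z_p(\Ynls) \sub Z_p(\g^e) = \gr S$, and a standard filtered comparison forces $S = Z_p(\Ynls)$, giving (b) and (c) simultaneously.

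For (d), combining the above gives $\gr Z(\Ynls) = Z(\g^e)$, using $\gr Z(\Ynls) \sub Z(\gr\Ynls) = Z(\g^e)$ and the fact that the generators of $Z(\g^e)$ from Lemma~\ref{L:Ucentralisercentre}(b) all lie in the image. Since that lemma provides $\{z_1^{k_1}\cdots z_N^{k_N} \mid 0 \le k_i < p\}$ as a free $Z_p(\g^e)$-basis of $Z(\g^e)$, and the monomial $\dZ_1^{k_1}\cdots \dZ_N^{k_N}$ has as top symbol (up to sign) the corresponding monomial in the $z_r$, Lemma~\ref{L:filtfree} transfers freeness to give (d) with the basis \eqref{e:freebasiscentreTSY}. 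The main obstacle I anticipate is the generation step in (b): verifying that $\gr Z_p(\Ynls) \sub Z_p(\g^e)$ requires a careful handling of the images of every generator from \eqref{e:pgenerators}, and is where the interaction between the truncation by $I_{n,l}$ and the $p$-centre structure is most delicate.
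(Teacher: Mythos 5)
Your overall strategy --- pass to the loop associated graded, identify $\gr\Ynls$ with $U(\g^e)$ via \eqref{e:grYnlsge}, match top symbols against Lemma~\ref{L:Ucentralisercentre} --- is the same as the paper's, and your treatment of (a) and (d) is essentially correct (modulo a degree bookkeeping slip: $\dZ_r$ lies in $\cF_{r-d_r}\Ynls$, not $\cF_r\Ynls$, since $z_r$ is a sum of products of $d_r$ elements $c_{i,j}^{(m-1)}$ of total loop degree $r-d_r$). However, the central step of (b) as you state it does not go through, and you yourself flag it at the end without providing a resolution.

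The problem is the inclusion $\gr Z_p(\Ynls) \sub Z_p(\g^e)$. You argue that this follows because the top symbols of the generators of $Z_p(\Yns)$ listed in \eqref{e:pgenerators} lie in $Z_p(\cns)$, and hence their images in $U(\cnls)$ lie in $Z_p(\cnls) \cong Z_p(\g^e)$. But this reasoning is only valid for those generators whose $\gr\Yns$-symbols survive the projection $U(\cns)\onto U(\cnls)$, i.e.\ those whose indices fall in the ranges of \eqref{e:pgenerators2}. For the remaining generators --- for example $(\dE_{i,j}^{(r)})^p$ with $r > s_{i,j}+p_{\min(i,j)}$, or $\dB_i^{(rp)}$ with $r > p_i$ --- the corresponding symbol $(e_{i,j}t^{r-1})^p$ or $(e_{i,i}t^{r-1})^p - e_{i,i}t^{(r-1)p}$ lies in the kernel of $U(\cns)\onto U(\cnls)$, so the element drops to a strictly lower loop degree in $\Ynls$ and its actual top symbol is unknown. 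In general, when passing a filtered subalgebra through a filtered quotient, $\gr$ of the image need not be contained in the image of $\gr$, precisely because of such degree drops. So the containment $\gr Z_p(\Ynls) \sub Z_p(\g^e)$ requires an extra argument. The paper supplies this via a separate device: it observes that $\Ynls$ embeds in $Y_{n+1,l}(\bar\sigma)$ (obtained by adding a row to the pyramid), that all elements of $Z_p(\Ynls)$ remain central there, and that by Lemma~\ref{L:gepcent} the centralizer condition $U(\g^e)\cap Z(U(\bar\g^{\bar e})) = Z_p(\g^e)$ exactly pins down the top symbols of central elements of the smaller algebra inside the larger one. Without that input (or some substitute), your filtered comparison cannot conclude.

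Two smaller points: for the identity \eqref{e:Cisacdet} and the top-symbol computation of $\dZ_r$ via $z_r$ one really wants $\sigma$ upper triangular, so the formulas from \cite{BB} apply verbatim; the paper therefore first proves (a) and (d) in the upper-triangular case and then reduces the general case via the isomorphism $Y_n(\sigma)\isoto Y_n(\sigma_u)$ of \cite[\S4.5]{BT}, which fixes the $D$-subalgebra pointwise. Your proposal does not address this reduction, though it is primarily a matter of bookkeeping. Also, the reduction-mod-$p$ arguments are genuinely needed to justify \eqref{e:Cisacdet} and the expression for $\dZ_r$ over $\kk$; you gesture at this with ``a manipulation of column determinants, essentially that of the paragraph following \eqref{e:ZcentralUg}'', but in the paper these steps go through the $\Z$-form via Corollary~\ref{C:redmodp}.
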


\begin{proof}
We first prove the theorem under the assumption that $\sigma$ is upper-triangular, and then explain
how to deduce it in general.  So assume for now that $\sigma$ is upper-triangular.

We begin by giving an alternative expression for $C(u) \in \Yns[[u^{-1}]]$.
Recall that column determinants are defined in \eqref{e:cdetdefn}, and the
power series $T_{i,j}(u)$ are defined in \eqref{e:Tij}.
Viewing $C(u)$ as an element of $Y_{n}^\C(\sigma)[[u^{-1}]]$ we have
\begin{align}
\label{e:Cisacdet}
C(u) = \cdet \left( \begin{array}{cccc}  T_{1,1}(u) &  T_{1,2}(u-1) & \dots &  T_{1,n}(u-n+1)\\
T_{2,1}(u) &  T_{2,2}(u-1) & \dots &  T_{2,n}(u-n+1)\\
\vdots & \vdots & \ddots & \vdots \\
 T_{n,1}(u) &  T_{n,2}(u-1) & \dots &  T_{n,n}(u-n+1) \end{array}\right) \in \Yns.
\end{align}
as a consequence of \cite[(2.79)]{BKrep} and \cite[Theorem~2.2]{BB}.  Some further explanation of this
is appropriate, as \cite[(2.79)]{BKrep} shows that \eqref{e:Cisacdet} holds for the (unshifted) Yangian $Y_{n}^\C$.
However, as explained by \cite[Corollary~2.2]{BKshift}, we can view $Y_{n}^\C(\sigma) \sub Y_{n}^\C$, and
then \cite[Theorem~2.2]{BB} implies that \eqref{e:Cisacdet} holds for $Y_{n}^\C(\sigma)$.  A subtle point
here is that the elements $T_{i,j}^{(r)} \in Y_{n}^\C(\sigma)$ depend on $\sigma$, as is explained in \cite[Section 2]{BB},
and this is where we require that $\sigma$ is upper triangular.
Now using Corollary~\ref{C:redmodp} we have that \eqref{e:Cisacdet} holds in $Y_n(\sigma)[[u^{-1}]]$.

For the next step we claim that $\dZ_r \in \cF_{r-d_r} Y_{n,l}(\sigma)$ and that
$\gr_{r-d_r} \dZ_r = (-1)^{r-d_r} z_r \in \gr \Ynls = U(\g^e)$, under the identification
$\gr \Ynls \iso U(\g^e)$ given by \eqref{e:grYnlsge}.
Thanks to \eqref{e:Cisacdet}, the definition of $\dZ(u)$ given in \eqref{e:defineZu} is the same
as that given in \cite[(3.2)]{BB}.
Next we observe that the formula given in \cite[Lemma~3.5]{BB} which expresses $\dZ_r$ in terms of
the elements $\dT_{i,j}^{(r)}$ can be expressed as an integral linear combination of products of the generators of
of $Y_{n,l}^\C(\sigma)$ in \eqref{e:Ygens}.  Applying Corollary~\ref{C:redmodp}
we conclude that the same formula holds for  $\dZ_r \in \Ynls$. Now the argument used to
complete the proof of \cite[Theorem~3.4]{BB} can be repeated verbatim to deduce the claims
made at the beginning of this paragraph.

Now we may combine Lemma~\ref{L:Ucentralisercentre}(a)
with a standard filtration argument to deduce that $\dZ_1,\dots,\dZ_N$ are algebraically independent, proving (a).

Let $i=1,\dots,n$ and $0 \le r < p_i$. According to \eqref{e:degreeofBEF} and \eqref{e:tops}
the element $B_i^{((r+1)p)} \in \Yns$  lies in loop degree $rp$ and  $\gr_{rp} B_i^{((r+1)p)} = (e_{i,i}t^{r})^p - e_{i,i} t^{rp}$,
under the identification $\gr \Yns \iso U(\cns)$ given by $\tilde \psi$ from \eqref{e:isogrTSY}.
More explicitly this means that $\tilde \psi((e_{i,i}t^{r})^p - e_{i,i} t^{rp}) = \gr_{rp} B_i^{((r+1)p)}$.
Using Lemmas~\ref{L:truncatedshiftedbasis} and \ref{L:integralshiftedPBW}, we
deduce that $\dB_i^{((r+1)p)}$ has loop degree $rp$ in $\Ynls$ and
that $\psi((e_{i,i}t^{r})^p - e_{i,i} t^{rp} + \i_{n,l}) = \gr_{rp} \dB_i^{((r+1)p)}$, where
$\psi$ is defined in \eqref{e:psi}.
Now using \eqref{e:yuck}, we see that $\gr_{rp} \dB_i^{((r+1)p)} = (-1)^{rp}(c_{i,i}^{(r)})^p - (-1)^{rp}c_{i,i}^{(rp)} \in Z_p(\g^e)$
under the identification $\gr \Ynls \iso U(\g^e)$ given by \eqref{e:grYnlsge}.

A similar argument shows that the elements $(\dE_{i,j}^{(r+1)})^p, (\dF_{i,j}^{(r+1)})^p \in \Ynls$
lie in loop degree $rp$ and satisfy $\gr_{rp} (\dE_{i,j}^{(r+1)})^p = (-1)^{rp} (c_{i,j}^{(r)})^p$
and $\gr_{rp} (\dF_{i,j}^{(r+1)})^p = (-1)^{rp} (c_{j,i}^{(r)})^p$, under the identification
$\gr \Ynls \iso U(\g^e)$. Since these elements are algebraically independent
generators for $Z_p(\g^e)$, it follows that the elements in \eqref{e:pgenerators2} are algebraically
independent in $Z_p(\Ynls)$.

We next show that $Z_p(\Ynls)$ coincides with the
algebra generated by the elements in \eqref{e:pgenerators2}; we denote this latter algebra by $\widehat Z_p(\Ynls)$.

From the pyramid $\pi$ associated to $(\sigma, l)$ we construct the pyramid $\bar \pi$
by adding another row to the bottom of length $p_n$, as we did in
\S\ref{ss:Znilpcent}. This gives a new shift matrix $\bar\sigma$ with $\bar s_{n,n+1} = \bar s_{n+1, n} = 0$ and
$s_{i,i+1} = \bar s_{i,i+1}$, $s_{i+1, i} = \bar s_{i+1, i}$ for $i=1,\dots,n$.
The defining relations of the truncated shifted Yangian, along with the PBW theorem given in Theorem~\ref{T:isoandPBW}(b)
imply that there is an embedding
$\Ynls \into Y_{n+1,l}(\bar\sigma)$. Since the elements $(\dE_{i,j}^{(r)})^p, (\dF_{i,j}^{(r)})^p, \dB_i^{(rp)} \in \Ynls$ are sent to the elements of $Y_{n+1,l}(\bar\sigma)$ with the same names, it follows that these elements are central in $Y_{n+1, l}(\bar\sigma)$.
We conclude that every element of $Z_p(\Ynls)$ commutes with every element of
$Y_{n+1,l}(\bar\sigma)$. Following the notation of Lemma~\ref{L:gepcent}
we identify $\gr Y_{n+1,l}(\bar\sigma)$ with $U(\bar \g^{\bar e})$ using
the analogue of the isomorphism given in \eqref{e:grYnlsge}.

We will show that the inclusion $\widehat Z_p(\Ynls) \sub Z_p(\Ynls)$ is an
equality by considering the associated graded algebras. Thanks to our previous observations we have
$\gr \widehat Z_p(\Ynls) = Z_p(\g^e)$. Suppose that
$Z_p(\Ynls) \setminus \widehat Z_p(\Ynls) \neq \varnothing$ and choose an element
$u$ of minimal loop degree, say $d$. By the remarks of the previous paragraph we see that
$\gr_d u$ commutes with everything in $U(\bar \g^{\bar e})$ and applying Lemma~\ref{L:gepcent}
we see that $\gr_d u \in Z_p(\g^e)$. As we observed above the generators of $Z_p(\g^e)$
are all of the form $\gr_{(r-1)p} B_i^{(rp)}$, $\gr_{(r-1)p} (E_{i,j}^{(r)})^p$, $\gr_{(r-1)p} (F_{i,j}^{(r)})^p$
where the indexes $i,j,r$ are restricted in accordance with \eqref{e:pgenerators2}.
Consequently there exists $u' \in \widehat Z_p(\Ynls)$ of loop degree $d$ such
that $\gr_d u = \gr_d u'$. Since $u \notin \widehat Z_p(\Ynls)$ we deduce that
$u - u' \in Z_p(\Ynls) \setminus \widehat Z_p(\Ynls)$ is of strictly lower loop degree.
Since the degree of $Z$ was assumed to be minimal, we have reached a contradiction.  This confirms
that $Z_p(\Ynls) = \widehat Z_p(\Ynls)$, and thus completes the proof of (b).

To prove (c), we start by observing that we have shown
\begin{equation} \label{e:grpr}
\begin{array}{c}
\gr_{rp} \dB_i^{((r+1)p)} = (-1)^{rp}(c_{i,i}^{(r)})^p - (-1)^{rp}c_{i,i}^{(rp)}, \quad
\gr_{rp} (\dE_{i,j}^{(r+1)})^p = (-1)^{rp} (c_{i,j}^{(r)})^p \\ \text{and} \quad \gr_{rp} (\dF_{i,j}^{(r+1)})^p = (-1)^{rp} (c_{j,i}^{(r)})^p
\end{array}
\end{equation}
generate both $\gr Z_p(\Ynls)$ and $Z_p(\g^e)$.  Hence, $\gr Z_p(\Ynls) = Z_p(\g^e)$.

We have seen that $\gr_{r - d_r} Z_r = (-1)^r z_r$, and we have
also have \eqref{e:grpr}.
Thus Lemma~\ref{L:Ucentralisercentre} along with a standard filtration argument implies that
$Z(\Ynls)$ is generated by $Z_\HC(\Ynls)$ and $Z_p(\Ynls)$.
Now we can deduce (d)
from Lemma~\ref{L:filtfree} and Lemma~\ref{L:Ucentralisercentre}(b).

We have now completed the proof in case $\sigma$ is upper-triangular and it
remains to explain how to deduce the theorem for arbitrary $\sigma$.
First we note that our proof of (b) and (c) does not actually require the assumption
that $\sigma$ is upper triangular.  So we are left to deal with (a) and (d).

It follows from \cite[4.5, (4)]{BT} that there exists
an upper-triangular shift matrix $\sigma_u$ and an isomorphism
$\iota : Y_n(\sigma) \isoto Y_n(\sigma_u)$.
Each of these algebras has a commutative subalgebra
generated by $\{D_i^{(r)} \mid 1 \le i \le n, r \ge 0\}$, and
the isomorphism $\iota$ fixes this subalgebra pointwise.
Consequently, there is an induced isomorphism
$Y_{n,l}(\sigma) \isoto Y_{n,l}(\sigma_u)$.
This same fact also shows that the coefficients of the series $C(u)$ are fixed by $\iota$
which implies that $\iota : Z_\HC(\Ynls) \isoto Z_\HC(Y_{n,l}(\sigma_u))$ and that the elements
denoted $Z_1,\dots,Z_N$ in $\Ynls$ are sent to the elements with the same names in $Y_{n,l}(\sigma_u)$.
Furthermore it follows from the definition of $\iota$ that the generators of $Z_p(\Yns)$ are sent
bijectively to the generators of $Z_p(Y_n(\sigma_u))$, and we conclude that
$\iota : Z_p(\Ynls) \isoto Z_p(Y_{n,l}(\sigma_u))$. Now we can deduce (a) and (d) for $\Ynls$
from the same statements for $Y_{n,l}(\sigma_u)$.
\end{proof}

In the left-justified case, we saw in the proof above that $\gr Z_\HC(\Ynls)$ identifies with
$U(\g^e)^{G^e} \sub U(\g^e) \iso \gr \Ynls$.  It would be possible to prove this in general
by using a reduction modulo $p$ argument, but this fact is not required in the sequel.

For later use we record an immediate consequence of Theorem~\ref{T:centreY},
which describes a basis for $Z(\Ynls)$.  To do this
we use some notation introduced \S\ref{ss:truncate}.  For $\bu = (u_{i,j}^{(r)}) \in \bI_F$,
$\bt = (t_i^{(r)}) \in \bI_D$, $\bv = (v_{i,j}^{(r)}) \in \bI_E$ and
$\bw = (w_1,\dots,w_N) \in \{0,\dots,p-1\}^N$ we define
\begin{equation}
\label{e:PBWforthecentre}
(\dF^p)^\bu \dB^{\bt} (\dE^p)^\bv \dZ^\bw := \prod (\dF_{i,j}^{(r)}{}^p)^{u_{i,j}^{(r)}} \prod (\dB_i^{(rp)})^{t_i^{(r)}} \prod (\dE_{i,j}^{(r)}{}^p)^{v_{i,j}^{(r)}} \prod_{i=1}^{N} \dZ_i^{w_i}.
\end{equation}

\begin{Corollary}
\label{C:basisforthecentre}
A basis for $Z(\Ynls)$ is given by the ordered monomials
\begin{equation}
\label{e:basisforthecentre}
\{(\dF^p)^\bu \dB^{\bt} (\dE^p)^\bv \dZ^\bw \mid (\bu, \bt, \bv) \in \bI_F \times \bI_D \times \bI_E, \bw \in \{0,\dots,p-1\}^N\}
\end{equation}
\end{Corollary}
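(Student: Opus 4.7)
The claim follows by combining parts (b) and (d) of Theorem~\ref{T:centreY}, so the proposal is essentially to transport a module basis along a known free-module decomposition. The plan is as follows.

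First I would use Theorem~\ref{T:centreY}(b), which asserts that the elements listed in \eqref{e:pgenerators2} are algebraically independent generators of $Z_p(\Ynls)$. This means $Z_p(\Ynls)$ is isomorphic as a $\kk$-algebra to a polynomial ring in those generators. Consequently, an explicit $\kk$-basis of $Z_p(\Ynls)$ is given by the ordered monomials
\[
\{(\dF^p)^\bu \dB^{\bt} (\dE^p)^\bv \mid (\bu,\bt,\bv)\in \bI_F \times \bI_D\times \bI_E\},
\]
with the index ranges exactly as defined in \S\ref{ss:truncate} (matching those of \eqref{e:pgenerators2}).

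Next I would apply Theorem~\ref{T:centreY}(d), which states that $Z(\Ynls)$ is free as a $Z_p(\Ynls)$-module with basis $\{\dZ_1^{w_1}\cdots \dZ_N^{w_N}\mid \bw\in \{0,\dots,p-1\}^N\}$. Combining these two facts in the standard way (writing a $\kk$-basis of a free module as products of a module basis with a $\kk$-basis of the ground ring), the set displayed in \eqref{e:basisforthecentre} is a $\kk$-basis of $Z(\Ynls)$.

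Because all of the heavy lifting has been done in Theorem~\ref{T:centreY}, there is no real obstacle here; the only thing to be careful about is that the ordering convention used in the definition \eqref{e:PBWforthecentre} of the monomials $(\dF^p)^\bu \dB^{\bt} (\dE^p)^\bv \dZ^\bw$ is consistent, so that one indeed obtains a well-defined basis rather than a spanning set with redundancies. Since each block $(\dF^p)^\bu$, $\dB^{\bt}$, $(\dE^p)^\bv$, $\dZ^\bw$ lies in the commutative subalgebra $Z(\Ynls)$, the order within each block does not affect the elements, and the overall product is the multiplication in $Z(\Ynls)$. Hence the corollary is immediate.
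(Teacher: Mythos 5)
Your argument is correct and is exactly the intended "immediate consequence" the paper has in mind: Theorem~\ref{T:centreY}(b) gives that $Z_p(\Ynls)$ is a polynomial ring on the generators in \eqref{e:pgenerators2} (hence the monomials $(\dF^p)^\bu \dB^{\bt} (\dE^p)^\bv$ are a $\kk$-basis of it), Theorem~\ref{T:centreY}(d) gives that $\{\dZ^\bw\}$ is a $Z_p(\Ynls)$-basis of $Z(\Ynls)$, and since $Z(\Ynls)$ is commutative the product of the two bases gives a $\kk$-basis of $Z(\Ynls)$ with the ordering in \eqref{e:PBWforthecentre} causing no ambiguity. The paper does not spell out a proof because it regards this as immediate; your write-up simply fills in the obvious details.
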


\subsection{Restricted (truncated) shifted Yangians}
\label{ss:restrictedshifted}
It is well-known that $U(\g)$
is a free module over its $p$-centre with a basis given by PBW monomials
in the standard basis of $\g$ in which every exponent is less than $p$; we refer to such
monomials as {\em $p$-restricted monomials}.
It follows that the restricted enveloping algebra $U^{[p]}(\g)$
is spanned by the image of the $p$-restricted monomials. Analogous statements hold for $\Yns$ and $\Ynls$, as we
now explain.

As explained at the end of \S\ref{ss:centreshiftedYangian}, we have that
$\Yns$ is a free $Z_p(\Yns)$-module with basis given by the $p$-restricted monomials in the
PBW generators of $\Yns$ given in \eqref{e:pbwgens}.
We define $Z_p(\Yns)_+$ to be the maximal ideal of $Z_p(\Yns)$
generated by the elements given in  \eqref{e:pgenerators}. Now we can define the
{\em restricted shifted Yangian} $\Ynps := \Yns /\Yns Z_p(\Yns)_+$.
The images in $\Ynps$ of the $p$-restricted
monomials in the PBW generators of $\Yns$ given in \eqref{e:pbwgens} form a basis of $\Ynps$.

As a consequence of Lemma~\ref{L:filtfree} and Theorem~\ref{T:centreY}(c), we see that $\Ynls$ is free
as an $Z_p(\Ynls)$-module.
To give a basis for this module we recall that from \eqref{e:YnlsPBWbasis} we
have the basis $\{\dF^\bu \dD^\bt \dE^\bv \mid (\bu,\bt,\bv) \in \bI_F \times \bI_D \times \bI_E\}$ of $\Ynls$.  We let $\bI_p$
be the set of all tuples $(\bu,\bt,\bv)$ where all entries of $\bu$, $\bt$ and $\bv$ are less than $p$.
Then the $p$-restricted monomials $\{\dF^\bu \dD^\bt \dE^\bv \mid (\bu,\bt,\bv) \in \bI_p\}$
form a basis of $\Ynls$ as a free $Z_p(\Ynls)$-module. We define $Z_p(\Ynls)_+$ to be the ideal
of $Z_p(\Yns)$ generated by the elements \eqref{e:pgenerators2} of $Z_p(\Ynls)$ and define the
{\em restricted truncated shifted Yangian} $\Ynlps := \Ynls /\Ynls Z_p(\Ynls)_+$.
Then a basis of $\Ynlps$ is given by
\begin{equation}
\label{e:PBWrestY}
\{\dF^\bu \dD^\bt \dE^\bv + \Ynls Z_p(\Ynls)_+  \mid (\bu,\bt,\bv) \in \bI_p\}
\end{equation}
In particular, we note that $\dim \Ynlps = p^{\dim \g^e}$.

We let $I_{n,l}^{[p]}$ be the ideal of $Y_n^{[p]}(\sigma)$ generated by $\{D_1^{(r)} + J_p(\Yns) \mid r > p_1 \}$.
Then using Theorem~\ref{T:centreY}(b) we can see
that there is a natural isomorphism
\begin{equation}
\Ynlps \isoto \Ynps / I_{n,l}^{[p]}.
\end{equation}

\subsection{The centre of $U(\g,e)$} \label{ss:centreW}
We use the description of $Z(\Ynls)$ given in Theorem~\ref{T:centreY}
along with the isomorphism $\phi : \Ynls \isoto U(\g,e)$ to
provide an explicit description of the centre $Z(\g,e)$ of $U(\g,e)$ as stated in Theorem~\ref{T:centreW} below.

We recall the map $\pr : U(\g) \to U(\p)$ is defined in \eqref{e:pr}
and define the {\em Harish-Chandra centre} $Z_\HC(\g,e)$ of $U(\g,e)$ to be the image
of $U(\g)^G$ under $\pr$. It is evident that $Z_\HC(\g,e)$ is invariant under the
twisted adjoint action of $M$, and
that these elements are central in $U(\g,e)$.
Our first objective is to show that the isomorphism $\phi : \Ynls \isoto U(\g,e)$ from \eqref{e:tildephi}
preserves the Harish--Chandra centres.

Recall that $Z_{\HC}(\Ynls)$ is generated by the coefficients of the polynomial $\dZ(u) \in \Ynls[u]$ defined in \S\ref{ss:centreTSY} whilst $U(\g)^G$ is generated by the coefficients of the Capelli determinant $Z^*(u) = \sum_{r=0}^N Z^{(r)} u^{N-r} \in U(\g)[u]$ given in \eqref{e:ZcentralUg}.
The following lemma relates these polynomials.

\begin{Lemma}
\label{L:HClemmaUg2}
We have the following equality in $U(\g,e)[u]$
\begin{equation}
\label{e:HCpowerseriesrelation}
\pr(Z^*(u)) = \phi(\dZ(u)).
\end{equation}
\end{Lemma}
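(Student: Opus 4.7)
The plan is to establish the identity first over $\C$ and then transfer to characteristic $p$ via the reduction modulo $p$ technique developed in \S\ref{ss:Zforms}.

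First, I will verify that both sides of \eqref{e:HCpowerseriesrelation} make sense integrally. The coefficients of the Capelli determinant $Z^*(u)$ lie in $U(\g_\Z)$, and the projection $\pr$ restricts to $\pr_\Z : U(\g_\Z) \to U(\p_\Z)$ defined in \eqref{e:prZ}, so $\pr_\Z(Z^*(u)) \in U(\p_\Z)[u]$. On the Yangian side, the coefficient of each power of $u$ in $\dZ(u)$ can, thanks to Lemma~\ref{L:integraltruncatedPBW}, be written as an integral polynomial in the generators of $Y_{n,l}^\Z(\sigma)$, and hence $\phi_\Z(\dZ(u)) \in U(\p_\Z)[u]$ via the map in \eqref{e:phiZ}. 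Both expressions therefore reside naturally in $U(\p_\Z)[u] \hookrightarrow U(\p_\C)[u]$.

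Second, I will establish the characteristic zero analogue
\begin{equation*}
\pr_\C(Z^*(u)) = \phi_\C(\dZ_\C(u)) \in U(\p_\C)[u].
\end{equation*}
Since $\phi_\C : Y_{n,l}^\C(\sigma) \isoto U(\g_\C,e)$ is the Brundan--Kleshchev isomorphism, the formula \eqref{e:Cisacdet} for $C(u)$ as a column determinant of the entries $T_{i,j}(u-i+1)$ (due to \cite[Theorem~2.2]{BB} via \cite[(2.79)]{BKrep}), combined with the definition \eqref{e:defineZu} of $\dZ(u)$, reduces the claim to matching column determinant expressions for the $T_{i,j}$'s against shifted matrix units $\te_{i,j}$. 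Under $\pr_\C$, the Capelli determinant $Z^*(u)$ becomes an $n \times n$ column determinant built from block-structured sums of the $\te_{i,j}$'s, and these blocks are precisely the images under $\phi_\C$ of (polynomial multiples of) the $T_{i,j}(u-i+1)$'s—this is essentially the content of the Harish-Chandra discussion in \cite{BKrep}, with the prefactor $u^{p_1}(u-1)^{p_2}\cdots(u-n+1)^{p_n}$ arising from the absorption of the identity contribution of $D_i(u-i+1)$ into the pyramid structure.

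Third, having verified the identity over $\C$, the injectivity of $U(\p_\Z) \hookrightarrow U(\p_\C)$ (the obvious analogue of Lemma~\ref{L:integraltruncatedPBW}(b) for $U(\p)$) forces the equality $\pr_\Z(Z^*(u)) = \phi_\Z(\dZ(u))$ in $U(\p_\Z)[u]$. Tensoring with $\kk$ yields the desired formula \eqref{e:HCpowerseriesrelation} in $U(\p)[u]$, where both sides automatically land in $U(\g,e)[u]$ since $\pr(Z^*(u)) \in U(\g,e)[u]$ by definition of the Harish-Chandra centre and $\phi(\dZ(u)) \in U(\g,e)[u]$ because $\phi$ is an isomorphism onto $U(\g,e)$.

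The main obstacle is the characteristic zero matching of normalizations: the shift by $-i+1$ along the diagonal of the Capelli matrix, the shifts by $\eta$ implicit in the $\te_{i,j}$'s, and the prefactor $u^{p_1}(u-1)^{p_2}\cdots(u-n+1)^{p_n}$ in $\dZ(u)$ all need to align correctly. Once this bookkeeping is done, the reduction modulo $p$ step is automatic given the integral framework already set up.
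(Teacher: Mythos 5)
Your proposal follows essentially the same route as the paper: set up both sides integrally using $\pr_\Z$ and $\phi_\Z$, establish the equality over $\C$, then tensor with $\kk$. The one place the paper is more economical is the characteristic-zero step: rather than re-deriving the matching of column determinants (your "main obstacle" of aligning the $\eta$-shift, the diagonal shift, and the prefactor $u^{p_1}(u-1)^{p_2}\cdots(u-n+1)^{p_n}$), the paper simply cites \cite[Lemma~3.7]{BKrep}, which states precisely that $\pr(Z^*(u)) = \dZ(u)$ holds in $U(\g_\C,e)[u]$, so no bookkeeping needs to be redone. One small correction to your sketch: in the column-determinant formula \eqref{e:Cisacdet}, the $(i,j)$-entry is $T_{i,j}(u-j+1)$ (shift by column index), not $T_{i,j}(u-i+1)$ as you wrote.
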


\begin{proof}
Recall that $\pr_\Z : U(\g_\Z) \to U(\p_\Z)$ is given in \eqref{e:prZ}.
If we view $Z^*(u)$ as a polynomial with coefficients in $U(\g_\Z)$
then $\pr_\Z(Z^*(u))$ is a polynomial with coefficients in $U(\p_\Z)$.
Using Lemma~\ref{L:integraltruncatedPBW}(b) we view $Y_{n,l}^\Z(\sigma)$ as a subalgebra
of $Y_{n,l}^\C(\sigma)$, and thus view $\dZ(u)$ as a polynomial with coefficients
in $Y_{n,l}^\Z(\sigma)$. Recalling the map $\phi_\Z$ from \eqref{e:phiZ} we obtain two
polynomials $\pr_\Z(Z^*(u))$ and $\phi_\Z(\dZ(u))$ with coefficients in $U(\p_\Z)$.
Using the natural inclusion $U(\p_\Z) \into U(\p_\C)$, \cite[Lemma 3.7]{BKrep}
implies that the equality $\pr_\Z (Z^*(u)) = \phi_\Z(\dZ(u))$ holds in $U(\p_\Z)[u]$.
Now, by taking the image of this equality under the natural map
$U(\p_\Z)[u] \to U(\p_\Z)[u]\otimes_\Z \kk \cong U(\p)[u]$, we
obtain \eqref{e:HCpowerseriesrelation}.
\end{proof}

We introduce the notation $Z_r := \pr(Z^{(r)}) \in U(\g,e)$ for $r=1,\dots,N$; by the
previous lemma we have that $Z_r = \phi(\dZ_r)$ too.

\begin{Corollary} \label{C:HClineup}
We have $\phi : Z_\HC(\Ynls) \isoto Z_\HC(\g,e)$.
\end{Corollary}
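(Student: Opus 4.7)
The plan is to match explicit algebra generators on the two sides and invoke Lemma~\ref{L:HClemmaUg2}. Theorem~\ref{T:centreY}(a) already says that $Z_\HC(\Ynls)$ is generated as a $\kk$-algebra by $\dZ_1,\dots,\dZ_N$. For the $W$-algebra side, I would specialise Lemma~\ref{L:Ucentralisercentre}(a) to $e = 0$ (here the single-column pyramid is trivially left-justified, so the standing assumption of that subsection is satisfied); combined with the identification $z_r = Z^{(r)}$ that is spelled out in the Capelli discussion following that lemma, this yields that $U(\g)^G$ is generated as a $\kk$-algebra by the coefficients $Z^{(1)},\dots,Z^{(N)}$ of $Z^{*}(u)$.

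Next I would compare coefficients of $u^{N-r}$ in the polynomial identity $\phi(\dZ(u)) = \pr(Z^{*}(u))$ of Lemma~\ref{L:HClemmaUg2}. This gives $\phi(\dZ_r) = \pr(Z^{(r)}) = Z_r$ for each $r = 1,\dots,N$. Since $\phi$ is a ring isomorphism, $\phi(Z_\HC(\Ynls))$ is precisely the $\kk$-subalgebra of $U(\g,e)$ generated by $Z_1,\dots,Z_N$. The corollary therefore reduces to verifying that this subalgebra coincides with $Z_\HC(\g,e) = \pr(U(\g)^G)$.

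To close this gap I would establish that the restriction $\pr\colon Z(U(\g)) \to U(\g,e)$ is an algebra homomorphism. Granting this, $\pr(U(\g)^G)$ is a $\kk$-subalgebra generated by $\pr(Z^{(1)}),\dots,\pr(Z^{(N)}) = Z_1,\dots,Z_N$, matching $\phi(Z_\HC(\Ynls))$ exactly. The multiplicativity of $\pr$ on central elements is the only point of substance, and I would argue it by realising $\pr$ as the projection onto the canonical representative of the cyclic left $U(\g)$-module $U(\g)/U(\g)\m_\chi \iso U(\p)$: for $z_1,z_2 \in Z(U(\g))$, centrality of $z_1$ lets one move it past $\pr(z_2)$, after which the fact that $U(\g)\m_\chi$ is a left ideal absorbs the error terms and gives $\pr(z_1 z_2) \equiv \pr(z_2)\pr(z_1) \pmod{U(\g)\m_\chi}$; swapping $z_1$ and $z_2$ (legal since they commute) converts this into $\pr(z_1)\pr(z_2)$. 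This short module-theoretic check is the main (and rather small) technical ingredient; the remainder of the argument is a formal assembly of earlier results.
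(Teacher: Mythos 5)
Your proof is correct and follows essentially the same route as the paper: both arguments reduce the corollary to observing that $Z_\HC(\Ynls)$ is generated by $\dZ_1,\dots,\dZ_N$, that $Z_\HC(\g,e)$ is generated by $Z_1,\dots,Z_N$, and that Lemma~\ref{L:HClemmaUg2} matches these generators under $\phi$. The one thing you make explicit that the paper leaves tacit is that $\pr$ is multiplicative on $Z(U(\g))$, which is needed for $Z_\HC(\g,e)=\pr(U(\g)^G)$ to be a subalgebra generated by the $Z_r$; your verification of this standard fact via the left-ideal property of $U(\g)\m_\chi$ is sound.
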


\begin{proof}
In \S\ref{ss:Znilpcent} we demonstrated that $U(\g)^G$ is generated by the coefficients of
$Z^*(u)$, and it follows that $Z_\HC(\g,e)$ is generated by the coefficients of $\pr Z^*(u)$, i.e.\ by $Z_1,\dots,Z_n$. Now
Lemma \ref{L:HClemmaUg2} implies that the generators $\dZ_1,\dots,\dZ_N$ of $Z_\HC(\Ynls)$ are sent bijectively
to those of $Z_\HC(\g,e)$.
\end{proof}

The {\em $p$-centre of $U(\g,e)$} is defined to be
$$
Z_p(\g,e) := Z_p(\p)^{\tw(M)} \sub U(\g,e).
$$
In the general setting of finite $W$-algebras associated to reductive groups, this subalgebra
was studied in some detail in \cite[Section~8]{GTmod}. Using the explicit formulas for the generators
 \eqref{e:Wgens} of $U(\g,e)$ given in \S\ref{ss:finiteW} we now introduce an
explicit generating set for $Z_p(\g,e)$.
Recall that the Kazhdan filtration of $U(\p)$ and $U(\g,e)$ was discussed at the end of \S\ref{ss:finiteW};
in particular, we identify $\gr' U(\g,e) \iso S(\p)^{\tw M}$.  Also we remind the reader that $\xi_\p : S(\p)^{(1)} \to Z_p(\p)$
is defined in \textsection\ref{ss:alggps}

\begin{Lemma}
\label{L:ZpUgegens}
$ $
\begin{enumerate}
\item[(a)] $Z_p(\g,e)$ is a polynomial algebra of rank $\dim \g^e$ generated by
\begin{align}
\label{e:Zpgegenerators}
\begin{array}{c}
 \{\xi_\p(\gr'_r D_i^{(r)}) \mid (i,r) \in \bJ_D\} \cup \{\xi_\p(\gr_r' E_{i,j}^{(r)}) \mid (i,j,r) \in \bJ_E\}\\   \cup\, \{\xi_\p(\gr_r' F_{i,j}^{(r)}) \mid (i,j,r) \in \bJ_F\}.\end{array}
\end{align}
\item[(b)] Explicitly we have
\begin{align}
\label{e:explicitDpcentre}
\begin{array}{l}\xi_\p(\gr_r' D_i^{(r)}) = \vspace{6pt}\\ \ \ \ \ \  \sum_{s=1}^r (-1)^{r-s} \sum_{\substack{i_1,\dots,i_s \\ j_1,\dots,j_s}} (-1)^{|\{t=1,\dots,s-1 \mid \row(j_t) \le i-1\}|}
(e_{i_1, j_1}^p - e_{i_1, j_1}^{[p]}) \cdots (e_{i_s, j_s}^p - e_{i_s, j_s}^{[p]}) \end{array}
\end{align}
where the sum is taken over the index set described in \eqref{e:Dir}.
\end{enumerate}
\end{Lemma}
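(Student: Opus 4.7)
The plan is to deduce both statements by transporting the known description $\gr' U(\g,e) = S(\p)^{\tw M}$ across the isomorphism $\xi_\p : S(\p)^{(1)} \isoto Z_p(\p)$ from \textsection\ref{ss:alggps}. For part~(a), the text of \textsection\ref{ss:finiteW} records that the Kazhdan leading terms $\gr_r' D_i^{(r)}$, $\gr_r' E_{i,j}^{(r)}$, $\gr_r' F_{i,j}^{(r)}$, indexed by $\bJ_D, \bJ_E, \bJ_F$, form algebraically independent generators of the polynomial algebra $S(\p)^{\tw M}$. A direct count shows $|\bJ_D| + |\bJ_E| + |\bJ_F| = \sum_i p_i + 2\sum_{i<j} p_i = \sum_{i,j} p_{\min(i,j)} = \dim \g^e$ by comparison with the basis \eqref{e:centraliserbasis}. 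Since $\xi_\p$ is a ring isomorphism, the elements \eqref{e:Zpgegenerators} are algebraically independent in $Z_p(\p)$ and generate a polynomial subring of rank $\dim \g^e$.

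To upgrade this subring to the full $p$-centre $Z_p(\g,e) = Z_p(\p)^{\tw M}$, I would appeal to \cite[Section~8]{GTmod}: the Kazhdan filtration restricts to $Z_p(\g,e)$, its associated graded identifies with $(S(\p)^{\tw M})^p \sub S(\p)$, and $\xi_\p$ upgrades to an algebra isomorphism $(S(\p)^{\tw M})^{(1)} \isoto Z_p(\g,e)$. A standard filtration argument in the spirit of Lemma~\ref{L:filtfree} then yields~(a).

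For part~(b), the strategy is to compute $\gr_r' D_i^{(r)} \in S(\p)$ directly from \eqref{e:Dir} and apply $\xi_\p$. Condition~(a) on the index set in \eqref{e:Dir} forces each summand $\te_{i_1,j_1}\cdots\te_{i_s,j_s}$ to have total Kazhdan degree exactly $r$. Since $\te_{i,j} = e_{i,j} + \eta(e_{i,j})$ with the scalar $\eta(e_{i,j})$ in Kazhdan degree $0$ and $e_{i,j} \in \p$ in strictly positive Kazhdan degree, the Kazhdan-$r$ component of each product is obtained by replacing every $\te_{i,j}$ with $e_{i,j}$. Hence $\gr_r' D_i^{(r)}$ is given by the same formula \eqref{e:Dir} with $\te_{i,j}$ replaced by $e_{i,j}$, interpreted in the commutative ring $S(\p)$. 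Applying the ring homomorphism $\xi_\p$, which sends $e_{i,j}\mapsto e_{i,j}^p - e_{i,j}^{[p]}$, produces the stated formula in~(b); the order of factors is immaterial since $Z_p(\p)$ is central in $U(\p)$.

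The principal obstacle lies in part~(a), namely the identification of $\gr' Z_p(\g,e)$ with $(S(\p)^{\tw M})^p$ and the corresponding upgrade of $\xi_\p$ to an isomorphism of $\tw M$-invariants. This compatibility between the twisted $M$-action on $S(\p)$ and the Frobenius operation $f \mapsto f^p$ is a nontrivial point; it ultimately rests on the framework of \cite[Section~8]{GTmod}, whereas all remaining ingredients are direct filtration computations or manipulations of the explicit formula \eqref{e:Dir}.
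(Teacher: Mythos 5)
Your overall strategy is the paper's: identify the generators of $S(\p)^{\tw M}$ as the Kazhdan leading terms of the PBW generators and transport them through $\xi_\p$; part~(b) is then the computation you carry out, and that part is correct. The genuine gap is in part~(a), and you correctly diagnose it without closing it: you need $\xi_\p \colon S(\p)^{(1)} \to Z_p(\p)$ to intertwine the \emph{twisted} $M$-actions, so that $Z_p(\g,e) = Z_p(\p)^{\tw M} = \xi_\p\bigl((S(\p)^{\tw M})^{(1)}\bigr)$, but ``the framework of \cite[Section~8]{GTmod}'' is not a proof of that.

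The specific ingredient the paper uses is \cite[Lemma~7.6]{GTmod}: the restriction of $\pr\colon U(\g)\to U(\p)$ to $Z_p(\g)$ is precisely the projection $Z_p(\g)\to Z_p(\p)$ along the decomposition $Z_p(\g) = Z_p(\g)\{x^p - x^{[p]} - \chi(x)^p \mid x\in\m\}\oplus Z_p(\p)$. Since the ordinary adjoint action of $M$ is intertwined by the global isomorphism $\xi_\g\colon S(\g)^{(1)}\to Z_p(\g)$, and since the twisted action is ``act, then project'', this compatibility of $\pr$ with $\xi_\g$ is exactly what gives the $\tw(M)$-equivariance of $\xi_\p$. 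Once that is in hand, (a) follows immediately from the isomorphism and needs no auxiliary filtration argument; indeed, the identification $\gr' Z_p(\g,e) = (S(\p)^{\tw M})^p$ on which your proposed filtration step would rest requires the same equivariance, so that route does not sidestep the obstacle. You should either quote \cite[Lemma~7.6]{GTmod} explicitly or reproduce its short proof.
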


\begin{proof}
As remarked at the end of \S\ref{ss:finiteW}, $S(\p)^{\tw(M)}$ is a polynomial algebra of rank $\dim \g^e$ generated by
$\{\gr'_r D_i^{(r)}, \gr_r' E_{i,j}^{(r)}, \gr_r' F_{i,j}^{(r)}\}$.
Using \cite[Lemma~7.6]{GTmod} we note that the restriction of $\pr : U(\g) \to U(\p)$ to $Z_p(\g)$ is the projection
$Z_p(\g) \to Z_p(\p)$ along the decomposition $Z_p(\g) = Z_p(\g)\{x^p -x^{[p]} - \chi(x)^p \mid x \in \m\} \oplus Z_p(\p)$.
It follows that $\xi_\p : S(\p)^{(1)} \to Z_p(\p)$ is equivariant for the twisted action of $M$, so we can deduce (a).

Part (b) now follows easily from (a), because the formula for
$\gr_r' D_i^{(r)}$ is obtained from \eqref{e:Dir} by replacing each occurrence of $\te_{i_l,j_l}$ with $e_{i_l, j_l}$.
\end{proof}

Using the explicit formulas for $E_i^{(r)}$ and $F_i^{(r)}$ given in \cite[Section~4]{GTmin} we can give precise formulas for the
generators $\xi_\p(\gr_r' E_i^{(r)})$ and $\xi_\p(\gr_r' F_i^{(r)})$ analogous
to that given for $\xi_\p(\gr_r' D_i^{(r)})$. In principle, it is also
possible, though more complicated, to provide expressions for the generators
$\xi_\p(\gr_r' E_{i,j}^{(r)})$ and $\xi_\p(\gr_r' F_{i,j}^{(r)})$ when $i < j+1$.

We are now ready to prove our main result regarding the centre of $U(\g,e)$.  For the statement
of this theorem, we consider the intersection $Z_{\HC,p}(\g,e) := Z_{\HC}(\g,e) \cap Z_p(\g,e)$.
It is a direct consequence of the definitions that this intersection is equal to
$\pr(Z_p(\g)^G)$.

\begin{samepage}
\begin{Theorem} \label{T:centreW} $ $
\begin{itemize}
\item[(a)] The centre $Z(\g,e)$ of $U(\g,e)$ is free of rank $p^N$ over $Z_p(\g,e)$ with basis
$$
\{Z_1^{k_1} \cdots Z_N^{k_N} \mid 0 \le k_i < p\}.
$$
\item[(b)] We have a tensor product decomposition
$$
Z(\g,e) = Z_p(\g,e) \otimes_{Z_{\HC,p}(\g,e)} Z_{\HC}(\g,e).
$$
\end{itemize}
\end{Theorem}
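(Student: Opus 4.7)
The strategy is to pass to the loop-associated-graded, apply Lemma~\ref{L:Ucentralisercentre} (the Brown--Brundan description of $Z(\g^e)$), and lift via Lemma~\ref{L:filtfree}. As noted at the end of the introduction, the paper does \emph{not} establish that $\phi$ matches the two $p$-centres, so we cannot simply transport Theorem~\ref{T:centreY} through $\phi$; instead the $p$-centre identification must be made at the graded level using the explicit generators from Lemma~\ref{L:ZpUgegens}.

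First I would establish the ambient graded identification. Since $\phi:\Ynls\isoto U(\g,e)$ is loop-filtered, $\gr\phi$ is a graded isomorphism, and combined with $S_{-\eta}$ gives $\gr U(\g,e)\iso U(\g^e)$ as in \eqref{e:grYnlsge}. Filtered isomorphisms respect subalgebra associated gradeds, so $\gr Z(\g,e)=\gr\phi(\gr Z(\Ynls))$; Theorem~\ref{T:centreY}(c),(d) together with the computation $\gr_{r-d_r}\dZ_r=(-1)^{r-d_r}z_r$ from its proof, and Lemma~\ref{L:Ucentralisercentre}(b), then identify $\gr Z(\g,e)=Z(\g^e)$.

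The key technical step is to show $\gr Z_p(\g,e)=Z_p(\g^e)$ directly. Using Lemma~\ref{L:ZpUgegens}(a), each generator $\xi_\p(\gr'_r D_i^{(r)})$ is expanded by decomposing $\gr'_r D_i^{(r)}\in S(\p)$ into loop-homogeneous components and applying the additivity of $\xi_\p$. The loop-top piece of $\xi_\p(\gr'_r D_i^{(r)})$ sits in loop degree $p(r-1)$ and comes from the linear part of $\gr'_r D_i^{(r)}$, which by the loop-top formulas \eqref{e:Wgensloop} is $(-1)^{r-1}(c_{i,i}^{(r-1)}+\eta(c_{i,i}^{(r-1)}))$. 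Under $S_{-\eta}$ the scalar shift is killed, and one checks via Jacobson's formula (which makes $\xi$ additive on Lie algebra elements) that the resulting element equals $(-1)^{r-1}\xi_{\g^e}(c_{i,i}^{(r-1)})=(-1)^{r-1}\sum_{h,k}e_{h,k}^p$, a generator of $Z_p(\g^e)$. Analogous computations for $\xi_\p(\gr'_r E_{i,j}^{(r)})$ and $\xi_\p(\gr'_r F_{i,j}^{(r)})$ produce $\pm\xi_{\g^e}(c_{i,j}^{(r-1)})$ and $\pm\xi_{\g^e}(c_{j,i}^{(r-1)})$. Since $Z_p(\g,e)$ and $Z_p(\g^e)$ are both polynomial algebras of rank $\dim\g^e$ and their generating sets match under $\gr$, we obtain $\gr Z_p(\g,e)=Z_p(\g^e)$.

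With these identifications, part~(a) is immediate: Lemma~\ref{L:Ucentralisercentre}(b) gives that $Z(\g^e)$ is free over $Z_p(\g^e)$ with basis $\{z_1^{k_1}\cdots z_N^{k_N}\mid 0\le k_i<p\}$; since $\gr Z_r=(-1)^{r-d_r}z_r$, Lemma~\ref{L:filtfree} lifts this to a $Z_p(\g,e)$-free basis $\{Z_1^{k_1}\cdots Z_N^{k_N}\}$ of $Z(\g,e)$. For part~(b), I would apply Lemma~\ref{L:tensorprodpresentation} with $A=Z(\g,e)$, $B=Z_p(\g,e)$, $C=Z_\HC(\g,e)$, and $c_i=Z_1^{k_1}\cdots Z_N^{k_N}$. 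Condition (a) of that lemma is exactly part~(a) proved above, while condition (b) asserts that $Z_\HC(\g,e)$ is generated over $Z_{\HC,p}(\g,e)$ by these monomials. Once more passing to the graded level, $\gr Z_\HC(\g,e)=\kk[\gr Z_1,\dots,\gr Z_N]=U(\g^e)^{G^e}$ by Lemma~\ref{L:Ucentralisercentre}(a), and the corresponding statement over $Z_p(\g^e)^{G^e}$ is the second half of Lemma~\ref{L:Ucentralisercentre}(b). To close the argument one needs $\gr Z_{\HC,p}(\g,e)=Z_p(\g^e)^{G^e}$; the inclusion $\sub$ is automatic from the two identifications just obtained, and the reverse inclusion can be extracted from the description $Z_{\HC,p}(\g,e)=\pr(Z_p(\g)^G)$ by lifting $G^e$-invariant generators of $Z_p(\g^e)$ to $G$-invariant elements of $Z_p(\g)$ and projecting. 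Lemma~\ref{L:filtfree} then delivers condition~(b), completing the proof.

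The main obstacle is the explicit loop-top computation showing $\gr Z_p(\g,e)=Z_p(\g^e)$, which is where the additivity of $\xi_\p$ (via Jacobson's identity) is essential and where the non-lining-up of $\phi(Z_p(\Ynls))$ with $Z_p(\g,e)$ must be circumvented. The secondary difficulty is verifying that $\gr$ commutes with the intersection defining $Z_{\HC,p}(\g,e)$, which is accomplished by exhibiting suitable invariant lifts.
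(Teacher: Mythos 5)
Your treatment of part (a) is essentially the paper's: identify $\gr Z_p(\g,e)=Z_p(\g^e)$ using Lemma~\ref{L:ZpUgegens} and the loop-top formulas \eqref{e:Wgensloop}, then apply Lemma~\ref{L:Ucentralisercentre}(b) and lift with Lemma~\ref{L:filtfree}. (The paper phrases the final step as transporting Theorem~\ref{T:centreY}(d) through $\gr\phi$, but since that theorem was itself proved via Lemma~\ref{L:Ucentralisercentre}(b), the two routes are equivalent in substance.) You correctly recognised that $\phi$ cannot simply be used to match the two $p$-centres and that the matching happens only at the associated graded level; the paper's proof depends on exactly this point, namely that $\gr\phi$ carries $\gr Z_p(\Ynls)$ onto $\gr Z_p(\g,e)$ because both identify with $Z_p(\g^e)$ under the respective identifications of graded algebras with $U(\g^e)$.

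For part (b), however, you take a genuinely different route that has a gap. You propose verifying condition (b) of Lemma~\ref{L:tensorprodpresentation} by passing to the loop graded, which requires proving that $\gr Z_{\HC,p}(\g,e)=Z_p(\g^e)^{G^e}$. The inclusion $\gr Z_{\HC,p}(\g,e)\sub Z_p(\g^e)^{G^e}$ does indeed follow from $\gr(A\cap B)\sub\gr A\cap\gr B$, but the reverse inclusion is the one that matters, and your proposed ``lift $G^e$-invariant generators of $Z_p(\g^e)$ to $G$-invariant elements of $Z_p(\g)$ and project'' is not justified: it amounts to a surjectivity statement for a Kostant-slice-type restriction map on $p$-centres in positive characteristic, together with a compatibility of $\pr$ with the loop top terms, neither of which is established in the paper or in your sketch. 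The paper sidesteps the entire issue by never computing $\gr Z_{\HC,p}(\g,e)$: it applies Lemma~\ref{L:Ucentralisercentre}(b) in the special case $e=0$ (where the generators $z_r$ become the Capelli coefficients $Z^{(r)}$ of \eqref{e:ZcentralUg}), obtaining that $Z_\HC(\g)=U(\g)^G$ is generated over $Z_p(\g)^G$ by the $p$-restricted $Z^{(r)}$-monomials, and then descends through $\pr$ using Lemma~\ref{L:HClemmaUg2}, which gives $\pr(Z^{(r)})=Z_r$, together with the easy inclusion $\pr(Z_p(\g)^G)\sub Z_{\HC,p}(\g,e)$. If you wish to save your graded approach you would need to supply the surjectivity argument, but the direct descent-via-$\pr$ argument is both shorter and avoids the obstruction entirely.
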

\end{samepage}

\begin{proof}
By Lemma~\ref{L:ZpUgegens} and the formulas given in \eqref{e:Wgensloop}, we
have $\gr Z_p(\g,e) = Z_p(\g^e)$.  Further, by Theorem~\ref{T:centreY}(b) we have $\gr Z_p(\Ynls) = Z_p(\g^e)$.
The isomorphism $\phi : \Ynls \isoto U(\g,e)$ is filtered
with respect to the loop filtration by Theorem~\ref{T:isoandPBW}, and sends
$\dZ_r \in Z_{\HC}(\Ynls)$
to $Z_r \in Z_{\HC}(\g,e)$.  Now (a) follows from Theorem~\ref{T:centreY}(c).

To prove (b), we apply Lemma~\ref{L:tensorprodpresentation}, with $B = Z_p(\g,e)$ and $C = Z_{\HC}(\g,e)$,
and the set of generators $\{c_1,\dots,c_m\} = \{Z_1^{k_1} \cdots Z_N^{k_N} \mid 0 \le k_i < p\}$.
The first condition that we need to verify is given in (a), so we are left to verify that
$Z_{\HC}(\g,e)$ is generated as a $Z_{\HC,p}(\g,e)$ by $\{Z_1^{k_1} \cdots Z_N^{k_N} \mid 0 \le k_i < p\}$.
As explained after Lemma~\ref{L:Ucentralisercentre}, in the case $e =0$, we have $z_r = Z^{(r)}$.
Thus from this lemma we obtain that $Z_{\HC}(\g)$ is generated as a $Z_p(\g)^G$-module by
$\{(Z^{(1)})^{k_1} \cdots (Z^{(N)})^{k_N} \mid 0 \le k_i < p\}$.  Since $\pr$ sends
$Z^{(r)}$ to $Z_r$ by Lemma~\ref{L:HClemmaUg2} we deduce the desired result.
\end{proof}

We set up some notation for a basis of $Z(\g,e)$. For $(\bu, \bt, \bv) \in \bI_F \times \bI_D \times \bI_E$
and $\bw \in \{0,1,\dots,p-1\}^N$, we define
\begin{align*}
\begin{array}{ll} \xi_\p(\gr' F)^\bu \xi_\p(\gr' D)^\bt \xi_\p(\gr' E)^\bv Z^\bw := \\
\ \ \ \ \ \ \ \ \ \ \ \ \  \prod \xi_\p(\gr' F_{i,j}^{(r)})^{u_{i,j}^{(r)}} \prod \xi_\p(\gr' D_i^{(r)})^{t_i^{(r)}} \prod \xi_\p(\gr' E_{i,j}^{(r)})^{v_{i,j}^{(r)}} \prod Z_i^{w_i}.
 \end{array}
\end{align*}
Then
the ordered monomials
\begin{equation} \label{e:Zgebasis}
\{\xi_\p(\gr' F)^\bu \xi_\p(\gr' D)^\bt \xi_\p(\gr' E)^\bv Z^\bw \mid (\bu, \bt, \bv, \bw) \in \bI_F \times \bI_D \times \bI_E \times \{0,\dots,p-1\}^N\}
\end{equation}
form a basis for $Z(\g,e)$.

\subsection{Restricted finite $W$-algebras}
\label{ss:restrictedW}
We move on to recall the definition of the restricted $W$-algebra $U^{[p]}(\g,e)$.
We write $Z_p(\p)_+$ for the ideal of $Z_p(\p)$ generated by
$\{x^p - x^{[p]}  \mid x \in \p\}$, so the restricted enveloping algebra of $\p$
is $U^{[p]}(\p) = U(\p)/U(\p) Z_p(\p)_+$.
Then the {\em restricted $W$-algebra} is defined as
\begin{equation*}
U^{[p]}(\g,e) := U(\g,e) / (U(\g,e) \cap U(\p)Z_p(\p)_+).
\end{equation*}
Since, the kernel of the restriction of the projection $U(\p) \onto U_0(\p)$
to $U(\g,e)$ is $U(\g,e) \cap U(\p)Z_p(\p)_+$, we can identify
$U^{[p]}(\g,e)$ with the image of $U(\g,e)$ in $U^{[p]}(\p)$.

By \cite[Theorem~8.4]{GTmod}, we have that $U(\g,e)$ is free of rank $p^{\dim \g^e}$ over
$Z_p(\g,e)$, and thus that $\dim U^{[p]}(\g,e) = p^{\dim \g^e}$.
We note that each of the elements in \eqref{e:Zpgegenerators} lies in
$U(\g,e) \cap Z_p(\p)_+$, and we let $Z_p(\g,e)_+$ be the ideal of $Z_p(\g,e)$ generated
by these elements.  By Lemma~\ref{L:ZpUgegens}(a), we have that $Z_p(\g,e)_+$ is a maximal
ideal of $Z_p(\g,e)$, and it follows that $Z_p(\g,e)_+ = U(\g,e) \cap Z_p(\p)_+$.
By using the formulas given in \eqref{e:Wgensloop},
and a filtration argument we see that $U(\g,e)/U(\g,e)Z_p(\g,e)_+$ is spanned by the $p$-restricted
monomials in the elements in \eqref{e:Zpgegenerators}.
Hence, we see that $U(\g,e) \cap U(\p)Z_p(\p)_+ = U(\g,e)Z_p(\g,e)_+$, and obtain the basis
\begin{equation}
\label{e:PBWrestW}
\{F^\bu D^\bt E^\bv + U(\g,e)Z_p(\g,e)_+ \mid (\bu,\bt,\bv) \in \bI_p\}
\end{equation}
of $U^{[p]}(\g,e)$.

\section{Highest weight modules for $Y_{n,l}(\sigma)$ and $U(\g,e)$} \label{S:hwY}

For our proof of Theorem~\ref{T:restricted}, we require some results about
highest weight vectors in modules for $Y_{n,l}(\sigma)$ and $U(\g,e)$.  In this section
we cover the required material, with the key results being Lemmas~\ref{L:Birpaction}
and \ref{L:DonUh}.  We continue to use the notation from Sections \ref{S:YandW} and \ref{S:cent}.

\subsection{Torus actions}
\label{ss:torusactions}
Before discussing highest weight theory we have to introduce the
underlying torus actions.

Let $T_n$ be the maximal torus of $\GL_n(\kk)$ of diagonal matrices.
We write $\{\epsilon_1,\dots,\epsilon_n\}$ for the standard basis of the
character group $X^*(T_n)$ of $T_n$, i.e.\ $\epsilon_i : T_n \to \kk^\times$
is defined by $\epsilon_i(\diag(t_1,\dots,t_n)) = t_i$.  The positive
weights in $X^*(T_n)$ are
$X^*_+(T_n) = \{\sum_{i=1}^n a_i \epsilon_i \in X^*(T) \mid a_i \in \Z, \, a_i \ge a_{i+1} \text{ for all } i \text{ and } a_1 > a_n\}$.

Now let $T$ be the maximal torus of $G$ of diagonal matrices, and let $T^e$ be the centralizer
of $e$ in $T$.
We can describe $T^e$ explicitly in terms of certain cocharacters.
Define $\tau_1,\dots,\tau_n : \kk^\times \to T$, where
$\tau_i(t)$ is the diagonal matrix with $j$th entry equal to $t$ if
$\row(j) = i$
and entry $1$ otherwise.  Then we have $T^e = \{ \prod_{i=1}^n \tau_i(t_i) \mid t_i \in \kk^\times\}$.
Thus we have an isomorphism
\begin{equation} \label{e:Tniso}
T_n \isoto T^e
\end{equation}
which sends $\diag(t_1,\dots,t_n)$
to $\prod_{i=1}^n \tau_i(t_i)$.  From now on we use the above isomorphism
to identify $T_e$ with $T_n$.
It is a straightforward to see that
the basis element $c_{i,j}^{(r)}$ of $\g^e$ is a $T_n$-weight vector with
weight $\epsilon_i - \epsilon_j$.

We note that the adjoint action of $T^e$ on $U(\g)$ restricts to
an adjoint action on $U(\g,e)$, so we have an
action of $T_n$ on $U(\g,e)$.
By inspection of the formula for $D_i^{(r)}$ in \eqref{e:Dir}, we see that
it is fixed by $T_n$.  Similarly, by considering the formula
for $E_i^{(r)}$ given in \cite[Section~4]{GTmin}, we see that
$E_i^{(r)}$ is a $T_n$-weight vector with weight $\epsilon_i - \epsilon_{i+1}$; and
then deduce, using \eqref{e:Eij} that $E_{i,j}^{(r)}$ has $T_n$-weight $\epsilon_i - \epsilon_j$.
Similarly, we see that $F_{i,j}^{(r)}$ has $T_n$-weight $\epsilon_j - \epsilon_i$.

Further, we note that the action on $T_n$ on $U(\g,e)$ is filtered for
the loop filtration, so there is an action
of $T_n$ on $\gr U(\g,e)$.  Under the identification $\gr U(\g,e) \iso U(\g^e)$ given
by $S_{-\eta}$ in \eqref{e:S-eta}, this action coincides with the natural action
of $T_n \iso T^e$ on $U(\g^e)$.

By considering the relations for
$\Yns$ given in \cite[Theorem~4.15]{BT} and the definitions
of $E_{i,j}^{(r)}$ and $F_{i,j}^{(r)}$ given in \eqref{e:Eij},
we see that there is an action of $T_n$ on $\Yns$ by algebra automorphisms, such that $D_i^{(r)}$ is fixed by $T_n$, the weight
of $E_{i,j}^{(r)}$ is $\epsilon_i - \epsilon_j$, and the weight of $F_{i,j}^{(r)}$ is $\epsilon_j - \epsilon_i$.
Further, this action of $T_n$ is filtered for the loop filtration, and through the isomorphism
$\psi: U(\cns) \isoto \gr \Yns$  in \eqref{e:isogrTSY} it corresponds to the natural action of $T_n$ on $U(\cns)$.

We note that the ideal $I_{n,l}$ is $T_n$-stable, so that there is an induced action of $T_n$ on $\Ynls$.
From the description of the action of $T_n$ on $\Yns$ and on $U(\g,e)$ above,
we see that the isomorphism $\phi : \Ynls \isoto U(\g,e)$ in \eqref{e:phi} is $T_n$-equivariant.

\subsection{Highest weight modules for $\Ynls$} \label{ss:hwY}
For our proof of Theorem~\ref{T:restricted}, we require some theory of
highest weight modules for $\Ynls$.  We outline what we need below, much of which
is a modular analogue of some results in \cite[Chapter 6]{BKrep}, though here we take
a more elementary approach to some of the results we require.
The key result in this subsection is Lemma~\ref{L:Birpaction}, which tells us how the
elements $\dB_i^{(rp)}$ act on highest weight vectors.

We recall that a PBW basis $\{\dF^\bu \dD^\bt \dE^\bv \mid (\bu,\bt,\bv) \in \bI_F \times \bI_D \times \bI_E\}$ of
$\Ynls$ is given in \eqref{e:YnlsPBWbasis}.
In the discussion below we also require the ordered sets $\bJ_F$, $\bJ_D$ and $\bJ_E$, which are defined
before \eqref{e:YnlsPBWbasis}, and used to fix the order in the PBW monomials.
Since each $F_{i,j}^{(r)}$, $D_i^{(r)}$ and $E_{i,j}^{(r)}$
is a $T_n$-weight vector we see that the elements of the above PBW basis of $\Ynls$
are also $T_n$-weights.
In order to define Verma modules for $\Ynls$ we fix $\ba = (a_i^{(r)} \mid 1 \le i \le n, 1 \le r \le p_i) \in \kk^N$.
We use this tuple to modify the basis given in \eqref{e:YnlsPBWbasis} by setting
\begin{align*}
\dF^\bu (\dD - \ba)^\bt \dE^\bv = \prod_{(i,j,r) \in \bJ_{F}} (\dF_{i,j}^{(r)})^{u_{i,j}^{(r)}} \prod_{(i,r) \in \bJ_{D}} (\dD_i^{(r)} - a_{i}^{(r)})^{t_i^{(r)}}
\prod_{(i,j,r) \in \bJ_{E}} (\dE_{i,j}^{(r)})^{v_{i,j}^{(r)}}.
\end{align*}
Then we see that
\begin{align}
\label{e:PBWmonomialsfromA}
\{\dF^\bu (\dD - \ba)^\bt \dE^\bv \mid (\bu,\bt,\bv) \in \bI_F \times \bI_D \times \bI_E\}.
\end{align}
forms a basis for $\Ynls$.
Also we note that these basis elements are $T_n$-weight vectors, and that the $T_n$-weight of
$\dF^\bu (\dD - \ba)^\bt \dE^\bv$ is the same as that of $\dF^\bu \dD^\bt \dE^\bv$.
We define $\cM(\ba)$ to be the set of monomials \eqref{e:PBWmonomialsfromA}
for which $\bt \ne 0$ or $\bv \ne 0$, and write $I(\ba)$ for the
subspace of $\Ynls$, which has these as a basis.

\begin{Lemma}
\label{L:definingVermas}
Let $\ba = (a_i^{(r)} \mid 1 \le i \le n, 1 \le r \le p_i) \in \kk^N$, and define $I(\ba)$ as
above. Then:
\begin{enumerate}
\item[(a)] any $T_n$-weight vector in $\Ynls$ with weight
in $X_+^*(T_n)$ lies in  $I(\ba)$; and
\item[(b)] $I(\ba)$ is a left ideal of $\Ynls$.
\end{enumerate}
\end{Lemma}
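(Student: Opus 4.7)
For (a), I would proceed by weight decomposition. Each PBW basis element in \eqref{e:PBWmonomialsfromA} is a $T_n$-weight vector (by the description of the torus action in \S\ref{ss:torusactions}); since $y$ has weight $\lambda$, only summands of weight $\lambda$ can appear in its expansion. The only basis elements not already lying in $I(\ba)$ are the pure monomials $\dF^\bu$ (those with $\bt = \bv = 0$), whose weight is $\sum_{i<j,\,r} u_{i,j}^{(r)}(\epsilon_j - \epsilon_i)$. Writing this as $\sum_k b_k \epsilon_k$, one sees that $b_1 = -\sum_{j>1,r} u_{1,j}^{(r)} \le 0$ while $b_n = \sum_{i<n,r} u_{i,n}^{(r)} \ge 0$, whereas $\lambda = \sum a_k\epsilon_k \in X^*_+(T_n)$ requires $a_1 > a_n$. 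Hence no $\dF^\bu$ can occur in the expansion of $y$, and $y \in I(\ba)$.

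For (b), I would realise $I(\ba)$ as the kernel of a canonical $\Ynls$-module surjection onto a Verma-type induced module. Let $\Ynls^{\geq}$ denote the subalgebra of $\Ynls$ generated by $\{\dD_i^{(r)}\} \cup \{\dE_i^{(r)}\}$. By Theorem \ref{T:isoandPBW}, $\Ynls^{\geq}$ has $\kk$-basis $\{(\dD-\ba)^\bt\dE^\bv : \bt \in \bI_D,\ \bv \in \bI_E\}$, and the PBW theorem gives the triangular decomposition $\Ynls = \bigoplus_{\bu \in \bI_F} \dF^\bu\cdot\Ynls^{\geq}$, so $\Ynls$ is free as a right $\Ynls^{\geq}$-module on $\{\dF^\bu\}$. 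The key step is to verify that the linear functional $\varepsilon_\ba : \Ynls^{\geq} \to \kk$ defined on the PBW basis by $(\dD-\ba)^\bt\dE^\bv \mapsto \delta_{\bt,0}\delta_{\bv,0}$ is an algebra homomorphism — equivalently, that $\dD_i^{(r)}\mapsto a_i^{(r)}$ and $\dE_i^{(r)}\mapsto 0$ extends to a character of $\Ynls^{\geq}$. Inspection of the defining relations from \cite[Theorem~4.15]{BT} restricted to $\Ynls^{\geq}$ shows that every nontrivial right-hand side lies in the two-sided ideal of $\Ynls^{\geq}$ generated by the $\dE_i^{(r)}$, hence maps to $0$; alternatively one can transport the character from the complex case \cite[Chapter~6]{BKrep} through the integral form $Y_{n,l}^\Z(\sigma)$ of \S\ref{ss:Zforms} using Corollary \ref{C:redmodp}.

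Granted $\varepsilon_\ba$, I would form the induced module $M(\ba) := \Ynls \otimes_{\Ynls^{\geq}} \kk_\ba$. The freeness of $\Ynls$ over $\Ynls^{\geq}$ equips $M(\ba)$ with $\kk$-basis $\{\dF^\bu \otimes 1_\ba\}$, and the canonical $\Ynls$-linear surjection $\Ynls \onto M(\ba)$ has kernel $\Ynls\cdot\ker(\varepsilon_\ba)$. Since $\ker(\varepsilon_\ba)$ is two-sided in $\Ynls^{\geq}$ and $\Ynls = \bigoplus_\bu \dF^\bu\cdot\Ynls^{\geq}$, this kernel equals $\bigoplus_\bu \dF^\bu\cdot\ker(\varepsilon_\ba)$, which coincides with $\sspan\{\dF^\bu(\dD-\ba)^\bt\dE^\bv : (\bt,\bv)\neq(0,0)\} = I(\ba)$ by direct comparison of PBW monomials; hence $I(\ba)$ is a left ideal. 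The main obstacle is the verification that $\varepsilon_\ba$ is a character: the defining-relation check is elementary but tedious, so the reduction-mod-$p$ route via the integral form provides the cleanest path.
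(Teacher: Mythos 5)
Your proof of part (a) is essentially the paper's, with a bit of extra bookkeeping: both identify the monomials $\dF^\bu$ as the only basis elements outside $I(\ba)$ and then observe that no nonzero $T_n$-weight of such a monomial lies in $X^*_+(T_n)$.

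Your part (b) is a genuinely different route. The paper works directly in $\Ynls$: it introduces the canonical filtration, uses commutativity of the associated graded algebra to organize a double induction (first on canonical degree, then on the length of $X'$), and repeatedly pushes factors of $\dF$ past $\dD$ and $\dE$ modulo lower-order terms. Your approach instead packages the statement as a triangular decomposition: identify $\Ynls$ as a free right module over the ``Borel'' subalgebra $\Ynls^{\ge}$, exhibit a character $\varepsilon_\ba$ of $\Ynls^{\ge}$, form the induced module, and read off $I(\ba) = \Ynls\ker\varepsilon_\ba$. This is more conceptual and brings the Verma module construction to the foreground immediately, at the cost of requiring some structural facts about $\Ynls^{\ge}$.

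There is a gap in your key step, though. Checking that the relations of \cite[Theorem 4.15]{BT} are ``compatible with $\varepsilon_\ba$'' does not by itself show that $\varepsilon_\ba$ is an algebra homomorphism on $\Ynls^{\ge}$: you would also need to know that $\Ynls^{\ge}$ is \emph{presented} by the generators $\dD_i^{(r)}, \dE_i^{(r)}$ together with precisely those relations (the natural map from the presented algebra onto the subalgebra could a priori have a kernel), and the paper nowhere records such a presentation. Likewise, the reduction-mod-$p$ alternative only transports a formula, not a well-definedness statement. The clean fix is the $T_n$-weight argument you are circling around but do not quite state: $\Ynls^{\ge}$ has only non-negative $T_n$-weights, its strictly positive weight subspace $(\Ynls^{\ge})_+ = \sspan\{(\dD-\ba)^\bt\dE^\bv : \bv\ne 0\}$ is therefore a two-sided ideal, and the quotient $\Ynls^{\ge}/(\Ynls^{\ge})_+$ is the commutative polynomial algebra $\kk[\dD_i^{(r)}]$, so $\varepsilon_\ba$ is the composite of the projection with evaluation at $\ba$ and is automatically a character. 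With this in place (and with the observation, also needing a word, that $\{\dD^\bt\dE^\bv\}$ really does span $\Ynls^{\ge}$ because the $D$-$D$, $D$-$E$ and $E$-$E$ relations produce no $F$ terms), the rest of your argument goes through and gives a somewhat slicker proof than the double induction in the paper.
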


\begin{proof}
For a monomial $\dF^\bu (\dD - \ba)^\bt \dE^\bv$
to have a positive weight, it must have $\bv \ne 0$, and thus lies in $I(\ba)$.
From this we can deduce (a) as these monomials give a basis of $\Ynls$.

For the proof of (b), we require another filtration of $\Ynls$, known
as the {\em canonical filtration}.
First we recall that the canonical filtration is defined on $\Yns$
by placing $E_{i,j}^{(r)}, D_i^{(r)}, F_{i,j}^{(r)}$ in filtered degree $r$,
then we get the induced filtration on $\Ynls$.
We write $\gr' \Yns$ and $\gr' \Ynls$ for the associated graded algebras
for the canonical filtrations.
As is remarked in \cite[\S4.2]{BT}, $\gr' \Yns$ is commutative, and thus
$\gr' \Ynls$ is also commutative.

Let $X' = \dF^{\bu'} (\dD-a)^{\bt'} \dE^{\bv'}$ be in the basis given in
\eqref{e:PBWmonomialsfromA} and $X = \dF^\bu (\dD-a)^\bt \dE^\bv \in \cM(\ba)$.
We write $\deg'(X'X)$ for the canonical degree of $X'X$ and proceed to prove that $X'X \in I(\ba)$ by
induction on $\deg'(X'X)$.
It is clear that (b) will follow immediately from this.

If $\deg'(X'X) = 0$, then $X', X \in \kk$ (and in fact $X = 0$) so the claim holds.
So we suppose that $\deg'(X'X) > 0$.

For our fixed value of $\deg'(X'X)$, we see that
we can reduce to the case where $\bu = 0$, by writing
$\dF^{\bu'} \dD^{\bt'} \dE^{\bv'} \dF^\bu$ as $\dF^{\bu'+\bu} \dD^{\bt'} \dE^{\bv'}$ plus a sum
of the PBW monomials in the basis given in \eqref{e:PBWmonomialsfromA} of strictly lower
canonical degree; here we use that $\gr' \Ynls$ is commutative.
Thus we assume that $X = (\dD-a)^\bt \dE^\bv$.
We define the length $\ell(X')$ to be the sum of the entries of all three tuples $\bu', \bt', \bv'$,
and now work by induction on $\ell(X')$, under the assumption that $X$ is of the form
$(\dD-a)^\bt \dE^\bv \in \cM(\ba)$.

If $\ell(X') = 0$, then
$X' = 1$, and trivially $X'X \in I(\ba)$.  So we assume that $\ell(X') > 0$.

Suppose that $\bv' = \bt' = 0$, then
we see that $X'X = \dF^{\bu'} (\dD - \ba)^{\bt} \dE^{\bv} \in \cM(\ba)$.

Next suppose that $\bv' = 0$ and $\bt' \ne 0$.
Let $(i_0,r_0)$ be largest with respect to our fixed order on $\bJ_D$ such that
$t_{i_0}'^{(r_0)} \ne 0$.  Define $\bs = (s_i^{(r)} \mid 1\leq i \leq n, 1 \leq r \leq p_i) \in \bI_D$ by $s_i^{(r)} = \delta_{i,i_0} \delta_{r, r_0}$.
We see that $X'X = (\dF^{\bu'} (\dD-\ba)^{\bt' - \bs}) ((\dD-\ba)^{\bt + \bs} \dE^\bv)$,
as the $D_i^{(r)}$ all commute with each other.
Since $\ell(\dF^{\bu'} (\dD-\ba)^{\bt' - \bs}) < \ell(X')$ and $(\dD-\ba)^{\bt + \bs} \dE^\bv \in \cM(\ba)$ is of the required form,
we conclude that $X'X \in I(\ba)$ by induction
on $\ell(X')$.

Last we consider the case $\bv' \neq 0$.  Let $(i_0,j_0,r_0)$ be largest with respect to our fixed order on $\bJ_E$ such that
$v_{i_0,j_0}'^{(r_0)} \ne 0$. Define $\bq = (q_{i,j}^{(r)} \mid 1\leq i< j  \leq n, 1 \leq r \leq p_i) \in \bI_E$ by
$q_{i,j}^{(r)} = \delta_{i,i_0} \delta_{j,j_0} \delta_{r, r_0}$.
Also write $\dE^\bv = E^{\bv_<} \dE^{\bv_\ge}$, where $\dE^{\bv_<}$ is the
submonomial of $\dE^\bv$ consisting of the $\dE_{i,j}^{(r)}$ for $(i,j,r)$ up
to $(i_0,j_0,r_0)$ in our fixed order of $\bJ_E$, and $\dE^{\bv_\ge}$ is the remaining submonomial.
We have
\begin{align*}
X'X &= (\dF^{\bu'} (\dD-\ba)^{\bt'} \dE^{\bv'}) ((\dD-\ba)^\bt \dE^\bv) \\
&= (\dF^{\bu'} (\dD-\ba)^{\bt'} \dE^{\bv'-\bq}) \left(
(\dD-\ba)^\bt \dE^{\bv+\bq}
+
[\dE_{i_0,j_0}^{(r_0)},(\dD-a)^\bt \dE^{\bv_<}] \dE^{\bv_{\ge}} \right). \\
\end{align*}
The first term $(\dF^{\bu'} (\dD-\ba)^{\bt'} \dE^{\bv'-\bq}) ((\dD-\ba)^\bt \dE^{\bv+\bq})$ above satisfies
$\ell(\dF^{\bu'} (\dD-\ba)^{\bt'} \dE^{\bv'-\bq}) < \ell(X')$ and $(\dD-a)^\bt \dE^{\bv+\bq} \in \cM(\ba)$ is of the required form.
So we conclude that this term lies in $I(\ba)$ by induction on $\ell(X')$.  We are left to consider the term
$Y = (\dF^{\bu'} (\dD-\ba)^{\bt'} \dE^{\bv'-\bq})([\dE_{i_0,j_0}^{(r_0)},(\dD-a)^\bt \dE^{\bv_<}] \dE^{\bv_{\ge}})$.
As the associated graded algebra of $\Ynls$ for the canonical filtration
is commutative, we have that $\deg' Y < \deg' X'X$.  Next we see that $[\dE_{i_0,j_0}^{(r_0)},(\dD-a)^\bt \dE^{\bv_<}] \dE^{\bv_{\ge}}$
has positive $T_n$-weight, so lies in $I(\ba)$ by (a).  Therefore, it can be rewritten as a linear combination of monomials
in $\cM(\ba)$.  Let $\tilde X$ be a monomial from $\cM(\ba)$ occurring in this sum.  Then we know that
$(\dF^{\bu'} (\dD-\ba)^{\bt'} \dE^{\bv'-\bs}) \tilde X \in I(\ba)$, by induction on $\deg'(X'X)$.  Putting this all together we obtain that
$X'X \in I(\ba)$ as required, which completes the double induction.
\end{proof}

Now we define the {\em Verma module}
\begin{equation*}
M(\ba) : = \Ynls/I(\ba).
\end{equation*}
From the PBW theorem, it is clear that a basis of $M(\ba)$ is given by $\{F^\bu + I(\ba) \mid \bu \in \bI_F\}$.
It follows immediately from the definition of $I(\ba)$ that $\dD_i^{(r)}$ acts on $1 + I(\ba)$
as $a_i^{(r)}$ for all $(i,r) \in \bJ_D$ and that
$\dE_{i,j}^{(r)}$ annihilates $1 + I(\ba)$ for all $(i,j,r) \in \bJ_E$.
In fact, something much stronger is true.

\begin{Lemma}
\label{L:thingsvanish}
The following elements of $\Ynls$ annihilate $1 + I(\ba) \in M(\ba)$:
\begin{enumerate}
\item[(a)] $\dE_{i,j}^{(r)}$ for all $1 \le i < j \le n$, $r > s_{i,j}$.
\item[(b)] $\dD_i^{(r)}$ for all $1 \le  i \le n$, $r > p_i$;
\end{enumerate}
\end{Lemma}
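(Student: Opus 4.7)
My plan is to deduce both parts from the structural results already established. For part (a), the element $\dE_{i,j}^{(r)}$ with $i < j$ is a $T_n$-weight vector of weight $\epsilon_i - \epsilon_j$. Since $i < j$, the coefficient sequence of this weight is non-increasing with value $1$ at position $i$ and $-1$ at position $j$ and zeros elsewhere, so $\epsilon_i - \epsilon_j$ lies in $X_+^*(T_n)$. Applying Lemma \ref{L:definingVermas}(a), I conclude $\dE_{i,j}^{(r)} \in I(\ba)$, and hence $\dE_{i,j}^{(r)}$ annihilates $1 + I(\ba)$.

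For part (b), I would exploit the Gauss decomposition of the $T$-matrix together with the vanishing of its diagonal entries at high order. By \eqref{e:Tij} and the conventions $F_{i,i}(u) = E_{i,i}(u) = 1$, we have
\[
\dT_{i,i}(u) = \dD_i(u) + \sum_{k=1}^{i-1} \dF_{k,i}(u) \dD_k(u) \dE_{k,i}(u).
\]
Corollary \ref{C:Tij0} asserts that $\dT_{i,i}^{(r)} = 0$ in $\Ynls$ whenever $r > p_i + s_{i,i} = p_i$. Extracting the coefficient of $u^{-r}$ for $r > p_i$ gives
\[
\dD_i^{(r)} = - \sum_{k=1}^{i-1} \sum_{a+b+c = r} \dF_{k,i}^{(a)} \dD_k^{(b)} \dE_{k,i}^{(c)},
\]
where the inner sum ranges over $a > s_{i,k}$, $b \ge 0$ (with the convention $\dD_k^{(0)} = 1$) and $c > s_{k,i}$. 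In particular $c \ge 1$ and $k < i$, so each $\dE_{k,i}^{(c)}$ appearing here already belongs to $I(\ba)$ by part (a). Since $I(\ba)$ is a left ideal of $\Ynls$ by Lemma \ref{L:definingVermas}(b), each summand $\dF_{k,i}^{(a)} \dD_k^{(b)} \dE_{k,i}^{(c)} = (\dF_{k,i}^{(a)} \dD_k^{(b)}) \cdot \dE_{k,i}^{(c)}$ lies in $I(\ba)$, and thus $\dD_i^{(r)} \in I(\ba)$.

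Neither step poses a real obstacle: the key inputs are already available, namely the weight decomposition underpinning Lemma \ref{L:definingVermas}(a), the left ideal property of $I(\ba)$ from Lemma \ref{L:definingVermas}(b), and the triangular decomposition \eqref{e:Tij} combined with Corollary \ref{C:Tij0}. The only delicate point to verify carefully is the index range in the Gauss decomposition, ensuring that $c > s_{k,i} \ge 0$ forces $c \ge 1$ so that the $\dE_{k,i}^{(c)}$ factor is genuinely present and in $I(\ba)$ in every summand.
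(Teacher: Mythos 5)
Your proof follows exactly the same route as the paper: for part (a) you appeal to the $T_n$-weight of $\dE_{i,j}^{(r)}$ together with Lemma~\ref{L:definingVermas}(a), and for part (b) you use the Gauss decomposition $T(u)=F(u)D(u)E(u)$ and Corollary~\ref{C:Tij0}. Part (b) is correct (and slightly cleaner than the paper's formula, which has a harmless typo $k=1,\dots,i$ and $\dE_{k,j}$ in place of $\dE_{k,i}$).

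There is, however, a genuine error in your justification of part (a). You assert that the coefficient sequence of $\epsilon_i-\epsilon_j$ (value $1$ at position $i$, $-1$ at position $j$, $0$ elsewhere) is non-increasing, so that $\epsilon_i-\epsilon_j\in X_+^*(T_n)$. That sequence is non-increasing only when $i=1$ and $j=n$. For example with $n=3$, $i=2$, $j=3$ the sequence is $(0,1,-1)$, which jumps up from position $1$ to position $2$; thus $\epsilon_2-\epsilon_3\notin X_+^*(T_n)$ as defined in \S\ref{ss:torusactions}. In general $\epsilon_i-\epsilon_j$ for $i<j$ is a \emph{positive root}, not a dominant weight, and Lemma~\ref{L:definingVermas}(a) as literally stated does not apply. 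What is really used is the observation made inside the proof of Lemma~\ref{L:definingVermas}(a): for a PBW monomial $\dF^\bu(\dD-\ba)^\bt\dE^\bv$, if $\bv=0$ then its weight lies in the cone spanned by the $\epsilon_j-\epsilon_i$ with $i<j$; so any monomial whose weight is a nonzero non-negative combination of positive roots must have $\bv\neq 0$ and hence lies in $I(\ba)$. Since $\epsilon_i-\epsilon_j$ is a single positive root, this gives the conclusion. (The paper's own proof is also loose here, writing only ``positive weight,'' but it does not make the false claim that $\epsilon_i-\epsilon_j\in X_+^*(T_n)$.) With this one correction your argument is sound.
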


\begin{proof}
Since $T_n$ acts on $E_{i,j}^{(r)}$ with a positive weight for all $1 \le i < j \le n$, $r > s_{i,j}$,
part (a) follows from Lemma~\ref{L:definingVermas}(a).

Using \eqref{e:Tij} we calculate that
\begin{equation*}
\dT_{i,i}^{(r)} = \dD_i^{(r)} + \sum_{k=1}^i \sum_{\substack{a > s_{i,k}, b \ge 0 , c > s_{k,j}\\ a + b + c = r}} \dF_{k,i}^{(a)} \dD_k^{(b)} \dE_{k,j}^{(c)}.
\end{equation*}
By Corollary \ref{C:Tij0} we know that $\dT_{i,i}^{(r)} = 0$ for $r > p_i$.  Also by (a), we know that
each $\dE_{k,j}^{(c)}$ on the righthand side of the above equation annihilates $1+I(\ba)$.  Hence,
we deduce that $\dD_i^{(r)}$ also annihilates $1+I(\ba)$ for $r > p_i$.
\end{proof}

Let $M$ be a $\Ynls$-module and let $v_+ \in M$.  We say that $v_+$ is a highest
weight vector of weight $\ba$ if $I(\ba)$ annihilates $v_+$. We say that $M$ is a highest weight module of weight $\ba$ if $M$ is generated by some highest weight vector of weight $\ba$.
The Verma modules $\{M(\ba) \mid \ba = (a_i^{(r)})_{1\le i \le n}^{1\le r \le p_i} \in \kk^N\}$ are defined to be the universal highest weight
modules. Thus if $v_+ \in M$ is a highest weight vector of weight $\ba$, then there is a unique
map $M(\ba) \to M$ sending $1 + I(\ba)$ to $v_+$.

It is helpful for us to relabel the Verma modules, following the approach of \cite[Section~6.1]{BKrep}.  Suppose we have a highest weight vector $v_+$
with weight $\ba$ in some $\Ynls$-module.  Then by Lemma~\ref{L:thingsvanish} we know that
$$
u^{p_i} \dD_i(u) v_+ = (u^{p_i} + a_i^{(1)}u^{p_i-1} + \dots + a_i^{(p_i-1)} u + a_i^{(p_i)}) v_+.
$$
By factorising and introducing a shift, we have that
\begin{equation} \label{e:Dact}
u^{p_i} \dD_i(u) v_+ = (u+(i-1)+a_{i,1})(u+(i-1)+a_{i,2})\dots(u+(i-1)+a_{i,p_i}).
\end{equation}
These are the formulas given in \cite[(6.1)--(6.3)]{BKrep}.

We let $A$ be the $\pi$-tableau with entries $\{a_{i,j} \mid j = 1,\dots,p_i\}$ on the $i$th row,
and note that $A$ is only defined up to row equivalence.
We denote the row equivalence class of $A$ by $\bar A$, and from now on we refer
to $\bar A$ as the weight of $v_+$, rather than $\ba$.
This allows an alternative parametrization of the Verma modules, where
we write $M(\bar A)$ instead of $M(\ba)$; we use the notation $v_{A,+}$ for the highest weight vector of $M(\bar A)$.

We define $\Ynls^0$ to be the (commutative) subalgebra
of $\Ynls$ generated by $\{\dD_i^{(r)} \mid 1\leq i \leq n, 0 < r \leq p_i\}$
and note that $\Ynls^0$ is in fact a polynomials algebra on these
generators.
The next lemma is a direct consequence of the Nullstellensatz,
but we record it for convenience of reference.

\begin{Lemma} \label{L:Dacting}
Let $d \in \Ynls^0$.  Then $dv_{A,+} = 0$ for all
$A \in \Tab(\pi)$ if and only if $d = 0$.
\end{Lemma}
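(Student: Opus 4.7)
The plan is to translate the statement into a polynomial identity on $\kk^N$ and then invoke the Nullstellensatz. First, because the generators $\{\dD_i^{(r)} \mid 1 \le i \le n,\ 0 < r \le p_i\}$ of $\Ynls^0$ are algebraically independent, any $d \in \Ynls^0$ is of the form $d = P(\dD_i^{(r)})$ for a unique polynomial $P \in \kk[y_i^{(r)} \mid 1 \le i \le n,\ 0 < r \le p_i]$. Given a tableau $A$ with row entries $a_{i,1},\dots,a_{i,p_i}$, the defining formula \eqref{e:Dact} shows that $\dD_i^{(r)} v_{A,+} = a_i^{(r)} v_{A,+}$, where $a_i^{(r)}$ is the coefficient of $u^{p_i-r}$ in $\prod_{j=1}^{p_i}(u+(i-1)+a_{i,j})$. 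Hence $d \cdot v_{A,+} = P(\ba)\, v_{A,+}$, and by the PBW basis of $M(\bar A)$ (i.e.\ $\{\dF^\bu + I(\ba) \mid \bu \in \bI_F\}$) we have $v_{A,+} \ne 0$, so $d v_{A,+} = 0$ if and only if $P(\ba) = 0$.

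Next I would observe that as $A$ ranges over $\Tab_\kk(\pi)$, the parameter $\ba = (a_i^{(r)})$ ranges over all of $\kk^N$. Indeed, for each $i$, any choice of $a_i^{(1)}, \dots, a_i^{(p_i)} \in \kk$ determines a monic polynomial of degree $p_i$ in $u$, and since $\kk$ is algebraically closed this polynomial factors into linear factors of the form $u + (i-1) + a_{i,j}$; the resulting entries $a_{i,j}$ fill the $i$th row of a tableau $A$ realising $\ba$.

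Combining these two steps, the hypothesis that $d v_{A,+} = 0$ for every $A \in \Tab_\kk(\pi)$ is equivalent to $P(\ba) = 0$ for every $\ba \in \kk^N$. Since $\kk$ is algebraically closed, hence infinite, this forces $P = 0$ as a polynomial, and so $d = 0$. The converse implication is trivial, so the lemma follows. No serious obstacle is anticipated; the only mild subtlety is ensuring that the assignment $A \mapsto \ba$ is surjective, which is precisely where algebraic closure of $\kk$ enters.
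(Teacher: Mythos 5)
Your proposal is correct and is precisely the argument the paper has in mind when it says the lemma is ``a direct consequence of the Nullstellensatz'': you spell out that $\Ynls^0$ is a polynomial algebra on the $\dD_i^{(r)}$, that these act by the scalars $a_i^{(r)}$ on $v_{A,+}\ne 0$, and (the one point that actually needs checking) that the map $A \mapsto \ba$ is surjective onto $\kk^N$ because $\kk$ is algebraically closed. Nothing further is needed.
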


A useful observation for us gives the action of the generators $\dZ_r$ of $Z_\HC(\Ynls)$ on
a highest weight vector of weight $A \in \Tab(\pi)$.  We
calculate
\begin{equation}
\begin{aligned}
\label{e:Zact}
\dZ(u)v_{A,+} &= u^{p_1} (u-1)^{p_2} \cdots (u-(n-1))^{p_n}\dD_1(u) \dD_2(u-1) \dots \dD_n(u-(n-1))v_{A,+} \\
&= u^{p_1} \dD_1(u) (u-1)^{p_2} \dD_2(u-1) \dots (u-(n-1))^{p_n} \dD_n(u-(n-1))v_{A,+} \\
&= (u+a_{1,1}) \dots (u+a_{1,p_1})(u+a_{2,1}) \dots (u+a_{n,p_n})v_{A,+}.
\end{aligned}
\end{equation}
Therefore, we see that $\dZ_r$ acts as $e_r(a_{i,j} \mid 1 \le i \le n, 1 \le j \le p_i)$,
where we recall that $e_r$ denotes the $r$th elementary symmetric polynomial.

Now we want to calculate the scalar by which $B_i^{(rp)}$ acts on
the highest weight vector $v_{A_,+}$.

\begin{Lemma} \label{L:Birpaction}
Let $A \in \Tab(\pi)$, let $1 \le k \le n$ and $1 \le r \le p_i$. Set $s := s(r) = r + \lfloor \frac{r-1}{p-1}\rfloor$ and
let $\bD_{i,r}$ be the set of all sequences $\bd = (d_0,d_1,d_2,\dots,d_s)$ of non-negative integers such that
$\sum_{j \ge 0} d_j = p_i$ and $rp = \sum_{j \ge 1} d_j (jp-j+1)$.
Then
\begin{align}
\label{e:Bacts}
\dB_i^{(rp)} v_{A,+} = \sum_{\bd \in \bD_{i,r}} \frac{(\sum_{j \ge 1} d_j)!}{\prod_{j \ge 1} d_j!} e_{\sum_{j \ge 1} d_j}(a_{i,1}^p-a_{i,1},\dots,a_{i,p_i}^p-a_{i,p_i})
\end{align}
Furthermore, there exist elements $\hat B_i^{(rp)} \in Z_p(\Ynls)$ related to $\dB_i^{(rp)}$ by a unitriangular change of variables, such that
$$
\hat B_i^{(rp)} v_{A,+} = e_r(a_{i,1}^p-a_{i,1},\dots,a_{i,p_i}^p-a_{i,p_i}).
$$
Finally, if $p > r$, then $\hat B_i^{(rp)} = \dot B_i^{(rp)}$.
\end{Lemma}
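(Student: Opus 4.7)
The plan is to compute $\dB_i(u)v_{A,+}$ as a scalar-valued formal power series in $u^{-1}$ and then read off the coefficient of $u^{-rp}$. The key observation enabling this is that the factors $\dD_i(u-j)$ for $j = 0,\ldots,p-1$ pairwise commute in $\Ynls[[u^{-1}]]$ (since the elements $\dD_i^{(r)}$ generate the commutative subalgebra $\Ynls^0$), so their product $\dB_i(u) = \prod_{j=0}^{p-1}\dD_i(u-j)$ acts on $v_{A,+}$ as a single scalar series which can be computed factor-by-factor and then multiplied together.

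Writing $\alpha_k := a_{i,k}+i-1$, the relation \eqref{e:Dact} together with Lemma~\ref{L:thingsvanish}(b) gives
\[
\dD_i(u-j)\,v_{A,+} \;=\; (u-j)^{-p_i}\prod_{k=1}^{p_i}(u-j+\alpha_k)\,v_{A,+}.
\]
Taking the product over $j$ and using the identity \eqref{e:Fppolya} (with $a = -\alpha_k$), together with the Fermat identity $(i-1)^p = i-1$ in $\F_p$, one simplifies $\prod_{j=0}^{p-1}(u-j+\alpha_k) = u^p-u+a_{i,k}^p-a_{i,k}$ and $\prod_{j=0}^{p-1}(u-j) = u^p - u$. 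Setting $\beta_k := a_{i,k}^p-a_{i,k}$ and cancelling, this yields the compact expression
\[
\dB_i(u)\,v_{A,+} \;=\; \prod_{k=1}^{p_i}\Big(1 + \frac{\beta_k}{u^p-u}\Big)\,v_{A,+} \;=\; \sum_{m=0}^{p_i} e_m(\beta_1,\ldots,\beta_{p_i})\,(u^p-u)^{-m}\,v_{A,+}.
\]
Expanding $(u^p-u)^{-1} = \sum_{j\ge 1}u^{-(jp-j+1)}$ as a geometric series, applying the multinomial theorem to $(u^p-u)^{-m}$, and relabelling with $d_0 := p_i-m$ produces exactly the stated formula. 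The truncation at $s = r + \lfloor(r-1)/(p-1)\rfloor$ is forced by the requirement $jp-j+1 \le rp$ on any $j$ with $d_j > 0$.

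For the second and third parts, I observe that the unique $\bd \in \bD_{i,r}$ with $\sum_{j\ge 1}d_j = r$ is $(p_i-r,r,0,\ldots,0)$: indeed $\sum_{j\ge 1}d_j((j-1)(p-1)) = 0$ combined with $d_j \ge 0$ forces $d_j = 0$ for $j \ge 2$. This tuple contributes $e_r(\beta_1,\ldots,\beta_{p_i})$ with multinomial coefficient $1$, while every other $\bd \in \bD_{i,r}$ contributes $e_m(\beta_1,\ldots,\beta_{p_i})$ with $m = r-k(p-1) < r$ for some integer $k \ge 1$ (since modulo $p-1$ we must have $m \equiv r$). Writing the first formula as
\[
\dB_i^{(rp)}\,v_{A,+} \;=\; e_r(\beta_1,\ldots,\beta_{p_i})\,v_{A,+} \;+\; \sum_{k\ge 1,\, r-k(p-1)\ge 1} C_{r,k}\,e_{r-k(p-1)}(\beta_1,\ldots,\beta_{p_i})\,v_{A,+}
\]
for certain scalars $C_{r,k} \in \F_p$, I would set $\hat B_i^{(p)} := \dB_i^{(p)}$ and inductively define
\[
\hat B_i^{(rp)} \;:=\; \dB_i^{(rp)} - \sum_{k\ge 1,\, r-k(p-1)\ge 1} C_{r,k}\,\hat B_i^{((r-k(p-1))p)}.
\]
This is visibly a unitriangular change of variables in the index $r$. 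Since each $\dB_i^{(mp)} \in Z_p(\Ynls)$ by \eqref{e:pgenerators2}, induction on $r$ gives $\hat B_i^{(rp)} \in Z_p(\Ynls)$ and $\hat B_i^{(rp)}\,v_{A,+} = e_r(\beta_1,\ldots,\beta_{p_i})\,v_{A,+}$. Finally, if $p > r$ then $(r-1)/(p-1) < 1$, so no integer $k \ge 1$ satisfies $r-k(p-1) \ge 1$ and the corrective sum is empty, giving $\hat B_i^{(rp)} = \dB_i^{(rp)}$.

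The main obstacle I anticipate is the combinatorial bookkeeping required to match the multinomial expansion with the precise parametrization by tuples $\bd \in \bD_{i,r}$ and to verify the cutoff $s$; once the compact closed form for $\dB_i(u)\,v_{A,+}$ is in hand, the remaining assertions follow from routine generating function manipulations and induction on $r$.
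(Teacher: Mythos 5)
Your proof is correct and takes essentially the same approach as the paper: both clear denominators, invoke the identity $\prod_{j=0}^{p-1}(t-a-j)=t^p-t-(a^p-a)$, arrive at the product $\prod_{k}(1+\beta_k(u^p-u)^{-1})$, and then read off the coefficient of $u^{-rp}$ to produce the sum over $\bD_{i,r}$. Where you diverge (mildly) is in the bookkeeping: you factor through elementary symmetric polynomials and the multinomial theorem rather than expanding the product of binomials directly, and you prove the third claim more cleanly by noting that the congruence $\sum_{j\ge 1}d_j\equiv r\pmod{p-1}$ together with $r-(p-1)\le 0$ when $p>r$ empties the corrective sum, replacing the paper's longer manipulation of the equation $p(\sum d_j+mp)=rp+mp$; you should, however, also explicitly record the inequality $\sum_{j\ge 1}d_j\le r$ (immediate from $jp-j+1\ge p$), since without it the congruence alone does not force $m<r$ for non-maximal tuples.
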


\begin{proof}
We consider $\left(\prod_{j=0}^{p-1} (u-j)^{p_i} \right) \dB_i(u) v_{A,+}$ and calculate
\begin{align*}
\left(\prod_{j=0}^{p-1} (u-j)^{p_i} \right) \dB_i(u) v_{A,+} &= \left(\prod_{j=0}^{p-1} (u-j)^{p_i} \dD_i(u-j)\right) v_{A,+} \\
&= \prod_{j=0}^{p-1} \prod_{k=1}^{p_i} (u-j+a_{i,k}) v_{A,+} \\
&= \prod_{k=1}^{p_i}\prod_{j=0}^{p-1}  (u+a_{i,k}-j) v_{A,+} \\
&= \prod_{k=1}^{p_i} \left((u+a_{i,k})^p-(u+a_{i,k})\right) v_{A,+} \\
&= \prod_{k=1}^{p_i} (u^p-u+(a_{i,k}^p-a_{i,k})) v_{A,+}.
\end{align*}
We explain some of the steps in the above calculation.  The first equality
just uses the definition of $\dB_i(u)$. To go from the first line to the second
we use the definition of the action of $\dD_i(u)$ in \eqref{e:Dact}.  Then to go from the third line
to the fourth we use \eqref{e:Fppolya}.

Also we have that $\prod_{j=0}^{p-1} (u-j)^{p_i} = (u^p-u)^{p_i}$ by \eqref{e:Fppoly}, so we obtain
\begin{align*}
(u^p-u)^{p_i} \dB_i(u) v_{A,+} &= \prod_{k=1}^{p_i} (u^p-u+(a_{i,k}^p-a_{i,k})) v_{A,+} \\
&= (u^p-u)^{p_i} \prod_{k=1}^{p_i} (1+(a_{i,k}^p-a_{i,k})(u^p-u)^{-1}) v_{A,+}.
\end{align*}
Thus
\begin{align} \label{e:Bprod}
\dB_i(u) v_{A,+} &= \prod_{k=1}^{p_i} (1+(a_{i,k}^p-a_{i,k})u^{-p}(1-u^{-(p-1)})^{-1}) v_{A,+} \nonumber \\
&= \prod_{k=1}^{p_i} \left(1+(a_{i,k}^p-a_{i,k}) u^{-p} \sum_{j \ge 0} u^{-j(p-1)}\right) v_{A,+} \nonumber \\
&= \prod_{k=1}^{p_i}  \left(1+(a_{i,k}^p-a_{i,k}) (u^{-p} +u^{-(2p-1)} + u^{-(3p-2)} + \dots \right) v_{A,+}.
\end{align}
The action of $\dB_i^{(rp)}$ on $v_+$ is determined by the coefficient of $u^{-rp}$ in the above expression.

Let $\bd \in \bD_{i,r}$ and let
$$
\binom{p_i}{d_0, d_1, \dots,d_s} = \frac{p_i!}{\prod_{j \ge 0} d_j!}.
$$
be the multinomial coefficient. By choosing the summand $1$ in $d_0$ of the
multiplicands in \eqref{e:Bprod} and choosing a summand $(a_{i,k}^p-a_{i,k})u^{pj-j+1}$ in $d_j$
of the multiplicands for each $1 \le j \le r$, we obtain a term which contributes to the coefficient of $u^{-rp}$.  The contribution
from all such terms will be a multiple of $e_{\sum_{j \ge 1} d_j}(a_{k,1}^p-a_{k,1},\dots,a_{k,p_k}^p-a_{k,p_k})$ and a straightforward counting argument shows that the coefficient on $e_{\sum_{j \ge 1} d_j}(a_{i,1}^p-a_{i,1},\dots,a_{i,p_i}^p-a_{i,p_i})$ which arises from $\bd \in \bD_{i,r}$ is
$$
\frac{\binom{p_i}{d_0, d_1, \dots, d_s}}{\binom{p_i}{\sum_{j \ge 1} d_j}} = \frac{(\sum_{j \ge 1} d_j)!}{\prod_{j \ge 1} d_j!}.
$$
We deduce that each $\bd \in \bD_{i,r}$ contributes 
\begin{equation} \label{e:Bdcoeff}
\frac{(\sum_{j \ge 1} d_j)!}{\prod_{j \ge 1} d_j!} e_{\sum_{j \ge 1} d_j}(a_{i,1}^p-a_{i,1},\dots,a_{i,p_i}^p-a_{i,p_i}).
\end{equation}
to the coefficient of $u^{-pr}$ in \eqref{e:Bprod}.

We note that our definition of $s$ is chosen precisely so that all sequences
$\bd = (d_0,d_1,d_2,\dots)$ of non-negative integers such that
$rp = \sum_{j \ge 1} d_j (jp-j+1)$, have $d_i = 0$ for $i > s$.
So the considerations above give all coefficients of $u^{-rp}$.
Therefore, the coefficient of $u^{-rp}$ in \eqref{e:Bprod} is the sum over all $\bd \in \bD_{i,r}$ of the terms
given in \eqref{e:Bdcoeff}, which proves the first claim of the lemma.

Now we observe that $(p_i-r, r, 0,\dots, 0) \in \bD_{i,r}$ is the unique element which maximises
$\sum_{j \ge 1} d_j$. It is easily verified that $(p_i-r, r, 0,\dots, 0) \in \bD_{i,r}$.
To see that $\sum_{j\ge 1} d_j$ is maximised we observe that for $\bd \in \bD_{i,r}$ we have
$pr = \sum_{j \ge 1} d_j (j(p-1) + 1) \ge p\sum_{j \ge 1} d_j$. We now show that this is
the unique element of $\bD_{i,r}$ with $\sum_{j \ge 1} d_j = r$. Let $\bd \in \bD_{i,r}$.
From the equation $\sum_{j\ge 1} d_j(j-1) = p\sum_{j\ge 1} d_j j - pr$ we deduce
that $p$ is a factor of $\sum_{j\ge 1} d_j(j-1)$, say $mp = \sum_{j\ge 1} d_j(j-1) = \sum_{j\ge 2} d_j(j-1)$.
Substituting back into $rp = \sum_{j \ge 1} d_j (jp-j+1)$ we have
\begin{eqnarray}
\label{e:kpequation}
mp &=& p \sum_{j \ge 1} j d_j - pr = p\left(\sum_{j \ge 1} (j-1) d_j + \sum_{j\ge 1} d_j - r\right) = p\left(mp + \sum_{j\ge 1} d_j - r\right)
\end{eqnarray}
Finally we arrive at $r = m(p-1) + \sum_{j\ge 1} d_j$, and we conclude that
if $r = \sum_{j\ge 1} d_j$ then $m = 0$, which forces $d_2 = d_3 = \cdots = d_s = 0$.
Using $\sum_{j \ge 0} d_j = p_i$ we deduce that
$\bd = (p_i - r, r, 0,\dots,0)$. We have now proven that claim
that $(p_i - r, r, 0, \dots, 0)$ uniquely maximises $\sum_{j\ge 1} d_j$ in $\bD_{i,r}$.

Since $(\sum_{j\ge 1} d_j)! / (\prod_{j \ge 1} d_j!) = 1$ for $\bd = (p_i - r, r, 0,\dots,0)$ it follows that for $i$ fixed
there is a upper unitriangular matrix $C = (c_{s,r})_{1 \le s,r \le p_i}$ such that
$$
\dB_i^{(rp)}v_{A,+} = \sum_{s\leq r} c_{s,r} e_s(a_{i,1}^p - a_{i,1},\dots, a_{i,p_i}^p - a_{i,p_i}).
$$
If we take $C^{-1} = (\tc_{s,r})_{1\leq s,r \leq n}$ and define $\hat B_i^{(rp)} = \sum_{s\leq r} \tc_{s,r} \dB_i^{(ps)}$
then the elements $\hat B_i^{(rp)}$ will act on $v_{A,+}$ in the manner claimed in the lemma.

To finish the proof, we are left to show that if $p > r$, then $\hat B_i^{(rp)} = \dB_i^{(rp)}$,
which will follow from showing that $\bD_{i,r} = \{(p_i - r, r, 0,\dots,0)\}$ under the assumption that $p > r$.
So suppose that $p > r$ and let $\bd \in \bD_{i,r}$. From equation \eqref{e:kpequation} we have
\begin{equation}
\label{e:givesacontra}
p\left(\sum_{j\ge 1}d_j + mp\right) = rp + mp
\end{equation}
Since $\sum_{j\ge 1}d_j  > 0$ we have $p(\sum_{j\ge 1}d_j + mp) > mp^2 = m(p-1)p + mp$. If $m > 0$,
then the hypothesis $p > r$ implies that $m(p-1) \ge r$ and combining with the previous inequality we
arrive at $p(\sum_{j\ge 1}d_j + mp) > rp + mp$, which contradicts \eqref{e:givesacontra}.
We conclude that $m = 0$ and, following the observations made after \eqref{e:kpequation},
we deduce that $\bd = (p_i - r, r, 0, \dots, 0)$. This completes the proof.
\end{proof}

Our next corollary implies that certain elements of $Z(\Ynls)$ are determined by their action on highest
weight vectors.  We need to set up some notation for its statement and proof.

Let $\Ynls_0$ be the subalgebra of $\Ynls$ of all elements fixed by the action of $T_n$.
The PBW basis \eqref{e:YnlsPBWbasis} is $T_n$-stable, and $\Ynls_0$ has a basis consisting of those monomials such
that $\sum_{(i,j,r) \in \bJ_F} u_{i,j}^{(r)} (\epsilon_i - \epsilon_j) = \sum_{(i,j,r) \in \bJ_E} v_{i,j}^{(r)} (\epsilon_i - \epsilon_j)$.
The subspace $\Ynls_{0,\sharp}$ of $\Ynls_0$ spanned by monomials with $\bu \neq 0$ is equal to the
subspace spanned by monomials with $\bv \neq 0$, and thus this subspace is an ideal.
Further, we have a direct sum decomposition $\Ynls_0 = \Ynls^0 \oplus \Ynls_{0,\sharp}$.
We define
$$
\zeta : \Ynls_0 \to \Ynls^0,
$$
to be the projection along this direct sum decomposition.

Recall the basis for $Z(\Ynls)$ given in \eqref{e:basisforthecentre}, and define $Z(\Ynls)^0$ to be the subspace of
of $Z(\Ynls)$ spanned by the monomials with $\bu = \bv = 0$. Clearly
$Z(\Ynls)^0 \subseteq \Ynls_0$.
We write $Z_p(\Ynls)^0$ for the subalgebra of $Z(\Ynls)$ which is
generated by $\{\hat B_i^{(rp)} \mid 1 \leq i \leq n, 0< r \leq p_i\}$; it is a polynomial
algebra on these generators thanks to Theorem~\ref{T:centreY}(b) and Lemma~\ref{L:Birpaction}. We
note that $Z(\Ynls)^0$ is not a subalgebra of $Z(\Ynls)$ 
but nonetheless, $Z(\Ynls)^0$ is a free $Z_p(\Ynls)^0$-module with basis given by the
restricted monomials given in \eqref{e:freebasiscentreTSY}.

\begin{Corollary} \label{C:ZY0inj}
$ $
\begin{enumerate}
\item[(a)] The restriction of $\zeta$ to $Z(\Ynls)^0$ is injective.
\item[(b)] Let $z \in Z(\Ynls)^0$.  Then $zv_{A,+} = 0$ for all
$A \in \Tab(\pi)$ if and only if $z = 0$.
\end{enumerate}
\end{Corollary}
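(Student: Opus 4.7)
The plan is to prove (a) and (b) simultaneously by first showing they are logically equivalent and then verifying (b) directly. For the equivalence, given $z \in \Ynls_0$, I would decompose $z = \zeta(z) + z'$ with $z' \in \Ynls_{0,\sharp}$. Each PBW summand of $z'$ contains a nontrivial $\dE$-factor and therefore annihilates $v_{A,+}$ by Lemma~\ref{L:thingsvanish}(a), so $z'v_{A,+}=0$. Meanwhile, $\zeta(z) \in \Ynls^0 = \kk[\dD_i^{(r)} : r \le p_i]$ acts on $v_{A,+}$ by the scalar obtained by substituting $\dD_i^{(r)} \mapsto a_i^{(r)}$. Since the $\lambda_A$-weight space of $M(\bar A)$ is one-dimensional (spanned by $v_{A,+}$, as seen from the PBW basis $\{\dF^\bu v_{A,+}\}$), it follows that $zv_{A,+} = \pi_A(\zeta(z))v_{A,+}$, where $\pi_A$ denotes evaluation at $(a_i^{(r)})$. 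As $A$ varies over $\Tab(\pi)$, the tuple $(a_i^{(r)})$ ranges over all of $\kk^N$, so by the Nullstellensatz applied to the algebraically closed field $\kk$,
\begin{equation*}
\zeta(z) = 0 \iff \pi_A(z) = 0 \ \forall A \iff zv_{A,+} = 0 \ \forall A,
\end{equation*}
confirming that (a) and (b) are equivalent.

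To establish (b), I would write $z = \sum_{\bt,\bw} \alpha_{\bt,\bw} \hat B^\bt \dZ^\bw$ in the basis of $Z(\Ynls)^0$ and combine Lemma~\ref{L:Birpaction} with \eqref{e:Zact} to compute $\pi_A(z)$. Setting $b_{i,j} := (i-1) + a_{i,j}$ (so that $a_{i,j}^p - a_{i,j} = b_{i,j}^p - b_{i,j}$, using $(i-1)^p = (i-1)$ in $\F_p$), we get
\begin{equation*}
\pi_A(z) = \sum_{\bt,\bw} \alpha_{\bt,\bw} \prod_{i,r} e_r(b_{i,1}^p - b_{i,1}, \ldots, b_{i,p_i}^p - b_{i,p_i})^{t_i^{(r)}} \prod_s e_s(b_{1,1}, \ldots, b_{n,p_n})^{w_s}.
\end{equation*}
Vanishing of this expression for every $A$ is equivalent, by algebraic closure of $\kk$, to the vanishing of the polynomial on the right in $\kk[b_{i,j}]$. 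So (b) reduces to the linear independence of the polynomials $\prod_{i,r} e_r(b_{i,1}^p - b_{i,1}, \ldots, b_{i,p_i}^p - b_{i,p_i})^{t_i^{(r)}} \prod_s e_s(b)^{w_s}$ in the row-symmetric subring $\Lambda := \kk[b_{i,j}]^{S_{p_1} \times \cdots \times S_{p_n}} = \kk[\sigma_{i,r}]$, where $\sigma_{i,r} := e_r(b_{i,1}, \ldots, b_{i,p_i})$.

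The main obstacle is this linear independence claim. I would prove it by establishing that $\Lambda$ is free of rank $p^N$ over the polynomial subring $\Lambda_B := \kk[e_r(b_{i,1}^p - b_{i,1}, \ldots, b_{i,p_i}^p - b_{i,p_i}) : i, r]$ with basis $\{\prod_s e_s(b)^{w_s} : \bw \in \{0,\ldots,p-1\}^N\}$. Filtering $\Lambda$ by total $b$-degree and using the Frobenius identity $e_r(x^p) = e_r(x)^p$ in characteristic $p$, the associated graded $\gr \Lambda_B$ identifies with $\Lambda^p := \kk[\sigma_{i,r}^p]$, over which $\Lambda$ is free of rank $p^N$. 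A standard filtered Nakayama lifting then reduces the problem to the corresponding statement with $\Lambda^p$ in place of $\Lambda_B$, namely that $\Lambda = \Lambda^p[e_1(b), \ldots, e_N(b)]$ with the $p^N$ monomials $\prod_s e_s(b)^{w_s}$ forming a free basis over $\Lambda^p$. Since $e_s(b)^p = e_s(b^p) \in \Lambda^p$, the subring $\Lambda^p[e_1(b), \ldots, e_N(b)]$ has rank at most $p^N$ over $\Lambda^p$, and establishing the desired equality (equivalently, the linear independence) is the key technical point, which I would verify via the Jacobian criterion. Concretely, differentiating the polynomial identity $\prod_{i=1}^n \bigl(\sum_{r=0}^{p_i} \sigma_{i,r} x^{p_i-r}\bigr) = \sum_{s=0}^N e_s(b) x^{N-s}$ with respect to each $\sigma_{i,r}$ yields the columns of the Jacobian matrix $(\partial e_s/\partial \sigma_{i,r})$; specializing $x = -b_{i_0,j_0}$ (which kills all columns with $i \ne i_0$ since the corresponding factor $P_{i_0}(-b_{i_0,j_0}) = 0$ drops out) isolates an $i_0$-block which is non-singular by a row-wise Vandermonde argument.
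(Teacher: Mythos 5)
Your overall strategy runs parallel to the paper's: both reduce the corollary to the $\kk$-linear independence of the products $\prod_{i,r} e_r(a_{i,1}^p-a_{i,1},\ldots,a_{i,p_i}^p-a_{i,p_i})^{t_i^{(r)}}\prod_s e_s(a)^{w_s}$, and both deduce this by lifting a statement about $p$-restricted monomials in $e_1,\ldots,e_N$ over $\kk[\sigma_{i,r}^p]$. Where you genuinely diverge is in the final step: the paper reduces to $\bp=(1^N)$ and then argues that $e_1,\ldots,e_N$ is a separating transcendence basis of $\kk(x_1,\ldots,x_N)$ (hence a $p$-basis, citing Bourbaki V.16.7.5 and V.13.2.1), whereas you compute the Jacobian $\bigl(\partial e_s/\partial\sigma_{i,r}\bigr)$ directly and show it is nonsingular via the Vandermonde specialization $x=-b_{i_0,j_0}$. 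Your computation is correct and is a legitimate, more elementary alternative.

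However, there is a real gap in your framing. You claim that $\Lambda=\kk[\sigma_{i,r}]$ is free of rank $p^N$ over $\Lambda_B$ with basis $\{\prod_s e_s(b)^{w_s}:\bw\in\{0,\ldots,p-1\}^N\}$, and consequently that $\Lambda=\Lambda^p[e_1(b),\ldots,e_N(b)]$. This is false. Comparing Hilbert series with respect to total $b$-degree: $\Lambda$ has series $\prod_{i,r}(1-t^r)^{-1}$, while a free $\Lambda_B$-module on that basis would have series $\prod_{i,r}(1-t^{pr})^{-1}\cdot\prod_{s=1}^N\frac{1-t^{ps}}{1-t^s}$, and these differ already for $\bp=(1,1)$, $p=2$. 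What is true -- and what the paper actually proves -- is that the $\Lambda_B$-submodule generated by $\{\prod_s e_s(b)^{w_s}\}$ (this is the image $S$ of $\omega$, not all of $\Lambda$) is free of rank $p^N$ over $\Lambda_B$. Since only linear independence over $\Lambda_B$ is needed, and since that follows from linear independence over $\kk[\sigma_{i,r}^p]$ by your filtration argument, the Jacobian computation still finishes the proof once you replace "$\Lambda$ is free over $\Lambda_B$" with "the $\Lambda_B$-span of the $e^\bw$ is free over $\Lambda_B$". As a separate minor point, the change of variables $b_{i,j}=(i-1)+a_{i,j}$ is spurious: by \eqref{e:Zact} the element $\dZ_r$ acts on $v_{A,+}$ by $e_r(a_{i,j})$ with no shift, so your displayed formula for $\pi_A(z)$ is not literally correct, though this has no effect on the subsequent argument since $b$ is then treated as a formal variable.
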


\begin{proof}
Thanks to Corollary~\ref{C:basisforthecentre} and Lemma~\ref{L:Birpaction} we know that
$Z(\Ynls)^0$ has a basis consisting of ordered monomials
\begin{equation} \label{e:basisY0}
\{\hat  B^\bt \dZ^\bw \mid \bt \in \bI_D, \bw \in \{0,\dots,p-1\}^N\}.
\end{equation}
Let $R = \kk[x_{i,j} \mid 1\leq i \leq n,\, 0 < j \leq p_i]$ be the polynomial ring in variables $x_{i,j}$.
We define a linear map $\omega : Z(\Ynls)^0 \to R$ by setting
\begin{align*}
\label{e:BgoestoR}
\omega(\dZ_r) &= e_r(x_{i,j} \mid 1\leq i \leq n, 0 < j \leq p_i)\\
\omega(\hat B_i^{(rp)}) &=  e_{r}(x_{i,1}^p-x_{i,1},\dots,x_{i,p_i}^p-x_{i,p_i})
\end{align*}
and then extending multiplicatively.

Thanks to \eqref{e:Zact} and Lemma~\ref{L:Birpaction} we know that the action of any element of $Z(\Ynls)^0$ on the Verma module $M(\bar A)$ is given by the composition $p_A \circ \omega$ where $p_A : R \to \kk$ is the homomorphism determined by $x_{i,j} \mapsto a_{i,j}$. In other words, for $z \in Z(\Ynls)^0$ and $A \in \Tab(\pi)$ we have $z v_{A,+} = (p_A \circ \omega(z)) v_{A, +}$. Since we have $z v_{A,+} = \zeta(z) v_{A,+}$ for every $z \in \Ynls_0$, and $\bigcap_{A \in \Tab(\pi)} \ker p_A = 0$, we conclude by Lemma~\ref{L:Dacting} that $\ker \zeta|_{Z(\Ynls)^0} = \ker \omega$. The rest of the proof is
devoted to showing that $\ker \omega = 0$, which implies both (a) and (b).

Let $S := \omega(Z(\Ynls)^0) \sub R$ and $S_p := \omega(Z_p(\Ynls)^0)$.
In order to show that $\omega$ is injective we show that it sends the basis of
$Z(\Ynls)^0$ given in \eqref{e:basisY0} to a basis of $S$.  To this end we show that $S_p$ is a
polynomial ring generated by $\{\omega(\hat B_i^{(rp)}) \mid 1\leq i \leq n, 0 < r \leq p_i\}$,
and that an $S_p$-basis is given by $\{\omega(\dZ^\bw) \mid \bw \in \{0,1,\dots,p-1\}^N\}$.

We place a filtration on $R$ with every $x_{i,j}$ in degree 1, and we have induced filtrations on $S$ and $S_p$.
We identify the associated graded space of $S$ with a subspace of $R$
and we see that $\gr_r e_r(x_{i,j} \mid 1\leq i \leq n, 0 < j \leq p_i) = e_r(x_{i,j} \mid 1\leq i \leq n, 0 < j \leq p_i)$ (as all the
monomials lie in filtered degree $r$), 
whereas $\gr_{pr} e_{r}(x_{i,1}^p-x_{i,1},\dots,x_{i,p_i}^p-x_{i,p_i}) = e_{r}(x_{i,1}^p,\dots,x_{i,p_i}^p) = e_{r}(x_{i,1},\dots,x_{i,p_i})^p$;
in particular, we observe that $\gr S$ is in fact a subalgebra of $R$.
Using Lemma~\ref{L:filtfree}
it suffices to show that the
$p$-restricted monomials in $\{e_r(x_{i,j} \mid 1\leq i \leq n, 0 < j \leq p_i) \mid r = 1,\dots,N\}$ form a basis for $\gr S$ over $\gr S_p$.

At this stage in the proof, we restrict to the case where $\bp = (1^N)$,
because the other cases follow from this case, whilst the notation in this case is more transparent.
Since $n = N$ and $p_1=\cdots = p_n = 1$ we use the notation $x_i$ instead of $x_{i,1}$ for $i=1,\dots,N$.
and write $e_1,\dots,e_N$ for the elementary symmetric polynomials in $x_1,\dots,x_N$.
The subalgebra $\gr S$ of $R$ is generated by
$\{x_i^p \mid i = 1,\dots,N\} \,\cup\, \{e_r \mid r = 1,\dots,N\}$, and the subalgebra
$\gr S_p$ is generated by $\{x_i^p \mid i = 1,\dots,N\}$.
The restricted monomials $e_1^{w_1} \cdots e_N^{w_N}$ with
$\bw \in \{0,\dots,p-1\}^N$ clearly generate $\gr S$ over $R^p$ so it suffices to show that they are linearly independent.
In turn it is enough to prove that $e_1^{w_1} \cdots e_N^{w_N}$
are linearly independent over the fraction field of $R^p$.

To achieve this we apply some field theory that can be found in \cite[Chapter V]{Bo}.
We write $\K = \kk(x_1,\dots,x_N)$ for the fraction field of $R$, and note that
the fraction field of $R^p$ is $\K^p$.
Next we observe that $\{e_1,\dots,e_N\}$ form a separating transcendence basis
of $\K$ over $\kk$ in the sense of \cite[Definition V.16.7.1]{Bo}.
Therefore, by \cite[Theorem V.16.7.5]{Bo}, we have that
$\{de_1,\dots,de_N\}$ form a $\K$-basis of the space $\Omega_\kk(\K)$ of $\kk$-derivations
of $\K$.  Since any $D \in \Omega_\kk(\K)$ annihilates $\K^p$, we have that $\Omega_{\K^p}(\K) = \Omega_\kk(\K)$,
so that $\{de_1,\dots,de_N\}$ is a $\K$-basis of $\Omega_{\K^p}(\K)$.  Then we can apply
\cite[Theorem V.13.2.1]{Bo} to deduce that
$\{e_1,\dots,e_N\}$ is a $p$-basis of $\K$ over $\K^p$, in the sense of
\cite[Definition V.13.1.1]{Bo}.   By definition of a $p$-basis we have
that the $p$-restricted monomials in $\{e_1,\dots,e_N\}$ are a basis of
$\K$ over $\K^p$, and thus in particular are linear independent as required.
\end{proof}

\subsection{Highest weight modules for $U(\g,e)$}
Through the isomorphism $\phi : \Ynls \to U(\g,e)$, which
we know is $T_n$-equivariant, we have a notion of highest weight modules
for $U(\g,e)$.  We use the notation and terminology introduced in \S\ref{ss:hwY}
also for $U(\g,e)$.
We are mainly interested in considering the restriction of highest weight
$U(\h)$-modules to $U(\g,e)$, and our main result is Lemma~\ref{L:DonUh}.
We move on to show that elements of $Z(\g,e)^0$ are determined by their action
on highest weight vectors in Corollary \ref{C:ZU0inj}.

We recall the good grading $\g = \bigoplus_{i \in \Z} \g(i)$ from \eqref{e:goodgrading}
and the notation $\h :=  \g(0)$ and $\p = \bigoplus_{i \ge 0} \g(i)$ from \eqref{e:phandm}.
We recall that the heights of the columns in $\pi$ are $q_1,\dots,q_l$, and so
$\h \cong \gl_{q_1}(\kk)  \oplus \cdots \oplus \gl_{q_l}(\kk)$.
We let $\b_\h$ be the Borel subalgebra of $\h$ with basis $\{e_{i,j} \mid \col(i)=\col(j),\, \row(i) \le \row(j) \}$,
which is the direct sum of the Borel subalgebras of upper triangular matrices in each of the $\gl_{q_i}(\kk)$.

For $A \in \Tab_\kk(\pi)$
we define the weight $\lambda_A \in \t^*$ by
\begin{equation*}
\lambda_A := \sum_{i=1}^N a_i \epsilon_i.
\end{equation*}
We let
$$
\rho_\h := - \sum_{i=1}^N (\row(i)-1) \epsilon_i,
$$
which is a ``shifted choice of $\rho$ for the Borel subalgebra $\b_\h$ of $\h$''.
Then we define
$$
\widetilde \rho =  \eta + \rho_\h,
$$
where we recall that $\eta$ is defined in \eqref{e:eta}.

We define $\kk_A$ to be the 1-dimensional
$\t$-module on which $\t$ acts via $\lambda_A-\widetilde{\rho}$, and view it
also as a module for $\b_\h$ on which the nilradical acts trivially.
Then we define the Verma module $M_\h(A) = U(\h) \otimes_{U(\b_\h)} \kk_{A}$ for $U(\h)$, and we
write $m_A := 1 \otimes 1_A$ for the highest weight vector.
We may view $M_\h(A)$ as a $U(\p)$-module on which the nilradical $\bigoplus_{i>0} \g(i)$ of $\p$
acts trivially, and then restrict it to $U(\g,e) \sub U(\p)$.  We write $\bar M_\h(A)$
for the restriction of $M_\h(A)$ to $U(\g,e)$, and write $\bar m_A$ for $m_A$ viewed
as an element of $\bar M_\h(A)$.

The following lemma shows that $\bar m_A$ is a highest weight vector in $\bar M_\h(A)$
with weight $\bar A$,
and further gives the action of $\xi_\p(\gr' D_i^{(r)})$ on
$\bar m_A$.
We note that a proof of (b) could be given based on the last paragraph
of the proof of \cite[Theorem~7.9]{BKrep}; however we give a more direct approach here,
which can also be used to prove (c).

\begin{Lemma} \label{L:DonUh}
Let $A \in \Tab_\kk(\pi)$ and let $\bar M_\h(A)$ and $\bar m_A$ be as defined above.
Then
\begin{enumerate}
\item[(a)] $E_{i,j}^{(r)} \bar m_A = 0$ for all $(i,j,r) \in \bJ_E$;
\item[(b)] $D_i^{(r)} \bar m_A = e_r(a_{i,1}+(i-1),\dots,a_{i,p_i}+(i-1))$ for all $(i,r) \in \bJ_D$; and
\item[(c)]
$\xi_\p(\gr' D_i^{(r)}) \bar m_A = e_r(a_{i,1}^p-a_{i,1},\dots,a_{i,p_i}^p-a_{i,p_i})$ for all $(i,r) \in \bJ_D$.
\end{enumerate}
\end{Lemma}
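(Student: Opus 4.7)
My plan is to handle (a) and (b) by reduction modulo $p$ from characteristic zero, and to handle (c) by a direct computation that factors the action of $\xi_\p(\gr_r'D_i^{(r)})$ through the projection to $U(\h)$ and then applies the $\F_p$-identity \eqref{e:Fppolya}.

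For parts (a) and (b), the elements $E_{i,j}^{(r)},D_i^{(r)}\in U(\p)$ are defined by the integral Brundan--Kleshchev formulas from \cite[Section~9]{BKshift}, so they lie in $U(\p_\Z)$. The characteristic-zero counterparts of (a) and (b), for the action on the Miura restriction of the complex Verma module $M_\h^\C(A)$ for $A$ with integer entries, follow from the computations in \cite[Section~7]{BKrep} (compare \cite[Lemma~7.2]{BKrep}). Both sides of the claimed identities are polynomial in the entries $a_{i,k}$, so the integral identities transfer verbatim to characteristic $p$ once tensored with $\kk$, giving (a) and (b) for arbitrary $A\in\Tab_\kk(\pi)$.

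For part (c), let $\nu\colon U(\p)\twoheadrightarrow U(\h)$ denote the surjection with kernel generated by the nilradical $\bigoplus_{k>0}\g(k)$. Since the $U(\p)$-action on $M_\h(A)$ factors through $\nu$, and the restriction $\nu|_\p\colon\p\to\h$ is a surjection of restricted Lie algebras, the identity $\nu(x^p-x^{[p]})=\nu(x)^p-\nu(x)^{[p]}$ on linear $x\in\p$ extends multiplicatively to give $\nu\circ\xi_\p=\xi_\h\circ\nu_S$ on $S(\p)^{(1)}$, where $\nu_S\colon S(\p)\to S(\h)$ is the induced symmetric algebra homomorphism. Hence
$$
\xi_\p(\gr_r'D_i^{(r)})\bar m_A \;=\; \xi_\h\bigl(\nu_S(\gr_r'D_i^{(r)})\bigr)m_A.
$$
Reading off $\nu_S(\gr_r'D_i^{(r)})\in S(\h)$ from the formula \eqref{e:Dir} for $D_i^{(r)}$ (after dropping all index tuples containing some $e_{h,k}$ with $\col(h)<\col(k)$, and using the description of $\gr_r'D_i^{(r)}$ given after Lemma~\ref{L:ZpUgegens}), the same combinatorial computation that evaluates $\nu(D_i^{(r)})m_A$ to $e_r(a_{i,1}+(i-1),\dots,a_{i,p_i}+(i-1))m_A$ in part (b) evaluates $\xi_\h(\nu_S(\gr_r'D_i^{(r)}))m_A$ to $e_r\bigl((a_{i,1}+(i-1))^p-(a_{i,1}+(i-1)),\dots\bigr)m_A$, since the linear factors $e_{b,b}-\widetilde\rho(e_{b,b})$ get replaced by the central elements $(e_{b,b}-\widetilde\rho(e_{b,b}))^p-(e_{b,b}-\widetilde\rho(e_{b,b}))$ under $\xi_\h$. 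Finally, because $\widetilde\rho(e_{b,b})\in\Z$ reduces to an element of $\F_p\subset\kk$, the $\F_p$-identity $(x+c)^p-(x+c)=x^p-x$ (a direct consequence of \eqref{e:Fppolya}) cancels the shift and leaves $e_r(a_{i,1}^p-a_{i,1},\dots,a_{i,p_i}^p-a_{i,p_i})$.

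The main obstacle will be justifying the parenthetical claim in (c): namely, that the combinatorial evaluation of $\nu_S(\gr_r'D_i^{(r)})$ under $\xi_\h$ on $m_A$ really reduces to the elementary symmetric polynomial in the Frobenius-corrected row-$i$ entries, rather than picking up unwanted contributions from off-diagonal matrix units of $\h$ that sit in the sum. The cleanest backup strategy, should this direct combinatorial argument prove delicate, is to work instead with the norm series $B_i(u):=D_i(u)D_i(u-1)\cdots D_i(u-p+1)\in U(\g,e)[[u^{-1}]]$: part (b) gives $(u^p-u)^{p_i}B_i(u)\bar m_A = \prod_{l=1}^{p_i}(u^p-u+a_{i,l}^p-a_{i,l})\bar m_A$ after the $\F_p$-collapse, and a Kazhdan-filtration comparison (using that $\xi_\p(\gr_r'D_i^{(r)})$ and the appropriate coefficient of $B_i(u)$ share the top symbol $(\gr_r'D_i^{(r)})^p$ and are both central) finishes the identification of the scalar acting on $\bar m_A$.
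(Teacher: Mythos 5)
Your treatment of parts (a) and (b) by reduction mod $p$ is a genuine alternative to the paper, which proves these two statements directly (the paper even notes that (b) could be deduced from the last paragraph of the proof of \cite[Theorem 7.9]{BKrep}). The trade-off is important, though: the paper's direct proof of (b) sets up a combinatorial analysis of the index tuples $\bi,\bj$ in \eqref{e:Dir} --- using conditions (a)--(f) to show only the diagonal terms with $\bi=\bj$ and strictly increasing $\col(i_t)$ survive --- and then reuses that infrastructure verbatim for (c), where it is in fact easier because the factors $\xi_\p(e_{i_t,j_t})$ are central. By proving (b) via reduction mod $p$, you have bypassed that combinatorics, so when you later appeal to ``the same combinatorial computation that evaluates $\nu(D_i^{(r)})m_A$ \dots\ in part (b)'', you are invoking a computation your proof of (b) never carried out.

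This is where the real gap lies, and you have already half-diagnosed it in your ``main obstacle'' paragraph. The factorization $\nu\circ\xi_\p=\xi_\h\circ\nu_S$ is correct. After $\nu_S$, the surviving monomials $e_{i_1,j_1}\cdots e_{i_r,j_r}$ all have $\col(i_t)=\col(j_t)$ but can still be off-diagonal inside $\h$. The missing step is: by conditions (e) and (f) of \eqref{e:Dir}, the sequence $\row(i_1)=i,\ \row(j_1)=\row(i_2),\ \dots,\ \row(j_r)=i$ is a closed walk, so if $\bi\neq\bj$ there is a $k$ with $\row(i_k)<\row(j_k)$, i.e.\ $i_k<j_k$; then $\xi_\h(e_{i_k,j_k})=e_{i_k,j_k}^p-e_{i_k,j_k}^{[p]}=e_{i_k,j_k}^p$ lies in $Z_p(\h)$, is central, and kills $m_A$ since $e_{i_k,j_k}$ lies in the nilradical of $\b_\h$. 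Centrality makes the reordering automatic (unlike in (b), where the paper needs conditions (c) and (e) to move $e_{i_k,j_k}$ past the later factors), so the whole summand dies. Only the diagonal $\bi=\bj$ terms survive, and the $\F_p$-identity handles the $\tilde\rho$-shift exactly as you describe. Stating this resolves (c); without it the proof is not complete.

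Your backup strategy is not sound as written. Knowing that $\xi_\p(\gr_r' D_i^{(r)})$ and an appropriate coefficient of $B_i(u)$ share the same top Kazhdan symbol and are both central only tells you their difference has strictly smaller Kazhdan degree; it does not force the difference to annihilate $\bar m_A$. Moreover, an identification of this kind between $\phi(\hat B_i^{(rp)})$ and $\xi_\p(\gr_r'D_i^{(r)})$ is essentially what the paper establishes \emph{later} in Lemma~\ref{L:EsandFs}(c), and that argument itself \emph{uses} Lemma~\ref{L:DonUh}(c); so pursuing the backup would risk circularity.
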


\begin{proof}
First we note that
$M_\h(A)$ is isomorphic as a $U(\p)$-module to $U(\p)/I_\p(A)$, where $I_\p(A)$ is the left ideal of $U(\p)$ generated by
$\{e_{i,j} - \delta_{i,j}(\lambda_A - \widetilde \rho)(e_{i,i}) \mid \col(i)= \col(j),\, \row(i) \le \row(j)\}\cup \{e_{i,j} \mid \col(i) > \col(j)\}$.
Next we observe that $T^e \iso T_n$ acts on $\p$ by the adjoint action, and this induces an action
of $T_n$ on $I_\p(A)$.  Using the same proof as  Lemma~\ref{L:definingVermas}(a) we see that any element of
$U(\p)$ with a positive $T_n$ weight annihilates $m_A$. Now part (a) follows as $E_i^{(r)} \in U(\g,e) \sub U(\p)$ has positive $T_n$-weight.

We move on to prove (b), where we use the explicit formula for $D_i^{(r)}$ given in \eqref{e:Dir}.
We set up some notation to simplify the proof. The formula \eqref{e:Dir} is given as a sum of terms
indexed by integers $1 \leq i_1,...,i_s, j_1,...,j_s \leq N$ subject to conditions (a)--(f).
We write $\bi = (i_1,...,i_s), \bj = (j_1,...,j_s)$ and $\te_{\bi,\bj}$ for the summand corresponding to $\bi, \bj$.

First we observe that if $s < r$, then condition (a) ensures that $\col(j_k) > \col(i_k)$ for some $k$, which
implies that $\te_{\bi, \bj}$ kills $m_A$.

Now we consider sequences $\bi, \bj$ with $s=r$.  Then we have $\col(i_k) = \col(j_k)$ for all $k$,
so that $\te_{\bi,\bj} \in U(\h)$.  Suppose that $i_k \ne j_k$ for all $k$.
Using conditions (d), (e) and (f) we see that there is some $k$ such that $i_k < j_k$,
and we choose the maximal such $k$.
We certainly have that $\te_{i_k,j_k} = e_{i_k,j_k}$ kills $m_A$.  Further by
condition (c) and (e), we have $\col(i_{m}) > \col(i_k)$ for all $m > k$, so
that $e_{i_k,j_k}$ commutes with $\te_{i_m,j_m}$.  We deduce
$\te_{\bi,\bj}$ kills $m_A$.

Hence, we see that the only summands $\te_{\bi, \bj}$ in $D_i^{(r)}$ which do not
kill $m_A$ correspond to sequences $\bi = \bj = (i_1,...,i_r)$, where $\row(i_k) = i$ for all $k$, and $i_1 < i_2 < \cdots < i_r$.
We have $(\lambda_A - \tilde\rho)(\te_{k,k}) = (\lambda_A - \rho_\h)(e_{k,k}) = a_k + (\row(k) - 1)$ for $k=1,...,N$ and it follows that
$D_i^{(r)}$ acts on $\bar m_A$ by
$$
\sum_{\substack{i_1 < \cdots < i_r\\ \row(i_k) = i}}
(a_{i_1} + (i - 1))\cdots (a_{i_r} + (i - 1)) = e_r(a_1 + (i-1), \dots ,a_{p_i} - (i-1)).
$$

To prove (c), we can argue exactly as above and use the formula for $\xi_\p(D_i^{(r)})$
given in \eqref{e:explicitDpcentre}; in fact the argument is easier as the monomials in the 
expression for $\xi_\p(D_i^{(r)})$ consist of commuting terms.  This shows that $\xi_\p(\gr' D_i^{(r)})$ acts on
$\bar m_A$ via
$$
(\lambda_A-\tilde \rho)
\left( \sum_{\substack{i_1 < \cdots < i_r\\ \row(i_k) = i}}   (e_{i_1, i_1}^p - e_{i_1, i_1}) \cdots (e_{i_r, i_r}^p - e_{i_r, i_r})\right).
$$
We have that $\lambda_A(e_{i_k, i_k}^p - e_{i_k, i_k}) = a_{i,k}^p-a_{i,k}$ whilst
$\tilde \rho(e_{i_k, i_k}^p - e_{i_k, i_k}) = \tilde \rho(e_{i_k, i_k})^p - \tilde \rho(e_{i_k, i_k}) = 0$.
Hence, $\xi_\p(\gr' D_i^{(r)})$ acts on $\bar m_A$ via $e_r(a_{i,1}^p-a_{i,1},\dots,a_{i,p_i}^p-a_{i,p_i})$ as required.
\end{proof}

To end the subsection, we record a version of Corollary~\ref{C:ZY0inj}(b) for the algebra $U(\g,e)$. 
We define $Z(\g,e)^0$ to be the subspace
of $Z(\g,e)$ which is spanned by the PBW monomials appearing in \eqref{e:Zgebasis} such that $\bu = \bv = 0$.

\begin{Corollary} \label{C:ZU0inj}
Let $z \in Z(\g,e)^0$.  Then $zv_{A,+} = 0$ for all
$A \in \Tab(\pi)$ if and only if $z = 0$.
\end{Corollary}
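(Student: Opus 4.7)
The plan is to reduce to Corollary~\ref{C:ZY0inj}(b) via the isomorphism $\phi$. The first step is to observe that, by parts~(a) and~(b) of Lemma~\ref{L:DonUh} together with \eqref{e:Dact}, the vector $\bar m_A \in \bar M_\h(A)$ is a highest weight vector of weight $\bar A$ for $U(\g,e)$. Since $\phi$ is $T_n$-equivariant, viewing $\bar M_\h(A)$ as a $\Ynls$-module through $\phi$ makes $\bar m_A$ a highest weight vector of weight $\bar A$ for $\Ynls$; consequently, any central element of $\Ynls$ acts on $\bar m_A$ by the same scalar as it does on $v_{A,+} \in M(\bar A)$. Combining Corollary~\ref{C:HClineup} with \eqref{e:Zact}, this gives $Z_r \bar m_A = e_r(a_{i,j} \mid 1 \le i \le n, 1 \le j \le p_i) \bar m_A$.

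For the second step I will set up a $\kk$-linear bijection $\Phi : Z(\g,e)^0 \to Z(\Ynls)^0$ which intertwines the scalar actions on highest weight vectors. By Corollary~\ref{C:basisforthecentre} together with the unitriangular change of variables from $\dB^\bt$ to $\hat B^\bt$ produced in Lemma~\ref{L:Birpaction}, the space $Z(\Ynls)^0$ admits the basis $\{\hat B^\bt \dZ^\bw \mid (\bt,\bw) \in \bI_D \times \{0,\dots,p-1\}^N\}$, while \eqref{e:Zgebasis} gives $Z(\g,e)^0$ the basis $\{\xi_\p(\gr' D)^\bt Z^\bw\}$ indexed by the same set. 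I define $\Phi$ by declaring $\Phi(\xi_\p(\gr' D)^\bt Z^\bw) := \hat B^\bt \dZ^\bw$ on these matching bases.

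The critical point is that $\Phi$ is engineered precisely so that Lemma~\ref{L:Birpaction} and Lemma~\ref{L:DonUh}(c) produce the identical scalar $e_r(a_{i,1}^p - a_{i,1}, \dots, a_{i,p_i}^p - a_{i,p_i})$ for the actions of $\hat B_i^{(rp)}$ on $v_{A,+}$ and of $\xi_\p(\gr' D_i^{(r)})$ on $\bar m_A$, and by the first step the generators $\dZ_r$ and $Z_r$ act by the same scalars as well. It follows that for any $z \in Z(\g,e)^0$ and any $A \in \Tab(\pi)$, the element $\Phi(z)$ acts on $v_{A,+}$ by the same scalar as $z$ acts on $\bar m_A$.

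The conclusion is then immediate: given $z \in Z(\g,e)^0$ with $z \bar m_A = 0$ for every $A$, we have $\Phi(z) v_{A,+} = 0$ for every $A$, so Corollary~\ref{C:ZY0inj}(b) applied to $\Phi(z) \in Z(\Ynls)^0$ yields $\Phi(z) = 0$, and hence $z = 0$ by bijectivity of $\Phi$. I do not anticipate a genuine obstacle: the essential work is absorbed into Corollary~\ref{C:ZY0inj}(b) and the two action computations in Lemmas~\ref{L:Birpaction} and~\ref{L:DonUh}(c), whose scalars were designed to match, so the present corollary really amounts to transporting the result through $\phi$.
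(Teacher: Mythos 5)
Your proof is correct and is essentially the paper's argument, just phrased more explicitly: where the paper writes ``apply precisely the same argument as for Corollary~\ref{C:ZY0inj}(b),'' you package that rerun of the $\omega$-injectivity into a $\kk$-linear bijection $\Phi : Z(\g,e)^0 \to Z(\Ynls)^0$ matching the two PBW bases and invoke Corollary~\ref{C:ZY0inj}(b) as a black box. The scalar computations you supply are exactly the ones the paper uses (Lemma~\ref{L:DonUh}(c) for $\xi_\p(\gr' D_i^{(r)})$ on $\bar m_A$, and \eqref{e:Zact} for the $\dZ_r$), and your transport across $\phi$ via the universality of $M(\bar A)$ is sound because $\bar m_A$ is a highest weight vector of weight $\bar A$. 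One small remark on citation: the identity $Z_r = \phi(\dZ_r)$ that you need to match the $Z_r$-action on $\bar m_A$ with the $\dZ_r$-action on $v_{A,+}$ is Lemma~\ref{L:HClemmaUg2} (Corollary~\ref{C:HClineup} only records the resulting isomorphism of Harish--Chandra centres, not the precise generator-by-generator identification), so it would be cleaner to cite the lemma directly.
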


\begin{proof}
It follows from Lemma \ref{L:HClemmaUg2}, along with \eqref{e:Zact}, that $Z_r \in Z_{\HC}(\g,e)$ acts on $\bar m_A$ via the $r$th elementary symmetric function in $\{a_{i,j} \mid 1\leq i\leq n, 1\leq j \leq p_i\}$.
Also by Lemma~\ref{L:DonUh}(c), we know that $\xi_\p(\gr' D_i^{(r)}) \in Z_p(\g,e)$ acts on $\bar m_A$ by the $r$th elementary symmetric polynomial in
$\{a_{i,1}^p - a_{i,1},\dots,a_{i,p_i}^p - a_{i,p_i}\}$.
Therefore, we may apply precisely the same argument as for Corollary~\ref{C:ZY0inj}(b) to complete the proof.
\end{proof}

\section{The isomorphism of restricted versions}
\label{S:mainproof}

The main goal of this section is to prove Theorem~\ref{T:restricted}.  We continue to use
that notation introduced in Sections \ref{S:YandW}--\ref{S:hwY}.

\begin{Lemma} \label{L:EsandFs}
$ $
\begin{enumerate}
\item[(a)] $\phi((\dE_{i,j}^{(r)})^p)\in Z_p(\g,e)_+Z(\g,e)$.
\item[(b)]  $\phi((\dF_{i,j}^{(s)})^p) \in Z_p(\g,e)_+Z(\g,e)$.
\item[(c)] $\phi(\hat B_i^{(rp)}) - \xi_\p(\gr' D_i^{(r)}) \in Z_p(\g,e)_+Z(\g,e)$.
\end{enumerate}
\end{Lemma}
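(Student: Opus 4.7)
My plan for parts (a) and (b) is a direct $T_n$-weight computation. Since $\phi$ is $T_n$-equivariant (see \S\ref{ss:torusactions}) and $(\dE_{i,j}^{(r)})^p$ is a $T_n$-weight vector of weight $p(\epsilon_i - \epsilon_j) \ne 0$, its image $\phi((\dE_{i,j}^{(r)})^p)$ lies in that non-zero weight space of $Z(\g,e)$. I would expand in the basis \eqref{e:Zgebasis}: since $\xi_\p(\gr' D_i^{(r)})$ and $Z_r$ are $T_n$-invariant, every basis monomial that contributes must contain at least one factor from $\{\xi_\p(\gr' E_{i,j}^{(r)})\}\cup\{\xi_\p(\gr' F_{i,j}^{(r)})\}\sub Z_p(\g,e)_+$, placing the whole image in $Z_p(\g,e)_+ Z(\g,e)$. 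The proof of (b) is identical with the roles of $E$ and $F$ swapped.

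For (c), I would set $z := \phi(\hat B_i^{(rp)}) - \xi_\p(\gr' D_i^{(r)}) \in Z(\g,e)$. By Lemmas~\ref{L:Birpaction} and \ref{L:DonUh}(c) both summands act on $\bar m_A$ by the same scalar $e_r(a_{i,1}^p - a_{i,1},\dots,a_{i,p_i}^p - a_{i,p_i})$, so $z\bar m_A = 0$ for every $A \in \Tab_\kk(\pi)$. The key additional input is the vanishing $\xi_\p(\gr' E_{i,j}^{(r)})\bar m_A = 0$ for every $A$ and every $i<j$. To prove this I would combine the PBW theorem for $U(\g,e)$ (Theorem~\ref{T:isoandPBW}(c)) with Lemma~\ref{L:thingsvanish}(a) and Lemma~\ref{L:DonUh}(b) to show that the submodule $U(\g,e)\cdot\bar m_A$ is spanned by the vectors $F^\bu \bar m_A$ and therefore every $T_n$-weight occurring in it is $\le$ the weight $\mu_A$ of $\bar m_A$; since $\xi_\p(\gr' E_{i,j}^{(r)})\bar m_A$ belongs to this submodule but carries the strictly larger $T_n$-weight $\mu_A + p(\epsilon_i-\epsilon_j)$, it must vanish.

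I would then decompose $z$ in the basis \eqref{e:Zgebasis} as $z = z_0 + z_1$, where $z_0 \in Z(\g,e)^0$ collects the monomials with $\bu = \bv = 0$ and $z_1$ is the remainder. Each basis monomial in $z_1$ is $T_n$-invariant (as $z$ itself is) with $\bv \ne 0$ (a pure product of $\xi_\p(\gr' F)$-factors would have strictly negative weight), and using centrality I push one of its $\xi_\p(\gr' E_{i,j}^{(r)})$-factors to the right, where it annihilates $\bar m_A$, giving $z_1\bar m_A = 0$. Combined with $z\bar m_A = 0$ this yields $z_0\bar m_A = 0$ for every $A$, so Corollary~\ref{C:ZU0inj} forces $z_0 = 0$. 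Hence $z = z_1$, and since every monomial in $z_1$ contains a factor from $Z_p(\g,e)_+$, we conclude $z \in Z_p(\g,e)_+ Z(\g,e)$, proving (c). The most delicate step is the vanishing of $\xi_\p(\gr' E_{i,j}^{(r)})\bar m_A$, where the highest weight theory of Section~\ref{S:hwY} does the real work.
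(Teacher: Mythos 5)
Your proof is correct and follows essentially the same route as the paper's. Parts (a) and (b) are handled by exactly the weight argument the paper uses, and for (c) you mirror the paper's decomposition $z = z_0 + z_1$ into the $\bu = \bv = 0$ part $z_0 \in Z(\g,e)^0$ and the remainder $z_1 \in Z_p(\g,e)_+Z(\g,e)$, then apply Corollary~\ref{C:ZU0inj} to kill $z_0$.

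Where your write-up goes beyond the paper's is in making explicit the step that Corollary~\ref{C:ZU0inj} actually requires, namely $z_0\bar m_A = 0$ rather than merely $z\bar m_A = 0$. You correctly observe that this hinges on $z_1\bar m_A = 0$, which in turn reduces (via $T_n$-invariance forcing $\bv \neq 0$ and centrality) to showing $\xi_\p(\gr' E_{i,j}^{(r)})\bar m_A = 0$. The paper leaves this implicit. Your argument for this last vanishing — realizing $U(\g,e)\cdot\bar m_A$ as a quotient of the Verma module $M(\bar A)$ via Lemmas~\ref{L:thingsvanish} and \ref{L:DonUh}, so its $T_n$-weights lie below that of $\bar m_A$, forcing a strictly higher-weight vector to vanish — is valid, but it is somewhat more elaborate than needed: the observation embedded in the proof of Lemma~\ref{L:DonUh}(a), that any element of $U(\p)$ of positive $T_n$-weight annihilates $m_A$, gives the vanishing directly since $\xi_\p(\gr' E_{i,j}^{(r)})$ lies in $U(\p)$ and carries weight $p(\epsilon_i - \epsilon_j)$. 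Two minor points: Theorem~\ref{T:isoandPBW} has parts (a) and (b) only, so the reference to part (c) should presumably be to part (b); and you assert that $z$ is $T_n$-invariant without a word of justification (the paper verifies this by noting $\dB_i^{(rp)}$ is built from the $T_n$-fixed $D_j^{(s)}$'s and that $\xi_\p$ is $T_n$-equivariant), though this is a routine observation.
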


\begin{proof}
Recall from \textsection\ref{ss:restrictedW} that $\xi_\p(\gr' E_{i,j}^{(r)}{}^p), \xi_\p(\gr' F_{i,j}^{(r)}{}^p) \in Z_p(\g,e)_+$.
The basis elements of $Z(\g,e)$ in \eqref{e:Zgebasis} with nonzero weight
have $\bu \ne 0$ or $\bv \ne 0$, so that these elements lie in $Z_p(\g,e)_+Z(\g,e)$.
Now (a) and (b) follow from the facts that $\dE_{i,j}^{(r)}$ has $T_n$-weight
$p(\epsilon_i - \epsilon_j)$ and $\dF_{i,j}^{(r)}$ has $T_n$-weight $p(\epsilon_j - \epsilon_i)$
along with the fact that $\phi$ is $T_n$-equivariant.

By Lemmas~\ref{L:Birpaction} and \ref{L:DonUh} we know that $\phi(\hat B_i^{(rp)}) - \xi_\p(\gr' D_i^{(r)})$ acts trivially on all highest weight vectors $\bar m_A$ for $U(\g,e)$. It follows from the definition of $B_i(u)$ that $\dB_i^{(rp)}$
is fixed by $T_n$, thus $\phi(\hat B_i^{(rp)})$
is also fixed.
Similarly, $\xi_\p(\gr' D_i^{(r)})$ is centralized by $T_n$, because $D_i^{(r)}$ is, and $\xi_\p$ is $T_n$-equivariant.
Therefore, $\phi(\hat B_i^{(rp)}) - \xi_\p(\gr' D_i^{(r)})$.
Now writing $\phi(\hat B_i^{(rp)}) - \xi_\p(\gr' D_i^{(r)})$ as a sum of the basis elements of $Z(\g,e)$ given in \eqref{e:Zgebasis} we
deduce that $\phi(\hat B_i^{(rp)}) - \xi_\p(\gr' D_i^{(r)})$ is a span of elements with
$\bu = \bv = 0$ modulo terms lying in $Z_p(\g,e)_+Z(\g,e)$.
We may now apply Corollary~\ref{C:ZU0inj} to deduce
that $\phi(\tB_i^{(rp)}) - \xi_\p(\gr' D_i^{(r)}) \in Z_p(\g,e)_+Z(\g,e)$ as required.
\end{proof}

We are now ready to deduce our main theorem.

\begin{proof}[Proof of Theorem \ref{T:restricted}]
Using Lemma~\ref{L:EsandFs} along with the fact that $\xi_\p(\gr' D_i^{(r)}) \in Z_p(\g,e)_+$ we know that $\phi$ maps $Z_p(\Ynls)_+$ to $Z_p(\g,e)_+Z(\g,e)$, and it follows immediately that
$\phi(\Ynls Z_p(\Ynls)_+) \subseteq U(\g,e)Z_p(\g,e)_+$.  We conclude that $\phi$ induces a surjective map
$\phi^{[p]} : \Ynlps \to U^{[p]}(\g,e)$.  Moreover, $\dim \Ynlps = p^{\dim \g^e} = \dim U^{[p]}(\g,e)$
by considering the bases given in \eqref{e:PBWrestY} and \eqref{e:PBWrestW}.  Hence $\phi^{[p]}$ is an isomorphism.
\end{proof}

\end{document}